\numberwithin{equation}{section}
\theoremstyle{plain}
\def\:{:\,}\numberwithin{equation}{section}
\newtheorem{theorem}{Theorem}[section]
\newtheorem{lemma}[theorem]{Lemma}
\newtheorem{definition}[theorem]{Definition}
\newtheorem{remark}[theorem]{Remark}
\newtheorem{example}[theorem]{Example}
\def\beqn{\begin{equation}}
\def\beqn*{$$}
\def\eeqn{\end{equation}}
\def\eeqn*{$$}
\def\P{\mathbb{P}}
\def\E{\mathbb{E}}
\newcommand{\beq}{\begin{eqnarray}}
\newcommand{\eeq}{\end{eqnarray}}
\newcommand{\beqq}{\begin{eqnarray*}}
\newcommand{\eeqq}{\end{eqnarray*}}
\newcommand{\bx}{{\bf x}}
\newcommand{\by}{{\bf y}}
\newcommand{\bi}{{\bf i}}
\newcommand{\bj}{{\bf j}}
\newcommand{\reals}{{\mathbb R}}
\newcommand{\bbr}{\reals}
\newcommand{\one}{{\bf 1}}
\def\indic{\one}
\newcommand{\psiinv}{\psi^{\leftarrow}}
\def\definedas{\stackrel{\Delta}{=}}
\def\X{\mathcal X}
\def\Cech{\v{C}ech\ }
\def\real{\mathbb R}
\def\cM{\mathcal M}
\def\bi{\mathbf i}
\def\Xi{\X_\bi}
\def\IPk{\mathcal I_{|\mathcal P_n|, k}}
\def\IP2{\mathcal I_{|\mathcal P_n|, 2}}
\def\Pn{\mathcal P_n}
\def\Xsupn{\X^{(n)}}
\def\Xsupnk{\X^{(n)}_k}
\def\dkn{d_{k,n}}
\def\ckn{c_{k,n}}
\def\tildeN{\widetilde{N}}
\def\regN{N}
\begin{document}

\begin{frontmatter}
\title{Limit Theorems for Point Processes under Geometric Constraints (and Topological Crackle)}
\runtitle{Point processes under geometric constraints}

\begin{aug}

\author{\fnms{Takashi} \snm{Owada} \thanksref{t4}
  \ead[label=e2]{takashiowada@ee.technion.ac.il}
\ead[label=u2,url]{} }
\and
\author{\fnms{Robert J.} \snm{Adler}\thanksref{t1,t4}
\ead[label=e1]{robert@ee.technion.ac.il}
\ead[label=u1,url]{webee.technion.ac.il/people/adler}}

\thankstext{t1}{Research supported in part by SATA, AFOSR Award FA9550-15-1-0032.}
\thankstext{t4}{Research supported in part  by  URSAT, ERC Advanced Grant 320422.}
\runauthor{Owada and Adler}
\affiliation{Technion -- Israel Institute of Technology}

\address{Takashi Owada\\Electrical Engineering\\
 Technion, Haifa, Israel 32000\\
\printead{e2}\\
}

\address{Robert Adler\\Electrical Engineering\\
 Technion, Haifa, Israel 32000\\
\printead{e1}\\
\printead{u1}}

\end{aug}

\begin{abstract}
We study the asymptotic nature of geometric structures formed from a point cloud of  observations of (generally heavy tailed) distributions in a Euclidean space of dimension greater than one. A typical  example is given by the Betti numbers of \v{C}ech complexes built over the cloud. The structure of dependence and  sparcity (away from the origin) generated by these distributions leads to limit laws expressible via 
non-homogeneous, random, Poisson measures. The parametrisation of the limits depends on both the tail decay rate of the observations and the particular geometric constraint being considered. 

The main theorems of the paper generate a new class of results in the well established  theory of extreme values, while their applications are of significance for the fledgling area of rigorous results in topological data analysis. In particular, they provide a broad theory for the empirically well-known  phenomenon of homological `crackle';  the continued presence of spurious homology in samples of topological structures, despite increased sample size.

\end{abstract}

\begin{keyword}[class=MSC]
\kwd[Primary ]{60G70}
\kwd{60K35}
\kwd[; secondary ]{60G55, 60D05, 60F05, 55U10.}
\end{keyword}

\begin{keyword}
\kwd{Point process, Poisson random measure, extreme value theory, regular variation, \v{C}ech complex, Betti number, topological data analysis, crackle, geometric graph.}
\end{keyword}

\end{frontmatter}

\section{Introduction} \label{sec:intro}

The main results in this paper lie in two seemingly unrelated areas, those of classical Extreme Value Theory (EVT), and the fledgling area of rigorous results in Topological Data Analysis (TDA).  

The bulk of the paper, including its main theorems, are in the domain of EVT, with many of the proofs coming from Geometric Graph Theory. However, the consequences for TDA, which will not appear in detail until the penultimate section of the paper,  actually provided our initial motivation, and so we shall start with a brief description of this 
aspect of the paper.

Many problems in TDA start with a `point cloud', a collection $\mathcal{X}=\{x_1,\dots,x_n\}$ of points in $\bbr^d$, from which more complex sets are constructed. Two simple examples are the simple union of balls
$$
U(\mathcal X,r) \ \definedas \  \bigcup_{k=1}^n B(x_k; r),
$$
where $B(x; r)$ is a closed ball of radius $r$ about the point $x$, and the  \textit{\v{C}ech complex}, 
$\check{C}(\mathcal{X},r)$.
\begin{definition}
\label{cech:defn}
Let $\mathcal{X}$ be a collection of points in $\bbr^d$ and $r$ be a positive number. Then the \v{C}ech complex $\check{C}(\mathcal{X},r)$ is defined as follows. 
\begin{enumerate}
\item The $0$-simplices are the points in $\mathcal{X}$. 
\item A $p$-simplex $\sigma=[x_{i_0}, \dots, x_{i_p}]$ belongs to $\check{C}(\mathcal{X},r)$ whenever a family of closed balls $\bigl\{ B(x_{i_j}; r/2), \, j=0,\dots,p \bigr\}$ has a nonempty intersection. 
\end{enumerate}
\end{definition}

\v{C}ech complexes are higher-dimensional analogues of \textit{geometric graphs}, a notion more familiar to probabilists.
\begin{definition}  \label{def.geometric.graph}
Given a finite set $\mathcal{X} \subset \bbr^d$ and a real number $r>0$, the geometric graph $G(\mathcal{X},r)$ is the undirected graph with vertex set $\mathcal{X}$ and  edges $[x,y]$ for all pairs $x,y\in\X$ for which $\|x-y\| \leq r$. 
\end{definition}
For a given a \v{C}ech complex, it is immediate from the definitions that its $1$-skeleton is actually a geometric graph. Examples of both are given in Figure \ref{f:cech.geometric}.

\begin{figure}[!htb]
  \label{f:cech.geometric}
\includegraphics[height=4.5cm,width=11cm]{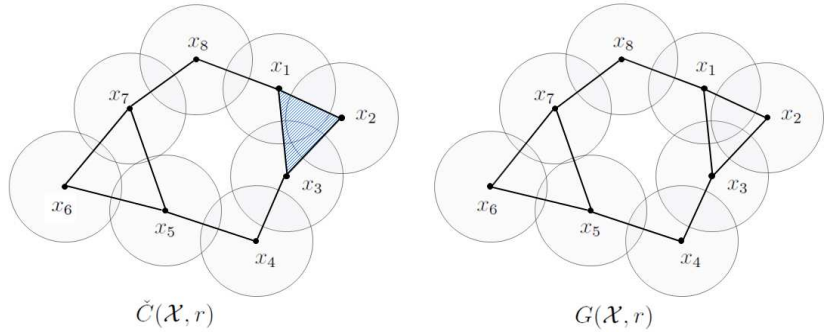}
\caption{Take $\mathcal{X} = \{x_1,\dots,x_8\}\subset \bbr^2$. Note that the $2$-simplex $[x_1,x_2,x_3]$ belongs to $\check{C}(\mathcal{X},r)$, since the three balls with radius $r$ centred at $x_1,x_2,x_3$ have a common intersection. The Betti numbers of $\check{C}(\mathcal{X},r)$ are $\beta_0=1$ (one connected component), $\beta_1=2$ (two closed loops), while all  others are zero. The geometric graph $G(\mathcal{X}, r)$  shows the $1$-skeleton of $\check{C}(\mathcal{X},r)$. }
\end{figure}

A typical TDA paradigm is to create either $U(\X,r)$ or $\check{C}(\X,r)$ as an estimate of some underlying 
sub-manifold, $\mathcal M\subset \real^d$, from which $\X$ is actually sampled, and then consider its homology, typically via what is known as persistent homology (with which we shall not concern ourselves in this paper) or through Betti numbers.

We shall concern ourselves primarily  with Betti numbers, a basic quantifier in Algebraic Topology. Roughly speaking, given a topological space $X$, the $0$-th Betti number $\beta_0(X)$ counts the number of its connected components, while for $k \geq 1$, the $k$-th Betti number $\beta_k(X)$ counts the number of  $k$-dimensional `holes' or `cycles' in $X$. For example, a one-dimensional sphere -- i.e.\ a circle -- has $\beta_0=1$, $\beta_1=1$,  and $\beta_k=0$ for all $k\geq 2$. A two-dimensional sphere has $\beta_0=1$, $\beta_1=0$, and $\beta_2=1$,  and all others  zero. In the case of a two-dimensional torus, the non-zero Betti numbers are  $\beta_0=1$, $\beta_1=2$, and $\beta_2=1$. At a more formal level, $\beta_k(X)$ can be  defined as the dimension of the \textit{$k$-th homology group}, however the essence of this paper can be captured without  knowledge of homology theory. In the sequel, simply viewing $\beta_k(X)$ as the number of $k$-dimensional holes will suffice. The readers who are interested in a thorough coverage of homology theory may refer to \cite{hatcher:2002} or \cite{vick:1994}.

Returning to the two sets  $U(\X, r/2)$ and $\check{C}(\X,r)$,  by a classical result known as the   Nerve Theorem (\cite{borsuk:1948})  they are homotopy equivalent, and so from the point of view of TDA they are conceptually equivalent. However, since the definition of the  \Cech complex  is essentially combinatorial, it is computationally more accessible, and so of more use in applications. Hence we shall concentrate on it from now on.

There is now a substantial literature, with  \cite{niyogi:smale:weinberger:2008,niyogi:smale:weinberger:2011}  being the papers that motivated us, that shows that, given  a nice enough $\cM$, and any $\delta>0$,  there are explicit conditions on $n$ and $r$ such that the homologies of $U$ and $\check{C}$ are equal to the homology of $\cM$ with a probability of at least $(1-\delta)$.  
Typically, these results hold when the sample $\X$ is either taken from $\cM$ itself, or from $\cM$ with small (e.g.\ Gaussian)  random,  perturbative error. However, if the error is allowed to become large (e.g.\ has a heavy tailed distribution) then these results fail, and a phenomenon known as {\it crackle} occurs. An example is given in 
Figure 2, %\ref{fig:mfld}, 
taken from  \cite{adler:bobrowski:weinberger:2014},
%\cite{bobrowski:adler:2014},
 in which the underlying $\cM$ (a stratified manifold in this case) is an annulus in $\real^2$. From the point of view of TDA, the main aim of the current paper is to derive rigorous results describing the distribution of the  `extraneous homology elements' appearing in Figure 2(d).

\begin{figure}[htb!]
\label{fig:mfld}
\centering
\subfigure[]
{
  \centering
    {\includegraphics[scale=0.2]{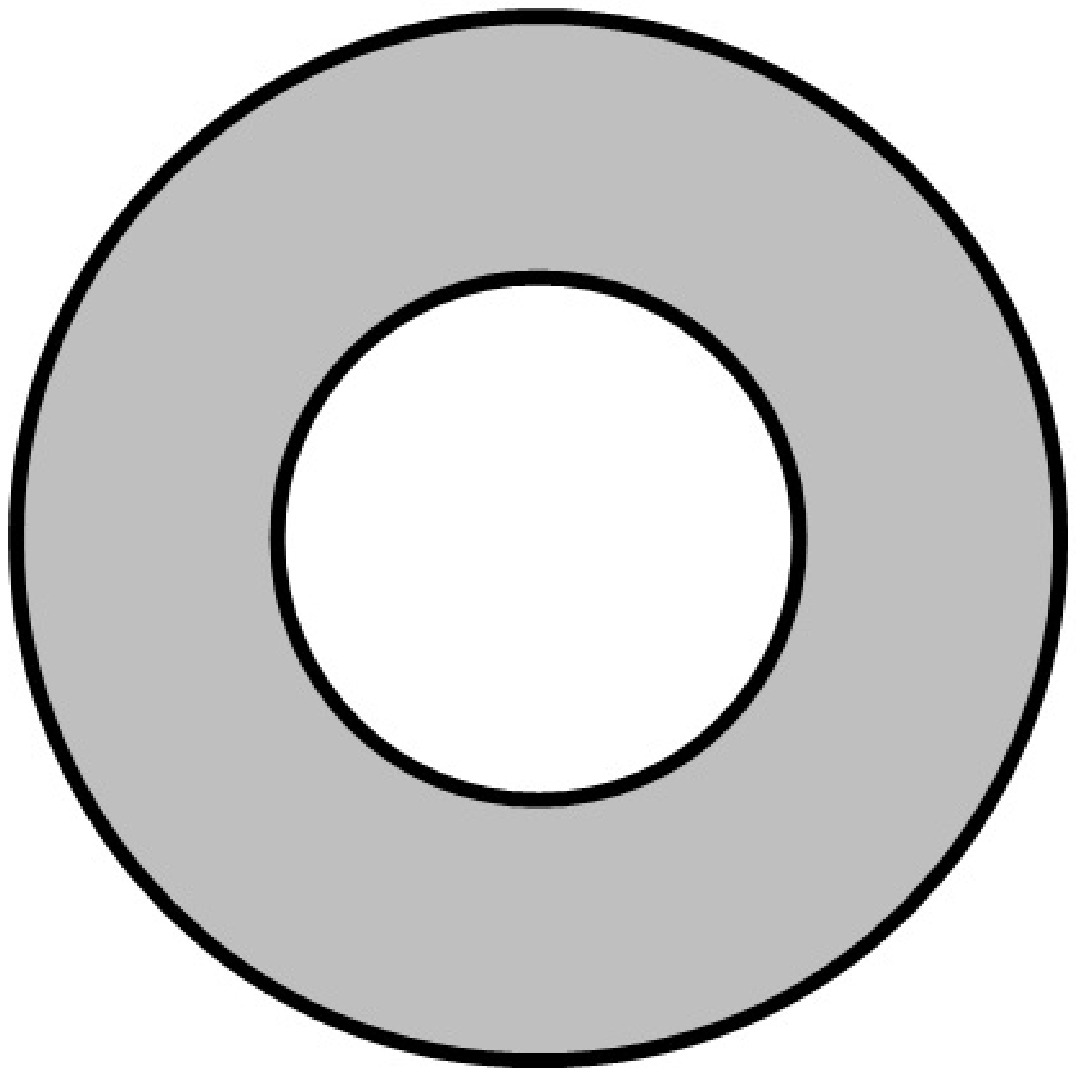}}
    \label{fig:mfld_orig}
}
\subfigure[]
{
  \centering
    {\includegraphics[scale=0.2]{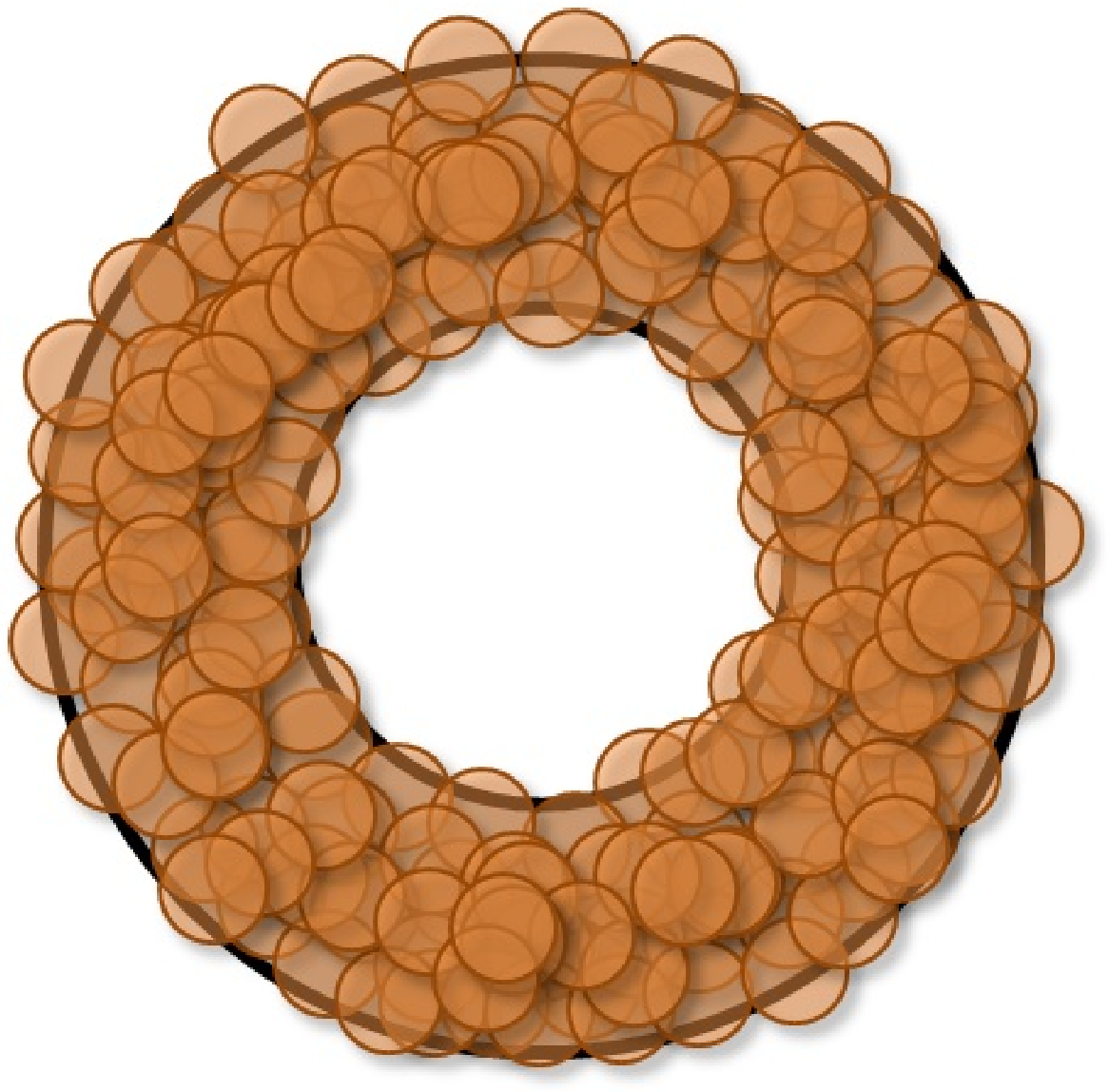}}
    \label{fig:mfld_no_noise}
}
\subfigure[]
{
  \centering
    {\includegraphics[scale=0.2]{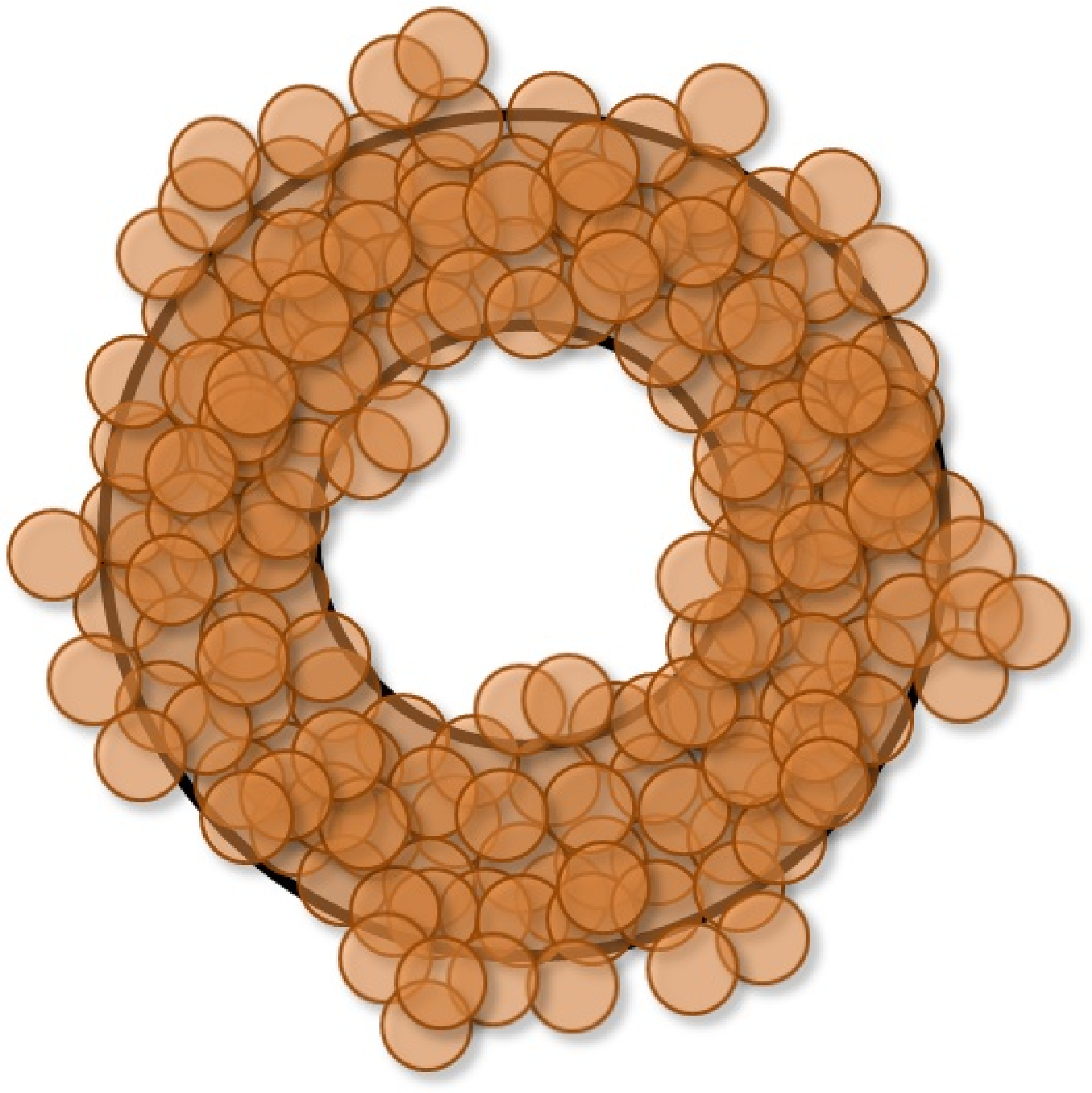}}
    \label{fig:mfld_noise_low}
}
\subfigure[]
{
  \centering
    {\includegraphics[scale=0.2]{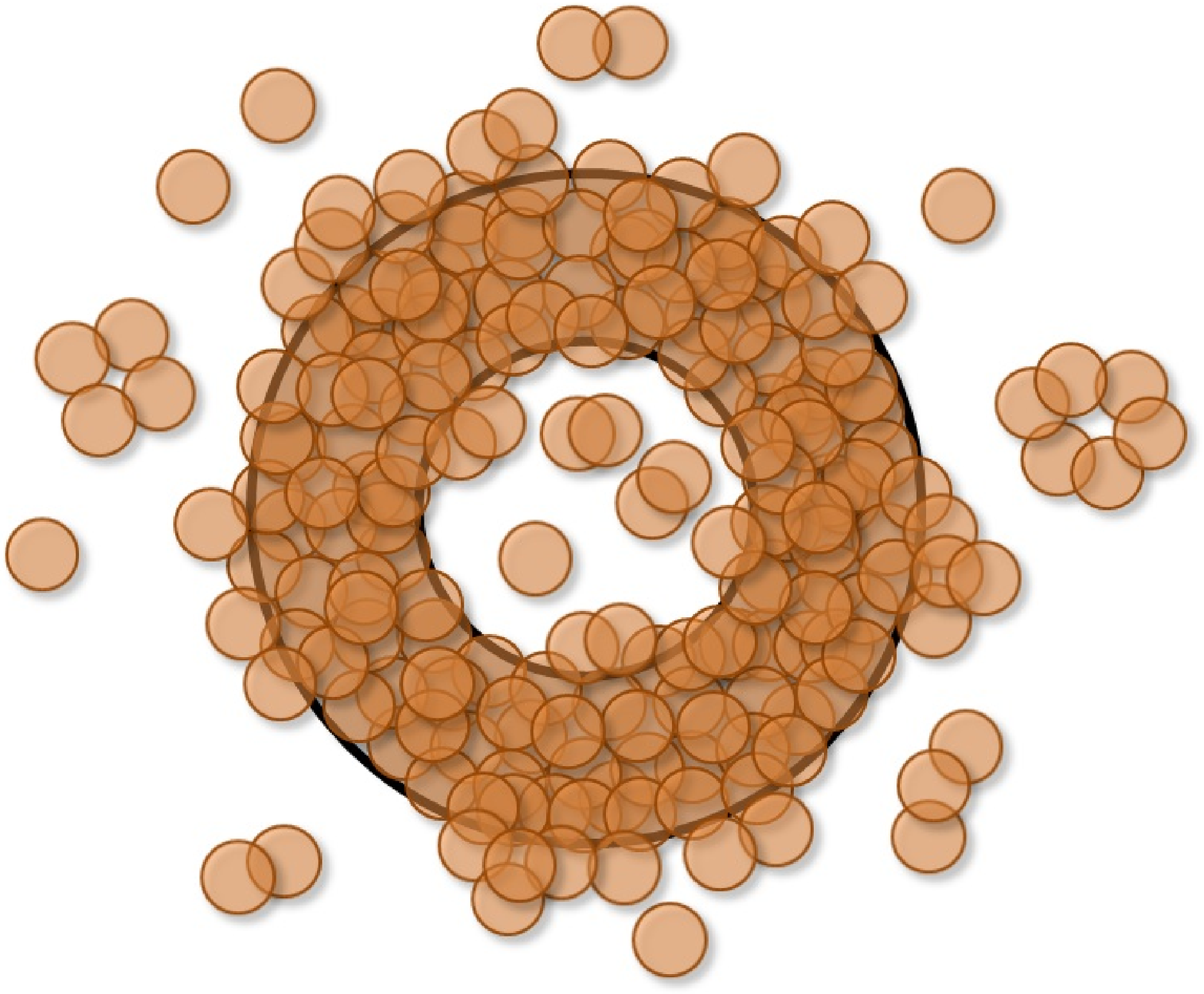}}
    \label{fig:mfld_noise_high}
}

\caption{(a) The original space  $\mathcal M$ (an annulus) that we wish to recover from random samples. (b) With the appropriate choice of radius, we can easily recover the homology of the original space from the union of balls around a random sample from $\mathcal M$. (c) In the presence of bounded noise, homology recovery is undamaged. (d) In the presence of unbounded noise, many extraneous homology elements appear, and significantly interfere with homology recovery.}
\label{fig:complexes:simp_comp}
\end{figure}

These results, as central as they are to the paper, will  be given in detail only in  Section \ref{s:application}.  There we shall show that the Betti numbers of the sets generating this spurious homology, which occurs far from the support of $\mathcal M$, satisfy Poisson limit laws. 
The limit laws are stated in terms of the distance of these sets from $\mathcal M$, so that they are in terms of limiting Poisson laws, the intensities of which depend significantly on the tail decay rate of the perturbative error. The practical importance of this is that it gives useful information on  {\it how much} crackle occurs, what its  {\it topological nature} is, and {\it where} it occurs, at least in a limiting scenario.  
The results of Section \ref{s:application}, therefore, are the main ones from the point of view of TDA.

The reason that we need to wait so long to get to these results is that the natural probabilistic setting for both stating and proving the theorems from which they follows is that of EVT. Recall that the classical setup of (multivariate) EVT takes a point process generated by independently and identically distributed (i.i.d.) random variables in  $\real^d$ -- a random point cloud --  scales and centers the cloud, and then, typically,  shows weak convergence of the new cloud to a Poisson random measure. The literature dealing with this setup is rich, and impossible to summarise here, but the main references for EVT are probably \cite{embrechts:kluppelberg:mikosch:1997}, \cite{resnick:1987}, and \cite{dehaan:ferreira:2006}, while a comprehensive introduction to point process theory is provided by \cite{daley:verejones:2003}.

%
%There exists a considerable number of studies on the case where the underlying sequence of random points is stationary, but dependent. Among all the researches related to a stationary dependent sequence some are \cite{davis:resnick:1985} (moving average process), \cite{davis:hsing:1995} (mild mixing process), \cite{basrak:davis:mikosch:2002b}(GARCH process), and \cite{resnick:samorodnitsky:2004} ($\alpha$-stable process) to mention just a few. 

Beyond these more classical papers, over the last decade or so there have  been a number of papers treating geometric descriptions of multivariate extremes from the perspective of point process theory, among them \cite{balkema:embrechts:2007}, \cite{balkema:embrechts:nolde:2010}, and \cite{balkema:embrechts:nolde:2013}.  In particular, Poisson limits of point processes possessing a U-statistic structure were investigated by \cite{dabrowski:dehling:mikosch:sharipov:2002} and \cite{schulte:thale:2012}, the latter also treating  a number of examples in stochastic geometry. 

All of these papers focused on the (limiting) distributions of extreme sample clouds, but so far, surprisingly few attempts have been made at analyzing their topological features. The objective of the present work is to present a unified, geometric, and topological approach to the point process convergence of extreme sample clouds. 
%More specifically, we concentrate on geometric objects (such as \Cech complexes)  constructed from extreme random points, 
%
%. In particular, the main motivation of this study comes from the study on the limiting behaviour of the so-called \textit{Betti numbers} associated with extreme random points. The Betti number is a basic notion in algebraic topology; roughly speaking, given a topological space $X$, the $0$th Betti number $\beta_0(X)$ represents connected components and for $k \geq 1$, the $k$th Betti number $\beta_k(X)$ represents $k$-dimensional ''holes'' or ''cycles''. More intuitively, $\beta_1(X)$ corresponds to the number of ''closed loops'' and $\beta_2(X)$ gives the number of ''hollows'' or ''voids''. For example, a one-dimensional sphere has $\beta_0=1$ and $\beta_1=1$ (and other Bettis are zero), and a two-dimensional sphere possesses $\beta_0=1$, $\beta_1=0$, and $\beta_2=1$ (and other Bettis are zero). Moreover, in case of a two-dimensional torus we see that $\beta_0=1$, $\beta_1=2$, and $\beta_2=1$ (and other Bettis are zero). Although, more formally, $\beta_k(X)$ is defined as the dimension of the \textit{$k$th homology group}, the essence of this paper can be captured without rigorous knowledge about homology theory. In the sequel, simply viewing $\beta_k(X)$ as the number of $k$-dimensional holes will be enough. The readers who are interested in a thorough coverage of homology theory may refer to \cite{vick:1994} and \cite{hatcher:2002}. 
%
Results on the limiting behaviour of  geometric complexes are quite new, with results not unrelated to our own, include
  \cite{kahle:2011}, \cite{kahle:meckes:2013}, \cite{yogeshwaran:adler:2015}, and \cite{yogeshwaran:subag:adler:2014}. In particular \cite{kahle:meckes:2013} and \cite{yogeshwaran:subag:adler:2014} derived various central and Poisson limit theorems for the Betti numbers of the  random \v{C}ech complexes $\check{C}(\mathcal X_n, r_n)$, with $\mathcal X_n$ a  random point set in $\bbr^d$ and $r_n$  a threshold radius. The resulting  limit theorems depend heavily on the asymptotics of $nr_n^d$, as $n \to \infty$. For example, \cite{kahle:meckes:2013} investigated the \textit{sparse regime} (i.e.\ $nr_n^d \to 0$) so that the spatial distribution of complexes is sparse, and they are observed mostly as isolated components. In contrast, the main focus of \cite{yogeshwaran:subag:adler:2014} was the \textit{thermodynamic regime} (i.e.\ $nr_n^d \to \xi \in (0,\infty)$) in which complexes are large and highly connected. 
  
However, with the single exception of \cite{adler:bobrowski:weinberger:2014}, already discussed above, none of  these papers 
has results related to  crackle.  The main discovery of \cite{adler:bobrowski:weinberger:2014} was that the topological behaviour of the individual components of geometric complexes 
built over point clouds of (typically)  heavy tailed random variables  is determined 
by how far away they lie from $\mathcal M$. In the simple case in which $\mathcal M$ is a single point at the origin, this is exhibited by the 
 formation of `rings' around the origin, each ring containing points of the cloud which exhibit quite different geometric and topological properties. The positions of the rings depend crucially on the tail decay rate of the  underlying probability distribution of the observations as well as the topological property being studied. 
 The contribution of the present paper is to take the discovery of \cite{adler:bobrowski:weinberger:2014}, made on the basis of only two examples, and develop it into a full theory, covering wide classes of distributions and developing far more detailed descriptions of the crackle phenomenon. 
 % while the results in \cite{kahle:meckes:2013} are more robust to whether the density has a heavy tail or a light tail. }
%All this will be made precise below.   
  
%  
%   The most relevant study to this paper is its forerunner \cite{adler:bobrowski:weinberger:2014}, which discusses the Betti numbers of \v{C}ech complexes generated by extreme random points. However, the results there are far more limited than those following. In particular, only three point cloud distributions (power-law, exponential, and Gaussian) are treated there,  and all results are stated in terms of the asymptotic of expected Betti numbers of \Cech complexes.
%
%
%
% distribution. It was also shown that in the case of the standard Gaussian distribution, all the Betti numbers of order $k \geq 1$ asymptotically vanish. In other words, Gaussian random points are distributed close enough to each other, and consequently, the resulting \v{C}ech complex is contractible. On the contrary, for exponential and power-law distributions, the topology of the \v{C}ech complex built over extreme random points is highly non-trivial. More precisely a bunch of layers of non-trivial homologies and Betti numbers, called \textit{crackle}, are observed. As an important corollary of the main limit theorem on point processes, we are going to revisit the study of crackle phenomena and discuss the asymptotics of the Betti numbers in a more general framework. In particular, Poisson weak limits of the Betti numbers will be proved. 

The  remainder of the paper is structured as follows: In Section \ref{s:general}, we establish general results on  point process convergence under certain geometric constraints.  Since the results here are very general, the conditions of the theorems may at first seem rather unnatural and hard to check. That this is actually not the case becomes clear when we turn to specific scenarios  depending on the tail properties of the underlying distributions.  In doing so, it turns out to be natural to distinguish  between what we shall henceforth call `heavy-tailed' (e.g.\ power law tails) and `light-tailed' (e.g.\ $ e^{-||x||^\tau}$, $\tau >0$) densities. Section \ref{s:heavy.tail} defines heavy-tailed densities more precisely,  via regular variation of tails   (giving marginal distributions in the max-domain of attraction of the 
Fr\'echet law) and then applies the general results of Section 
\ref{s:general} to obtain limiting Poisson results for this case. 
 Section \ref{s:light.tail}  then does the same for  light-tailed densities (with tails controlled by the von Mises function). 
 Typically, a standard argument in EVT characterises distributional tails via distribution functions (not densities), but our calculation will proceed relying heavily on the treatment of densities. Therefore, in both Sections \ref{s:heavy.tail} and \ref{s:light.tail}, we shall  distinguish the underlying distributions using density functions. 

The final Section \ref{s:application} presents several applications of the point process convergence, one of which is the limit theory of  Betti numbers alluded to above. In addition, we give some independently interesting  limit theorems on partial sums and maxima under geometric constraints. To the best of our knowledge, these results have no precursors in the random topology literature, since, while they are simple corollaries of the Poisson convergence that is our main theme, proving them {\it ab initio} would be quite hard. 

A final remark is that all random points in the present paper are generated from an inhomogeneous Poisson point process in $\bbr^d$ with intensity $nf$ ($f$ is a density function). However, all the results obtained can be carried over to the  situation of an i.i.d.\ random sample without difficulty. Further, we only consider spherically symmetric densities. Although the spherical symmetry assumption has very little to do with our results, we adopted it in order to avoid unnecessary technicalities, which would otherwise blur the message of the paper as well as making the treatment much longer and more cumbersome.

Since the paper is rather long, and unavoidably heavy on notation, we conclude the Introduction with a short notation guide.

\begin{table}[!h]
\begin{tabular}{|l|l|c|}
\hline
Symbol      & Definition or Meaning & Place Defined\\ 
\hline 
 $\Check{C}(\X,r)$       &      \Cech complex          &  Definition \ref{cech:defn}            \\
 $G(\X,r)$       &      Geometric graph          &  Definition \ref{def.geometric.graph}            \\
   $\X$  & Point cloud $\{X_1,\dots\} $       &              \\
     $\mathcal P_n$     & Poisson point process with intensity $nf$               &              \\
   $\X_k$     &  First $k$ points in $\X$               &       \eqref{Xk:def}       \\
   $\X_{\bi}$, $\bi = \{i_1,\dots, i_k\}$  & Subset $\{X_{i_1},\dots,X_{i_k}\}$ of $\X$ &   \eqref{Xi:def}   \\ 
    $\mathcal I_{n,k}$       &  All subsets of $\{1,\dots,n\}$ of size $k$         &      \eqref{In:def}        \\
          $r_n, \ c_{k,n},\ d_{k,n},\ R_{k,n}$ etc.   & Scaling sequences, specific to section               &              \\
           $X_i^{(n)}$     &  $X_i$, translated and scaled              &       \eqref{e:abb.X1}       \\
                     $\Xsupnk$     &  First $k$ points of normalised $X_i$s              &       \eqref{e:abb.X2}       \\
          $\Xsupn_\bi$, $\bi = \{i_1,\dots, i_k\}$    &  Subset $\{ X_{i_1}^{(n)}, \dots, X_{i_k}^{(n)} \}$          &       \eqref{e:abb.X3}       \\
         $S(x)$   & Maps  $x\in\real^d$ onto +ve orthant of   $S^{d-1}$               &  \eqref{SX:def}            \\
         $L_k$   &   Upper-diagonal subset of a $k$-dim. cube       &   \eqref{e:upper.diagonal}     \\
    $\regN_n^{(k)}$   &  No.\ of $k$-tuples satisfying a  condition               &    \eqref{Nnk:defn}          \\
    $\tildeN_n^{(k)}$   &    No.\ of {\it isolated} $k$-tuples as above           &       \eqref{e:related.pp}       \\ 
 $p(\bx;r)$  & Local integral of density               &        \eqref{e:comp.func}      \\
     $J(\theta)$      &  Polar Jacobian                &    \eqref{J:def}          \\
          $x \preceq y$, for $x,y\in\real^d$          &  $x_i\leq y_i$, $i=1,\dots,d$ &               \\
$x \prec y$, for $x,y\in\real^d$          &  $x_i < y_i$, $i=1,\dots,d$ &               \\ 
$a \vee b$, for $a,b \in\real$          &  $\max \{ a,b \}$ &               \\ 
$a \wedge b$, for $a,b \in\real$          &  $\min \{ a,b \}$ &               \\ 
$B(x; r)$      &  Ball, centre $x$,    radius $r$            &              \\
$||\cdot||$      &  Euclidean norm           &              \\
    $S^d$    &  $d$-dimensional unit radius sphere               &              \\
      $s_d$    &   Surface area of $S^d$             &              \\
        $\lambda_k$     & $k$-dimensional Lebesgue measure               &              \\
 $Poi(\lambda)$ & Poisson distribution with mean $\lambda$& \\
 $\Rightarrow$ & Weak convergence & \\
 $\mathbf{0}$       & Vector of zeros       &              \\
$e_1$       & Vector with 1st comp. $1$ and others zero       &              \\
    Ann$(a,b)$    & Annulus, $\{x\in\real^d\: a\leq \|x\|\leq b\}$               &              \\
     \hline
\end{tabular}
\end{table}

\section{Point Process Convergence: General Theory}  \label{s:general}

In this section we prove a general convergence result for point processes with geometric constraints. The main result,
Theorem \ref{t:general.thm}, which holds under somewhat opaque conditions,  will then be applied in the two following sections to obtain results under specific, and quite natural,  distributional assumptions. Before getting to results, however, we need to set up a framework and notation, which we do in the following subsections.

\subsection{Point clouds and indicator functions for geometry}
\label{pc:subsec}

We start with a collection  
$\X= \{X_i, \, i \geq 1\}$ of i.i.d., $\bbr^d$-valued random variables with spherically symmetric probability density $f$. For $n \geq 1$, let $N_n$ be a Poisson random variable with mean $n$, and independent of the $X_i$. Then the random measure with atoms at the points   $X_1,  \dots, X_{N_n}$, which we denote by $\mathcal P_n$, is an inhomogeneous  Poisson point process with intensity $nf$. 
 
 Now choose a positive integer $k$, which will remain fixed for the remainder of the paper. Although some of our results hold for $k=1$, we shall  henceforth  take $k\geq 2$, unless otherwise stated. As will soon be clear, in the $k=1$ case many of the objects and functions we build will degenerate, in the sense that the corresponding results will become known results of non-topological EVT.
 
 Let 
 \beq   \label{Xk:def}
 \X_k \ \definedas \ \{ X_1, \dots, X_k \}
 \eeq
 be the first $k$ points and for an ordered $k$-tuple $\bi = (i_1,\dots,i_k)$,  $1 \leq i_1 < \dots < i_k$, consider the $k$-tuple of points
   \beq
 \label{Xi:def}
 \X_\bi \ \definedas \  \left\{X_{i_1}, \dots, X_{i_k} \right\}.
  \eeq
We are interested in the geometric objects built over the $\X_\bi$, for all $\bi$ in the set $\mathcal I_{|\Pn|,k}$, where 
\beq
\label{In:def}
\mathcal I_{m,k}\  \definedas \  \left\{\bi=(i_1,\dots,i_k)\: 1 \leq i_1 < \dots < i_k \leq m \right\},
 \eeq
 and $|\mathcal  P_n|=N_n$ is the number of atoms of $\mathcal P_n$.

 To set this up formally, we define an indictor function  $h\:(\bbr^d)^k \to \{ 0,1 \}$ which will capture the geometry.  
 Two concrete examples related to problems that will be considered in detail later sections are 
\beq
h(x_1, \dots, x_k) &= \indic\bigl( G \left( \{ x_1, \dots, x_k \}, \, 1 \right) \cong \Gamma \bigr),  \label{e:geometric.graph} 
\eeq
and
\beq
h(x_1, \dots, x_k) &= \indic \bigl(\beta_{k-2} \left( \check{C} \left( \{ x_1, \dots, x_k \}, \, 1 \right) \right) = 1  \bigr), \label{e:betti.number}
\eeq
where $\indic$ is the indicator function, $G$ is a geometric graph, $\Gamma$ is a fixed connected graph with $k$ vertices, $\cong$ denotes graph isomorphism, and  $\check{C}$ is a \v{C}ech complex.

Throughout the paper we shall require that $h$ is shift invariant, viz.
\begin{equation}  \label{e:invariance1}
h(x_1, \dots, x_k) \ =\  h(x_1+y, \dots, x_k+y), \quad\text{for all } x_1, \dots, x_k, y \in \bbr^d.
\end{equation}
We shall refer to this as the {\it shift invariance condition in what follows.
Further, we shall typically require a {\it points in proximity condition}, defined by the requirement } that there exists a finite $M > 0$ for which
\begin{equation}  \label{e:close.enough}
h(0,x_1,\dots,x_{k-1}) = 0 \ \ \text{if } \|x_i\| > M \ \, \text{for some } i \in \{ 1,\dots,k-1 \}.
\end{equation}
In addition, given a non-increasing sequence of positive numbers $(r_n, \, n \geq 1)$ (the case $r_n \equiv$ constant is permissible) we define  scaled versions of $h$, $h_n\:(\bbr^d)^k \to \{ 0,1 \}$, by setting
\begin{equation}  \label{e:indep}
h_n(x_1,\dots,x_k) \ \definedas\  h( x_1/r_n,\dots,x_k/r_n). 
\end{equation}
Clearly, $h_n$ is also shift invariant. Furthermore,  if $r_n$ is small, then  \eqref{e:close.enough} implies   that     $h_n(x_1,\dots,x_k) = 1$ only when  all the points $x_1,\dots,x_k$ are close to one another. 

In what follows, we shall be particularly interested in $k$-tuples $\Xi$, $\bi\in \mathcal I_{|\mathcal P_n|,k}$ which not only satisfy  the constraint
implicit in the indicator  $h_n$ but are also separated (by at least $r_n$) from the other points in  $\Pn$. Thus, we  need to introduce another sequence of  indicator functions. Given a  subset  $\mathcal{X}$  of $k$ points in $\bbr^d$, and a larger, but  finite, 
set $\mathcal{Y} \supset \mathcal{X}$ of points in the same space, define
\begin{equation}  \label{e:def.gn}
 g_n \left(\mathcal{X}, \mathcal{Y} \right) \ \definedas h_n (\mathcal{X} ) \times \one \left(G ( \mathcal{X} ,r_n ) \ \text{is an isolated component of } G (\mathcal{Y},r_n) \right). 
\end{equation}

\subsection{Sequences of point processes}
\label{evt:subsec}
Since we are going to be dealing with the geometry of random variables `in the tail' of $f$, as in standard EVT we are going to need to normalise them before trying to formulate any results.

Thus, with $k$ still fixed,  we assume that we have two sequences of constants,   $d_{k,n} \in \bbr$ and $c_{k,n}>0$, with which to shift and then scale the $X_i$, and, in  notations that will save a  lot of space, define:
\beq
X_i^{(n)} &\definedas& \frac{X_i - \dkn S(X_i)}{\ckn}  \label{e:abb.X1} \\
\Xsupnk &\definedas&   \left\{X_1^{(n)},\dots,X_k^{(n)}\right\}  \label{e:abb.X2}\\ 
\Xsupn_\bi &\definedas&   \left\{X_{i_1}^{(n)},\dots,X_{i_k}^{(n)}\right\} \label{e:abb.X3}
\eeq
where
\beq
\label{SX:def}
S(x) \ \definedas \  \frac{\left( \, |x^{(1)}|, \dots, |x^{(d)}| \, \right)}{\|x\|}
\eeq
maps points $x = (x^{(1)}, \dots, x^{(d)})$ in $\real^d$ onto the positive orthant of $S^{d-1}$.

We can now define the two random measures, that will be at the centre of all that follows, over the space 
 $E_k \times L_k$, where $E_k$ is a measurable set in $\left([-\infty,\infty]^d\right)^k$ and $L_k$ is the upper-diagonal subset of a $k$-dimensional cube, viz.
\begin{equation}  \label{e:upper.diagonal}
L_k  \ \definedas \  \bigl\{ (z_1,\dots,z_k) \in [0,1]^k: 0 \leq z_1 < \dots < z_k  \leq 1 \bigr\}\,.
\end{equation}

The measures, for Borel $A\subset E_k\times L_k$, and $\epsilon_x$ a Dirac measure at $x\in E_k\times L_k$,  are given by
\beq
\label{Nnk:defn}
 N_n^{(k)} (A) \ \definedas  \ \sum_{  \bi\in \IPk} h_n( \Xi) \,
 \epsilon_{(\Xsupn_\bi, \bi/ |\mathcal P_n|)}(A),
\eeq
and 
\beq
  \label{e:related.pp}
\tildeN_n^{(k)}(A)   \ \definedas  \ \sum_{  \bi\in \IPk} g_n( \Xi,  \Pn) \,
 \epsilon_{(\Xsupn_\bi, \bi/ |\Pn|)}(A).
\eeq
(Note that  $N_n^{(k)}  = \widetilde N_n^{(k)} \equiv 0$ whenever $k>n$.)

The point process ${N}_n^{(k)}$   counts $k$-tuples satisfying the geometric condition implicit in $h_n$, while 
  $\widetilde N_n^{(k)}$ adds the additional restriction that  the geometric graphs generated by such $k$-tuples be distance at least $r_n$ from all the remaining points of $\Pn$. It will turn out that, while ${N}_n^{(k)}$  always dominates 
  $\tildeN_n^{(k)}$,  both exhibit  very similar limiting behaviour. In fact, the proof of the general result on point process convergence will follow the route of first establishing a result for  $N_n^{(k)}$, which is somewhat easier because the geometric condition implicit in the indicator does not include $\Pn$, and then showing that since the two processes are close to one another the same limit holds for $\tildeN_n^{(k)}$.

\subsection{A general theorem}
\label{general:subsec}
Retaining the notation of the previous subsection, we now set up the main conditions that will ensure the weak convergence of the processes  ${N}_n^{(k)}$   and   $\widetilde N_n^{(k)}$. Although we ultimately want theorems in which conditions relate to the tail behaviour of the underlying density of the $X_i$, the conditions in this section relate both to this and to  what amounts to  asymptotic sparsity for  certain structures in $\Pn$. In  Sections \ref{s:heavy.tail} and  \ref{s:light.tail} we shall see how to derive this independence from tail considerations.

For our first condition, which we refer to as the {\it vague convergence condition}, we assume that there exist a Radon measure $\nu_k$ on a measurable set 
$E_k \subset ( [-\infty,\infty]^d )^k$, not assigning mass to points at infinity, and normalising constants, as in the 
previous subsection, such that
 \begin{equation} 
 \label{e:conv.in.exp}
 {n \choose k}
\ \P \left\{ h_n(\X_k) = 1,\  \Xsupnk \in \cdot  \right\}  \ \stackrel{v}{\to} \nu_k(\cdot),   % \ \ \text{in } E_k.
\end{equation}
where $\stackrel{v}{\to}$ denotes vague convergence in $E_k$.

The second, {\it sparsity condition,} is given by 
\begin{equation}
  \label{e:anti.cluster} %&&
n^{2k-l} \P \left\{h_n(\X_k) = h_n(X_1,\dots,X_l,X_{k+1},\dots,X_{2k-l})=1, %\right. \\
%&&\qquad\qquad \qquad\qquad \qquad\qquad \qquad\qquad  \left. 
\  \X^{(n)}_{2k-l} \in K \, \right\} \to  0, % \notag
\end{equation}
as $n \to \infty$,
for every $l=1,\dots,k-1$ and every compact set $K$ in $E_{2k-l}$. This condition prevents the random $k$-tuples from clustering too much, and will lead to a Poisson limit for ${N}_n^{(k)}$. The same kind of sparsity, or anti-clustering, condition can be found in, for example, \cite{dabrowski:dehling:mikosch:sharipov:2002} and \cite{schulte:thale:2012}.

An additional  condition will be needed for the convergence of $\tildeN_n^{(k)}$. For this,   let  
$\mathcal{P}_n^{\prime}$
be an independent copy of $\mathcal{P}_n$. Then this condition, combined with  \eqref{e:conv.in.exp}, requires that, asymptotically in $n$, the $k$ points in $\mathcal X_k$ all be distance at least  $r_n$ from the points in 
$\Pn^{\prime}$.  Specifically,
\begin{equation}
\label{e:component}
{n \choose k}  \P \left\{ g_n ( \X_k,\, \X_k \cup \mathcal{P}_n^{\prime}) = 1\,,
\Xsupnk  \in \cdot \right\} \ \stackrel{v}{\to}  \ \nu_k(\cdot).
\end{equation} 

As one might expect from these conditions, when they hold the weak limits of $N_n^{(k)}$ and $\tildeN_n^{(k)}$  will typically  be expressible via Poisson random measures, which we denote by $N^{(k)}$, and now define.

Writing $\lambda_k$ for $k$-dimensional Lebesgue measure,   the Poisson random measure $N^{(k)}$ on $E_k \times L_k$, with mean measure 
$\nu_k \times \lambda_k$ (with $\lambda_k$ restricted to $L_k$) is defined by the finite dimensional distributions 
$$
\P \bigl( N^{(k)} (A) =m \bigr) = e^{-(\nu_k \times \lambda_k)(A)} \bigl( (\nu_k \times \lambda_k)(A) \bigr)^m/ m!\,, \ \ \ m=0,1,2,\dots,
$$
 for all measurable  $A \subset E_k \times L_k$ with $(\nu_k \times \lambda_k)(A) < \infty$. Furthermore, if $A_1, \dots, A_m$ are disjoint subsets in $E_k \times L_k$, then $N^{(k)}(A_1), \dots, N^{(k)}(A_m)$ are independent. 
 
 The random measures $N_n^{(k)}$, $\tildeN_n^{(k)}$, and $N^{(k)}$ are all regarded as random elements in the space $M_p (E_k \times L_k)$ of Radon point measures in $E_k \times L_k$.    Details on such random measures, including issues related to their weak convergence in the space of Radon measures, along with the  vague convergence of  \eqref{e:conv.in.exp} and \eqref{e:component}, can be found, for example, in \cite{resnick:1987} and \cite{resnick:2007}. 

Before stating the main result of this section, a few words about the above three conditions are in order.
Firstly, we note that more  specific definitions of the state space $E_k$ and concrete definitions of the normalising constants $(c_{k,n}, d_{k,n})$ will be given in the subsequent sections, where they will be seen to be dependent on whether the underlying density $f$ has heavy tail or light tail.

 In the heavy-tailed density case, we take $E_k = ([-\infty,\infty]^d)^k \setminus \{ \mathbf{0} \}$ and $d_{k,n} \equiv 0$. Thus there is no shift of the points $X_i$ by $S(X_i)$, and all the scaling does is bring points closer to the origin.  Consequently the point processes $N_n^{(k)}$ and $\tildeN_n^{(k)}$ asymptotically count geometric events equally likely in all the orthants.

On the other hand, in the case of light-tailed densities, one needs to take $E_k = ((-\infty,\infty]^d)^k$ and $d_{k,n}$ is generally non-zero.  Since  $X_i$ is translated by $d_{k,n}S(X_i)$, this implies that, at least  when $n$  is large,  
$N_n^{(k)}$ and $\tildeN_n^{(k)}$  count geometric events basically only when they occur in the non-negative orthant.
Nevertheless, the assumed spherical symmetry of the density allows our main results to be extended to general  domains with only minor additional arguments. 

Thus, since the aim of this section is to establish general theory on point process convergence,  we shall  leave this interperative complication until it arises later in specific examples.

%will treat both cases simultaneously by assuming throughout this section that $E_k = \bigl([-\infty,\infty]^d\bigr)^k \setminus \{ \mathbf{0} \}$ or $E_k = \bigl((-\infty,\infty]^d\bigr)^k$.

As remarked above, throughout the paper we take $k \geq 2$.  Given the definitions above, we can now explain why this is the case. If $k=1$, there is no  shift invariant indicator  to associate with the point process, and we have
$$
{N}_n^{(1)} (\cdot)\  =\ \sum_{i=1}^{|\mathcal{P}_n|} \epsilon_{\bigl(  c_{1,n}^{-1} (X_i - d_{1,n} S(X_i))\,, \ i /|\mathcal{P}_n| \bigr)} (\cdot)\,.
$$
This type of the point process has been well studied in classical EVT, and we have nothing to add about it.  

We can now state the main result of this section. The proof is deferred to the  Appendix.

\begin{theorem}  \label{t:general.thm}
Let $(X_i, \, i \geq 1)$ be a sequence of i.i.d. $\bbr^d$-valued spherically symmetric random variables. Let $h_n\:(\bbr^d)^k \to \{ 0,1 \}$ be a sequence of  indicators as  in  \eqref{e:indep}, satisfying  shift invariance as in \eqref{e:invariance1} and the points in proximity condition \eqref{e:close.enough}. Assume also that the vague convergence  condition \eqref{e:conv.in.exp} and the sparcity condition \eqref{e:anti.cluster} hold.  Then
$$
{N}_n^{(k)} \   \Rightarrow  \  N^{(k)},
$$
where $\Rightarrow$ denotes weak convergence in the space $M_p (E_k \times L_k)$. \\
 In addition, suppose that the global separation condition \eqref{e:component} holds. Then we also have that 
$$
 \widetilde N_n^{(k)}  \Rightarrow N^{(k)}.
$$ 
\end{theorem}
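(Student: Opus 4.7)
My plan is to invoke Kallenberg's criterion for convergence to a simple Poisson point process: on a convergence-determining semi-ring of relatively compact rectangles $A = B \times C \subset E_k \times L_k$ with $\nu_k(\partial B) = \lambda_k(\partial C) = 0$, it suffices to verify (I) $\mathbb{E}[N_n^{(k)}(A)] \to \mathbb{E}[N^{(k)}(A)]$ and (II) $\mathbb{P}(N_n^{(k)}(A) = 0) \to \mathbb{P}(N^{(k)}(A) = 0)$. For (I), I would condition on $|\Pn|=m$ and use the exchangeability of the i.i.d.\ sample $X_1, \dots, X_m$ to factor
\[
\mathbb{E}\bigl[N_n^{(k)}(B \times C) \,\big|\, |\Pn|=m\bigr] = \bigl|\{\bi \in \mathcal I_{m,k} : \bi/m \in C\}\bigr| \cdot \mathbb{P}\bigl(h_n(\X_k) = 1,\, \Xsupnk \in B\bigr).
\]
The second factor, scaled by $\binom{n}{k}$, tends to $\nu_k(B)$ by the vague convergence condition \eqref{e:conv.in.exp}; the first, normalised by $\binom{m}{k}$, converges by a standard lattice-point estimate to the mass of $C$ under the joint law of $k$ i.i.d.\ Uniform$[0,1]$ order statistics. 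Combined with $|\Pn|/n \to 1$ almost surely, these pieces yield (I).

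For (II), I would go via factorial moments, using the fact that a non-negative integer-valued $Y$ is Poisson$(\mu)$ iff $\mathbb{E}[Y(Y-1)\cdots(Y-j+1)] = \mu^j$ for every $j \geq 1$, and this characterisation is stable under convergence. The $j$-th factorial moment of $N_n^{(k)}(A)$ expands as a sum over ordered $j$-tuples $(\bi^{(1)}, \dots, \bi^{(j)})$ of distinct elements of $\IPk$, and I would organise this sum by the overlap pattern of the underlying $k$-subsets. Pairwise-disjoint configurations involve independent blocks of $X_i$'s, and by (I) applied coordinate-wise they contribute the leading term, asymptotically $(\mathbb{E}N^{(k)}(A))^j$. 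Every remaining configuration is killed by the sparsity condition \eqref{e:anti.cluster}, which directly handles any pair of $k$-tuples sharing $l \in \{1, \dots, k-1\}$ points; for $j \geq 3$ one simply picks an overlapping pair inside the configuration and reduces to that pairwise bound. \textbf{The main obstacle} is the combinatorial bookkeeping for $j \geq 3$, namely verifying that the pairwise sparsity bound is really sufficient to smother every higher-order overlap pattern.

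For the assertion on $\widetilde{N}_n^{(k)}$, observe that $\widetilde{N}_n^{(k)}(A) \leq N_n^{(k)}(A)$ pointwise. Since both are integer-valued, it suffices to show $\mathbb{E}\bigl[N_n^{(k)}(A) - \widetilde{N}_n^{(k)}(A)\bigr] \to 0$ on the rectangles; Markov's inequality then gives $\mathbb{P}(N_n^{(k)}(A) = \widetilde{N}_n^{(k)}(A)) \to 1$, and the weak limit transfers from the first part. Because $\mathbf{1}_C \leq 1$, it is enough to treat $A = B \times L_k$, for which the $k$-th order Mecke formula for the Poisson process $\Pn$ gives
\[
\mathbb{E}\bigl[N_n^{(k)}(B \times L_k) - \widetilde{N}_n^{(k)}(B \times L_k)\bigr] = \tfrac{n^k}{k!}\bigl[\mathbb{P}\{h_n(\X_k) = 1,\, \Xsupnk \in B\} - \mathbb{P}\{g_n(\X_k,\, \X_k \cup \Pn') = 1,\, \Xsupnk \in B\}\bigr],
\]
where $\Pn'$ is an independent copy of $\Pn$. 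Both probabilities, scaled by $\binom{n}{k} \sim n^k/k!$, converge to $\nu_k(B)$ -- the first by \eqref{e:conv.in.exp} and the second by the global separation condition \eqref{e:component} -- so the difference vanishes as $n \to \infty$.
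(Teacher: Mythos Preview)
Your handling of the expectation convergence and of the transfer from $N_n^{(k)}$ to $\widetilde N_n^{(k)}$ is essentially what the paper does: the former is Lemma~\ref{l:palm.lebesgue1} combined with \eqref{e:conv.in.exp}, and the latter is the Mecke/Palm identity of Lemma~\ref{l:palm.penrose} together with the difference of \eqref{e:conv.in.exp} and \eqref{e:component}.

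The real divergence is in your route to the void probabilities. The paper does \emph{not} use factorial moments. It first replaces the Poisson sample by a fixed-$n$ sample (Lemma~\ref{l:palm.lebesgue2}), and then applies the Stein--Chen bound of Arratia--Goldstein--Gordon with the dependency graph $\bi\sim\bj \Leftrightarrow \bi\cap\bj\neq\emptyset$:
\[
d_{\mathrm{TV}}\Bigl(\sum_{\bi}\xi_{\bi,n},\ Poi\bigl(\E\textstyle\sum_{\bi}\xi_{\bi,n}\bigr)\Bigr)\ \le\ 3\sum_{\bi}\sum_{\bj\in N_\bi}\bigl(\E\{\xi_{\bi,n}\}\E\{\xi_{\bj,n}\}+\E\{\xi_{\bi,n}\xi_{\bj,n}\}\bigr).
\]
By design this involves only \emph{pairs} $(\bi,\bj)$: the first sum is $O(n^{-1})$ by a neighbourhood-size count, and the second is precisely what \eqref{e:anti.cluster} kills.

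Your factorial-moment argument, as written, asks for more than the hypotheses provide. The obstacle you flag is real and is not resolved by ``picking an overlapping pair''. Take $k=2$, $j=3$ and the chain $\bi^{(1)}=\{1,2\}$, $\bi^{(2)}=\{2,3\}$, $\bi^{(3)}=\{3,4\}$: four distinct indices, connected overlap graph. Its contribution to the third factorial moment is of order
\[
n^{4}\,\P\bigl\{h_n(X_1,X_2)=h_n(X_2,X_3)=h_n(X_3,X_4)=1,\ \X^{(n)}_4\in K\bigr\}.
\]
Dropping the third constraint and integrating out $X_4$ gives at best $n\cdot\bigl(n^{3}\,\P\{h_n(X_1,X_2)=h_n(X_2,X_3)=1,\ \X^{(n)}_3\in K'\}\bigr)=n\cdot o(1)$ via \eqref{e:anti.cluster}, which is indeterminate; and the three blocks do not factor, so you cannot multiply a pairwise $o(1)$ against a first-moment $O(1)$. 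No iteration of the pairwise bound absorbs the leftover power of $n$ without either a higher-order analogue of \eqref{e:anti.cluster} or direct density estimates---both of which do hold in the concrete settings of Sections~\ref{s:heavy.tail}--\ref{s:light.tail}, but neither is assumed in Theorem~\ref{t:general.thm}. The sparsity condition is stated for pairs precisely because the Stein--Chen argument needs no more; the factorial-moment route would require a genuinely stronger hypothesis.
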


\begin{remark}
Note that the case $h \equiv 1$ is precluded by \eqref{e:close.enough}. In fact, if $h \equiv 1$, then it is easy to see that  \eqref{e:conv.in.exp} and \eqref{e:anti.cluster} cannot hold simultaneously.  
\end{remark}

\section{Heavy Tail Cases}  \label{s:heavy.tail}

As a special case of the general theory introduced in the last section, we shall  discuss  point process convergence when the underlying density on $\bbr^d$ has a heavy tail. More specifically, we assume that the density has a regularly varying tail (at infinity) in the sense that, for every $\theta \in S^{d-1}$ (equivalently, for some $\theta \in S^{d-1}$, due to the spherical symmetry of $f$), and  some $\alpha > d$,
\beqq
\lim_{r \to \infty} \frac{f(tr\theta)}{f(r\theta)} = t^{-\alpha}, \ \ \ \text{for every } t>0.
\eeqq
%$$
%\lim_{r \to \infty} \frac{f(rte_1)}{f(re_1)} = t^{-\alpha} \ \ \text{for every } t>0\,,
%$$
%where $e_1 = (1,0,\dots,0)^{\prime}$ is a $d$-dimensional canonical basis vector. 
Writing $RV_{-\alpha}$ to denote the family of regularly varying functions (at infinity) of exponent $-\alpha$, then the above is equivalent to the requirement 
\begin{equation}  \label{e:RV.tail}
f\  \in\  RV_{-\alpha}\,.
\end{equation}
In the  one-dimensional case ($d=1$) it is known that  regular variation of the tail in $f$ suffices for the distribution to be in the max-domain of attraction of the Fr\'echet law; see for example Theorem 3.3.7 in \cite{embrechts:kluppelberg:mikosch:1997}. 

In general, given a non-increasing sequence of positive numbers $(r_n, \, n \geq 1)$, we define $h_n$, $g_n$ as in Section \ref{s:general}. Then, for a fixed positive integer $k \geq 2$, the point processes we are going to explore are
\begin{equation}  \label{e:def.pp.heavy2}
\regN_n^{(k)}(\cdot) = \sum_{\mathbf{i} \in \IPk} h_n(\mathcal{X}_{\bi}) \epsilon_{\bigl( R_{k,n}^{-1} \mathcal{X}_{\bi}\,, \ \mathbf{i} /|\mathcal{P}_n| \bigr)} (\cdot)\,,  
\end{equation}
and 
\begin{equation}  \label{e:def.pp.heavy1}
\tildeN_n^{(k)}(\cdot) = \sum_{\mathbf{i} \in \IPk} g_n(\mathcal{X}_{\bi}, \mathcal{P}_n) \epsilon_{\bigl( R_{k,n}^{-1} \mathcal{X}_{\bi}\,, \ \mathbf{i} /|\mathcal{P}_n| \bigr)} (\cdot)\,,  
\end{equation}
where $(R_{k,n}, \, n \geq 1)$ is asymptotically determined by 
\begin{equation}  \label{e:normalizing.heavy}
n^k r_n^{d(k-1)} R_{k,n}^d f(R_{k,n}e_1)^k \to 1 \ \ \text{as } n \to \infty\,,
\end{equation}
with $e_1 = (1,0,\dots,0)^{\prime}$. 

\begin{remark}  \label{re:no.rapid.decay.heavy} Note that \eqref{e:normalizing.heavy} implicitly precludes 
 very fast decay of $r_n$ to zero. In fact, if $n^kr_n^{d(k-1)} \to 0$ as $n \to \infty$, then \eqref{e:normalizing.heavy} implies that we must also have  that $R_{k,n}^d f(R_{k,n}e_1)^k \to \infty$, which contradicts \eqref{e:RV.tail}.  However, under the implicit restriction that such rapid convergence does not occur, \eqref{e:normalizing.heavy} effectively determines the $R_{k,n}$ as an intrinsic function of $k$ and $r_n$. 

 Note also that the case  $r_n \equiv 1$ was the one treated in \cite{adler:bobrowski:weinberger:2014}, although for a far more limited class of densities. In the case  $r_n\equiv 1$,  it is easy to check that  $R_{k,n} \in RV_{(\alpha-d/k)^{-1}}$ as a function of $n$.
  \end{remark} 

The convergence in law of $\regN_n^{(k)}$ and $\tildeN_n^{(k)}$ takes place in the space $M_p(E_k \times L_k)$, where $E_k = \bigl([-\infty,\infty]^d\bigr)^k \setminus \{ \mathbf{0} \}$ and $L_k$ is given in \eqref{e:upper.diagonal}. Let $N^{(k)}$ be a limiting Poisson random measure, whose intensity is, as in Section \ref{s:general}, denoted by $\nu_k \times \lambda_k$, where $\lambda_k$ is the $k$-dimensional Lebesgue measure concentrated on $L_k$. As a consequence  of the heavy tail of $f$, the measure $\nu_k$ also exhibits a 
power-law tail structure. More specifically,  for a rectangle $(a_0, b_0] \times \dots \times (a_{k-1}, b_{k-1}] \subset E_k = \bigl([-\infty,\infty]^d\bigr)^k \setminus \{ \mathbf{0} \}$, which  is bounded away from the origin, 
\begin{align} 
\nu_k \bigl( (&a_0, b_0] \times \dots \times (a_{k-1}, b_{k-1}] \bigr)  \label{e:def.nu.heavy} \\
&= \frac{1}{k!} \int_{(\bbr^d)^{k-1}} h(0,\by) d\by \int_{a_i  \prec x \preceq b_i\,, \ i=0,\dots,k-1} \|x\|^{-\alpha k} dx  \notag
\end{align}
where $\prec$  and $\preceq$ indicate  componentwise inequalities.

It is worth mentioning that $\nu_k$ exhibits the scaling property:
\begin{equation}  \label{e:homo.exp}
\nu_k(sA) = s^{-(\alpha k - d)} \nu_k(A)
\end{equation}
for all $s>0$ and measurable  $A$. We shall also need  one additional function. For 
  $\bx = (x_1,\dots,x_k) \in (\bbr^d)^k$, $r>0$,  we define
\begin{equation} 
 \label{e:comp.func}
p(\bx; r) \ \definedas \ \int_{\bigcup_{i=1}^k B(x_i; \, r)} f(z) \,dz.
\end{equation}

We now have all  we need to formulate  the following result. The proof is given in the Appendix. 

\begin{theorem}  \label{t:main.heavy}
Let $(X_i, \, i \geq 1)$ be a sequence of i.i.d. $\bbr^d$-valued spherically symmetric random variables with density $f$ having regularly varying tail as in \eqref{e:RV.tail}. Let $h_n\:(\bbr^d)^k \to \{ 0,1 \}$ be a sequence of  indicators as in  \eqref{e:indep}, satisfying the shift invariance of \eqref{e:invariance1} and the points in proximity condition \eqref{e:close.enough}. Then, the point processes $N_n^{(k)}$ and $\tildeN_n^{(k)}$ given by \eqref{e:def.pp.heavy2} and \eqref{e:def.pp.heavy1} weakly converge to $N^{(k)}$ in the space $M_p(E_k \times L_k)$. 
\end{theorem}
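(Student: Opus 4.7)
The plan is to invoke Theorem \ref{t:general.thm} with $c_{k,n}=R_{k,n}$ and $d_{k,n}\equiv 0$, so that $X_i^{(n)}=X_i/R_{k,n}$. The shift invariance \eqref{e:invariance1} and the points in proximity condition \eqref{e:close.enough} are already hypotheses, so what remains is to verify \eqref{e:conv.in.exp}, \eqref{e:anti.cluster}, and \eqref{e:component}. First observe that \eqref{e:normalizing.heavy}, combined with $\alpha>d$ and the fact that $r_n$ is non-increasing, forces $R_{k,n}\to\infty$ and, crucially, $r_n/R_{k,n}\to 0$; this ratio governs every asymptotic bound below.

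To verify \eqref{e:conv.in.exp}, apply shift invariance to write $h_n(x_1,\dots,x_k)=h(0,(x_2-x_1)/r_n,\dots,(x_k-x_1)/r_n)$ and then change variables via $y_0=x_1/R_{k,n}$ and $y_i=(x_{i+1}-x_1)/r_n$ for $i=1,\dots,k-1$, whose Jacobian is $R_{k,n}^{d}r_n^{d(k-1)}$. The density factor becomes $f(R_{k,n}y_0)\prod_{i=1}^{k-1}f(R_{k,n}y_0+r_n y_i)$; after dividing by $f(R_{k,n}e_1)^k$, the regular variation \eqref{e:RV.tail} combined with spherical symmetry and $r_n/R_{k,n}\to 0$ produces pointwise convergence to $\|y_0\|^{-\alpha k}$ on $\{y_0\neq 0\}$, while Potter's bounds supply an integrable majorant on sets bounded away from the origin (condition \eqref{e:close.enough} keeps $y_1,\dots,y_{k-1}$ bounded). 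Multiplying by ${n\choose k}\sim n^k/k!$ and invoking \eqref{e:normalizing.heavy} yields the limit
\[
\frac{1}{k!}\int_{(\real^d)^{k-1}} h(0,y_1,\dots,y_{k-1})\,dy_1\cdots dy_{k-1}\int \|y_0\|^{-\alpha k}\,\indic\{(y_0,\dots,y_0)\in\,\cdot\,\}\,dy_0,
\]
which matches \eqref{e:def.nu.heavy}; the collapse $X_{j+1}^{(n)}=y_0+(r_n/R_{k,n})y_j\to y_0$ explains why all $k$ coordinates are identified with the single variable $x$ appearing in \eqref{e:def.nu.heavy}.

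For the sparsity condition \eqref{e:anti.cluster} with $l\in\{1,\dots,k-1\}$, the two $k$-tuples share $X_1$, so condition \eqref{e:close.enough} forces both clusters to sit within $O(r_n)$ of the common spatial centre $y_0=X_1/R_{k,n}$. Introducing coordinates $y_i=(X_{i+1}-X_1)/r_n$ ($i=1,\dots,k-1$) and $z_j=(X_{k+j}-X_1)/r_n$ ($j=1,\dots,k-l$) produces a Jacobian $R_{k,n}^{d}r_n^{d(2k-l-1)}$ and a density asymptotic to $f(R_{k,n}e_1)^{2k-l}\|y_0\|^{-\alpha(2k-l)}$, while compactness of $K\subset E_{2k-l}$ bounds the $y_0$-integral; using \eqref{e:normalizing.heavy} to eliminate $n^{2k-l}f(R_{k,n}e_1)^{2k-l}$ collapses the full prefactor to a constant multiple of $(r_n/R_{k,n})^{d(k-l)/k}$, which vanishes as $k-l\geq 1$. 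For the global separation condition \eqref{e:component}, conditional on $\X_k$ the independent copy $\mathcal{P}_n^{\prime}$ places no atom within $r_n$ of any $X_i$ with probability $\exp(-n\,p(\X_k;r_n))$ where $p$ is given by \eqref{e:comp.func}; a uniform estimate gives $p(\X_k;r_n)\leq C r_n^{d}f(R_{k,n}e_1)$ in the scaling window, and a further application of \eqref{e:normalizing.heavy} yields $n r_n^{d}f(R_{k,n}e_1)=(r_n/R_{k,n})^{d/k}\to 0$. Hence the isolation probability tends to one uniformly, and \eqref{e:component} is immediate from \eqref{e:conv.in.exp}.

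The main technical obstacle is promoting the pointwise regular-variation limit of the density ratio into an $n$-uniform integrable-majorant form of convergence. Potter's inequalities furnish, for any $\eta>0$ and uniformly in large $n$, a bound of the form $f(R_{k,n}y_0+r_n y_i)/f(R_{k,n}e_1)\leq C(\|y_0\|^{-\alpha-\eta}\vee\|y_0\|^{-\alpha+\eta})$ whenever $y_0$ is bounded away from the origin and $\|y_i\|\leq M$; the product of $k$ such factors is integrable over any rectangle bounded away from $\mathbf 0$ precisely because $\alpha k>d$, so dominated convergence applies. This step, standard in spirit within the regular-variation literature, is where the spherical symmetry of $f$ and the assumption $\alpha>d$ both enter in an essential way.
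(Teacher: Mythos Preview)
Your proposal is correct and follows essentially the same route as the paper: both reduce to Theorem~\ref{t:general.thm} with $c_{k,n}=R_{k,n}$, $d_{k,n}=0$, verify \eqref{e:conv.in.exp}, \eqref{e:anti.cluster}, \eqref{e:component} via the change of variables $x_1\mapsto$ (centre), $x_i\mapsto x_1+r_n y_{i-1}$, appeal to regular variation for the pointwise limit, and use Potter bounds for an integrable majorant. The only cosmetic difference is that the paper passes to polar coordinates $(\rho,\theta)$ for the centre variable whereas you keep the Cartesian $y_0=x_1/R_{k,n}$; your explicit rates $(r_n/R_{k,n})^{d(k-l)/k}$ for \eqref{e:anti.cluster} and $(r_n/R_{k,n})^{d/k}$ for the isolation probability are exactly what \eqref{e:normalizing.heavy} yields and match the paper's asymptotics.
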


To make the implications of the theorem a little more transparent, consider the simple  case for which 
\begin{equation}  \label{e:heavy.example}
 f(x) \  = \  \frac{C}{ 1 + \|x\|^{\alpha} },
\end{equation}
  and assume that $r_n = n^{s}$ with $-k/d(k-1) < s \leq 0$. Then, \eqref{e:normalizing.heavy} reduces to $n^{k+sd(k-1)} C^k R_{k,n}^{d-\alpha k} \to 1$ and solving this  with respect to $R_{k,n}$ gives
  $$
   R_{k,n} = C^{(\alpha - d/k)^{-1}} n^{[1+sd(1-k^{-1})] / (\alpha - d/k)}, 
  $$ 
  which in turn implies $\dots << R_{k,n} << R_{k-1,n} << \dots << R_{2,n}$. 

Being more specific on the geometric side, consider the geometric graph example of  \eqref{e:geometric.graph}, where connected graphs $\Gamma_k$ with $k$ vertices are fixed for $k=2,3,\dots$. Then the theorem  implies  that 
 $\bbr^d$ can be divided into annuli of increasing radii, and the $\Gamma_k$, $k=2,3,\dots$ are (asymptotically)  distributed in these annuli in a very specific fashion. Letting $\text{Ann}(K,L)$ denote an annulus of outer radius $L$ and inner radius $K$, we have (in an asymptotic sense)
\begin{itemize}
\item Inside Ann$(R_{2,n},\infty)$, there are finitely many $\Gamma_2$, but  none of $\Gamma_3, \Gamma_4, \dots$.
\item Inside Ann$(R_{3,n},R_{2,n})$, there are infinitely many $\Gamma_2$ and finitely many $\Gamma_3$,  but none of 
 $\Gamma_4, \Gamma_5, \dots$.
\end{itemize}
In general, 
\begin{itemize}
\item Inside Ann$(R_{k,n},R_{k-1,n})$, there are infinitely many $\Gamma_2, \dots, \Gamma_{k-1}$ and finitely many $\Gamma_k$,  but none of  $\Gamma_{k+1}, \Gamma_{k+2}, \dots$.
\end{itemize}
\begin{figure}[!t]
  \label{f:layer1}
\includegraphics[width=12.5cm]{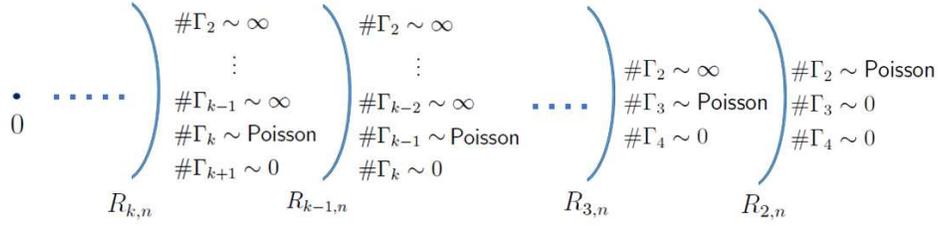}
\caption{Layer structure of annuli for random geometric graphs. For the density \eqref{e:heavy.example}, $R_{k,n}$ is a regularly varying sequence with exponent $[1+sd(1-k^{-1})] / (\alpha - d/k)$. The number of subgraphs isomorphic to $\Gamma_k$ outside $B(0; R_{k,n})$ is approximately Poisson.}
\end{figure}

\section{Light Tail Cases}  \label{s:light.tail}

This section treats  point process convergence when the underlying density on $\bbr^d$ possesses a relatively lighter tail than in the previous section. Typically, in the spirit of extreme value theory, light-tailed densities can be formulated by the so-called von Mises function. In particular, in a one-dimensional case ($d=1$), the von Mises function plays a decisive role in a characterisation of the max-domain of attraction of the Gumbel law. See Proposition 1.4 in \cite{resnick:1987}. The density formulation similar to our setup can be found, for example in \cite{balkema:embrechts:2004} and \cite{balkema:embrechts:2007}. 

Our results for the light tailed case fall into two categories. In the first, we have results more or less paralleling those of the previous section, although the normalisations and limiting distributions are somewhat different.    Nevertheless, the `annuli structure' in the preceding section carries through. This case is treated in Section \ref{subsec:light:annuli}.   Recall, however, that in the heavy tail case these annuli are rather thick, especially when compared the typical size of the geometric objects, which is of order $r_n$.  In the light tail scenario, this is not the case, and the annuli are actually quite thin, which leads to questions that   are not present in the heavy tail case. These questions are posed and solved  in Section \ref{subset:light:core}.

\subsection{Annuli} \label{subsec:light:annuli}
The light tail case considers $(X_i, \, i \geq 1)$ of  $\bbr^d$-valued i.i.d. spherically symmetric random variables with density given by
\begin{equation}  \label{e:density.light}
f(x) = L \bigl( \|x\| \bigr) \exp \bigl\{ -\psi \bigl( \|x\| \bigr) \bigr\}\,, \ \ x \in \bbr^d.
\end{equation}
Here, $\psi\: \bbr_+ \to \bbr$ is a $C^2$ function of von Mises type, in  that
$$
\psi^{\prime}(z) > 0, \quad \psi(z) \to \infty, \quad \bigl( 1/\psi^{\prime} \bigr)^{\prime}(z) \to 0
$$
as $z \to z_{\infty}$,  for some $z_\infty  \in (0,\infty]$. The main objects of this paper are the geometric objects far away from the origin, so the bounded support of the density (i.e., $z_{\infty} < \infty$) is relatively less interesting to us. Therefore, in what follows, we restrict to the unbounded support case, in which $z_{\infty} = \infty$. For notational convenience, we introduce  the function 
$$a(z) = \frac{1}{\psi^{\prime}(z)}.$$
 Since $a^{\prime}(z) \to 0$ as $z \to \infty$, the Ces\`{a}ro mean of $a^{\prime}$ converges as well. That is, 
$$
\frac{a(z)}{z} = \frac{1}{z} \int_0^z a^{\prime}(r)dr \to 0\,, \ \ \text{as } z \to \infty\,.
$$
Suppose that the function  $L:\bbr_+ \to \bbr_+$  in \eqref{e:density.light} is \textit{flat} for $a$; viz.
\begin{equation}  \label{e:flat}
\frac{L \bigl( t + a(t) v \bigr)}{L(t)} \to 1 \ \ \text{as } t \to \infty \ \text{uniformly on bounded } v\text{-sets}.
\end{equation}
This assumption implies that  $L$ is negligible in its tail, and hence the tail behaviour  of $f$ is determined  only by $\psi$. Here, we need to introduce an extra regularity condition on $L$. Assume that there exist $\gamma \geq 0$, $z_0 > 0$, and $C \geq 1$ such that 
\begin{equation}  \label{e:poly.upper}
\frac{L(zt)}{L(z)} \leq C\, t^{\gamma} \ \ \text{for all } t > 1, \, z \geq z_0\,.
\end{equation}
In view of Corollary 2.0.5 in \cite{bingham:goldie:teugels:1987} (or Theorem 2.0.1 there), 
\begin{equation}  \label{e:poly.growth}
\limsup_{z \to \infty} \frac{L(zt)}{L(z)} < \infty \ \ \text{for all } t > 1
\end{equation}
suffices for \eqref{e:poly.upper}. Observe that if $L$ is a polynomial function, then both \eqref{e:flat} and \eqref{e:poly.growth} are satisfied. On the other hand, \eqref{e:poly.growth} does not hold if $L$ grows exponentially.

Given a non-increasing sequence $(r_n, \, n \geq 1)$ of positive numbers (as usual constant $r_n$ is permissible), we define $h_n$ and $g_n$ as in the previous sections. Suppose that there exists a non-decreasing sequence $(R_{k,n}, \, n \geq 1)$ determined by 
\begin{equation}  \label{e:normalizing.light}
n^k r_n^{d(k-1)} \, a(R_{k,n})\, R_{k,n}^{d-1}\, f(R_{k,n}e_1)^k \to 1, \ \ \ n \to \infty\,.
\end{equation}
As pointed out in Remark \ref{re:no.rapid.decay.heavy}, note that a rapid decay of $(r_n)$, e.g., $n^k r_n^{d(k-1)} \to 0$, $n \to \infty$ is not permissible. 
For $k \geq 2$, using abbreviations \eqref{e:abb.X1}, \eqref{e:abb.X2}, and \eqref{e:abb.X3}, so that
$$
c_{k,n} = a(R_{k,n})\,, \quad \quad d_{k,n} = R_{k,n}\,,
$$ 
we shall consider the following point processes in the space $M_p(E_k \times L_k)$ with $E_k = \bigl((-\infty,\infty]^d\bigr)^k$ and $L_k$ given by  \eqref{e:upper.diagonal}: 
\begin{equation}  \label{e:def.pp.light2}
\regN_n^{(k)}(\cdot) = \sum_{\mathbf{i} \in \IPk} h_n(\mathcal{X}_{\bi}) \ \epsilon_{\bigl( \Xsupn_\bi, \ \mathbf{i} /|\mathcal{P}_n| \bigr)} (\cdot)\,.
\end{equation}
and 
\begin{equation}  \label{e:def.pp.light}
\tildeN_n^{(k)}(\cdot) = \sum_{\mathbf{i} \in \IPk} g_n(\mathcal{X}_{\bi}, \mathcal{P}_n) \ \epsilon_{\bigl( \Xsupn_\bi, \ \mathbf{i} /|\mathcal{P}_n| \bigr)} (\cdot)\,,
\end{equation}

As seen below in more detail, the nontriviality of weak limits of $N_n^{(k)}$ and $\tildeN_n^{(k)}$ mainly depends on the limit value of $a(R_{k,n})/r_n$. Indeed, if $a(R_{k,n})/r_n$ has a non-zero limit, then $N_n^{(k)}$ and $\tildeN_n^{(k)}$ both converge to the same Poisson random measure. On the contrary, if $a(R_{k,n})/r_n \to 0$, then $\regN_n^{(k)}$ goes to $0$ in probability (and of course, so does $\tildeN_n^{(k)}$), which implies that the geometric objects implicit in $h_n$ cannot be observed outside $B(0; R_{k,n})$, regardless of whether or not those objects are isolated from other points. 

 The following is the main limit theorem of this subsection. The proof is deferred to the Appendix. 

\begin{theorem}  \label{t:main.light}
Let $(X_i, \, i \geq 1)$ be a sequence of i.i.d. $\bbr^d$-valued spherically symmetric random variables with density $f$ provided in \eqref{e:density.light}. Let $h_n\:(\bbr^d)^k \to \{ 0,1 \}$ be a sequence of indicators as in  \eqref{e:indep}, satisfying the shift invariance of \eqref{e:invariance1} and the points in proximity condition \eqref{e:close.enough}. \\
$(i)$ If $a(R_{k,n})/r_n \to c \in (0,\infty]$ as $n \to \infty$, then the point processes $\regN_n^{(k)}$ and $\tildeN_n^{(k)}$ given by \eqref{e:def.pp.light2} and \eqref{e:def.pp.light} converge  weakly to a Poisson random measure with intensity $\nu_k \times \lambda_k$. Here, $\lambda_k$ is the $k$-dimensional Lebesgue measure concentrated on $L_k$, and the measure $\nu_k$ is given by 
\begin{align}
 \label{e:intensity.finite} 
\nu_k &\bigl( (a_0, b_0] \times \dots \times (a_{k-1}, b_{k-1}] \bigr) \\
&\quad \, = \frac{1}{k!} \int_{(\bbr^d)^{k-1}} \int_{\theta \succeq 0} \int_0^{\infty} \one \bigl(a_0 \prec \rho \theta \preceq b_0 \bigr) \notag  \\
&\qquad\qquad \, \times \one \Bigl( a_i \prec \bigl(\rho + c^{-1} \langle \theta,y_i \rangle \bigr)\theta \preceq b_i, \ i=1,\dots,k-1 \Bigr)  \notag  \\
&\qquad\qquad\qquad \, \times \exp \Bigl\{ -k\rho - c^{-1} \sum_{i=1}^{k-1} \langle \theta, y_i \rangle \Bigr\}\, h(0,\by)\, d\rho J(\theta) d\theta d\by,  \notag
\end{align}
where $a_i$, $b_i$, $i=0,\dots,k-1$ are $d$-dimensional real vectors with $-\infty \prec a_i \preceq b_i \preceq \infty$, and 
$J(\theta) = |\partial x / \partial \theta|$ is the Jacobian
\beq
\label{J:def}
J(\theta) \ = \  \sin^{k-2}(\theta_1) \, \sin^{k-3}(\theta_2) \, \dots\, \sin (\theta_{k-2}).
\eeq 
$(ii)$ Suppose $a(R_{k,n})/r_n \to 0$ as $n \to \infty$ and $a$ is eventually non-increasing (i.e., $a$ is non-increasing on $(x_0,\infty)$ for some large $x_0$). Then, the point processes $\regN_n^{(k)}$ and $\tildeN_n^{(k)}$ converge to $0$ in probability.  
\end{theorem}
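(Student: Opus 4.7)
The plan is to invoke Theorem \ref{t:general.thm} by verifying its three hypotheses---vague convergence \eqref{e:conv.in.exp}, sparsity \eqref{e:anti.cluster}, and global separation \eqref{e:component}---for case $(i)$, and by a direct first-moment estimate for case $(ii)$. The linchpin of the analysis is a chain of changes of variables that exposes the exponential tail of $f$ through the von Mises structure of $\psi$.

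For vague convergence in case $(i)$, I would start from
\begin{equation*}
\binom{n}{k}\P\!\left\{h_n(\X_k)=1,\,\Xsupnk\in A\right\}=\binom{n}{k}\!\int_{(\bbr^d)^k} h_n(\bx)\prod_{i=1}^k f(x_i)\,\one(x_i^{(n)}\in A_i)\,d\bx,
\end{equation*}
substitute $y_i=(x_i-x_1)/r_n$ for $i=2,\dots,k$ (Jacobian $r_n^{d(k-1)}$) to replace $h_n(x_1,\dots,x_k)$ by $h(0,y_2,\dots,y_k)$ via \eqref{e:invariance1}, pass to polar coordinates $x_1=\rho\theta$ with $\theta\in S^{d-1}$ restricted to the positive orthant (using spherical symmetry, and yielding the Jacobian $J(\theta)$), and finally set $\rho=R_{k,n}+a(R_{k,n})u$. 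The overall prefactor collapses to $\binom{n}{k}r_n^{d(k-1)}\,a(R_{k,n})\,R_{k,n}^{d-1}\,f(R_{k,n}e_1)^k\to 1/k!$ by \eqref{e:normalizing.light}. For the integrand, the expansion $\|x_i\|-R_{k,n}=a(R_{k,n})u+r_n\langle\theta,y_i\rangle+O(r_n^2/R_{k,n})$, combined with $r_n/a(R_{k,n})\to 1/c$, yields both $x_i^{(n)}\to (u+c^{-1}\langle\theta,y_i\rangle)\theta$ and, via the von Mises expansion $\psi(R_{k,n}+a(R_{k,n})w)-\psi(R_{k,n})\to w$ together with the flatness \eqref{e:flat} of $L$, the ratio $f(x_i)/f(R_{k,n}e_1)\to \exp\!\left(-u-c^{-1}\langle\theta,y_i\rangle\right)$. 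Dominated convergence, with the polynomial bound \eqref{e:poly.upper} providing an integrable envelope, produces the intensity $\nu_k$ stated in \eqref{e:intensity.finite}.

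For the sparsity condition \eqref{e:anti.cluster}, I would note that $h_n(\X_k)=h_n(X_1,\dots,X_l,X_{k+1},\dots,X_{2k-l})=1$ together with \eqref{e:close.enough} forces all $2k-l$ points to lie within $O(r_n)$ of one another through the shared indices $\{1,\dots,l\}$. An analogous change-of-variable calculation yields an upper bound of order $(n\,f(R_{k,n})\,r_n^d)^{k-l}$; invoking \eqref{e:normalizing.light} once more, $n\,f(R_{k,n})\,r_n^d=[r_n^d/(a(R_{k,n})R_{k,n}^{d-1})]^{1/k}\to 0$ since $r_n/a(R_{k,n})$ is bounded and $R_{k,n}\to\infty$. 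The global separation condition \eqref{e:component} requires that no point of an independent Poisson copy $\mathcal P_n'$ falls within $r_n$ of $\X_k$; by the Poisson void formula this conditional probability equals $\exp(-n\,p(\X_k;r_n))$ with $p$ as in \eqref{e:comp.func}, and $n\,p(\X_k;r_n)\lesssim n\,r_n^d\, f(R_{k,n})\to 0$ by the same computation, so this factor tends to $1$ uniformly on compacts and the limit $\nu_k$ is unchanged.

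For case $(ii)$, the same first-moment calculation applies, but now $r_n/a(R_{k,n})\to\infty$, so the constraint $\Xsupnk\in K$ on a compact $K\subset E_k$ forces $|\langle\theta,y_i\rangle|=O(a(R_{k,n})/r_n)\to 0$ for each $i\geq 2$. The effective integration volume for each such $y_i$ therefore shrinks by a factor $a(R_{k,n})/r_n$, giving $\E[\regN_n^{(k)}(K)]=O\!\left((a(R_{k,n})/r_n)^{k-1}\right)\to 0$; Markov's inequality and the domination $\tildeN_n^{(k)}\leq \regN_n^{(k)}$ complete the argument. The eventual monotonicity of $a$ is used here to control $f(\rho)$ uniformly for $\rho$ slightly less than $R_{k,n}$ (i.e.\ when $u<0$). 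The main obstacle throughout is securing uniform control of both the density ratios and the polar perturbation $S(x_i)=\theta+O(r_n/R_{k,n})$ so as to justify dominated convergence in the computations above; the combination of \eqref{e:flat} and \eqref{e:poly.upper} is precisely what provides the required integrable envelope.
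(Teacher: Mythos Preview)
Your overall architecture for case $(i)$ is the paper's: verify \eqref{e:conv.in.exp}, \eqref{e:anti.cluster}, \eqref{e:component} via the substitutions $x_i\mapsto x_1+r_ny_i$, polar coordinates, and $r=R_{k,n}+a(R_{k,n})\rho$, then apply dominated convergence. However, the sentence ``dominated convergence, with the polynomial bound \eqref{e:poly.upper} providing an integrable envelope'' hides a real difficulty. The bound \eqref{e:poly.upper} controls only the slowly varying factor $L$; it says nothing about the exponential factor $\exp\{-\psi(R_{k,n}+a(R_{k,n})\rho)+\psi(R_{k,n})\}$ for large $\rho$. In case $(i)$ the function $a$ is \emph{not} assumed eventually monotone, so the naive bound $\psi(R_{k,n}+a(R_{k,n})\rho)-\psi(R_{k,n})\geq\rho$ is unavailable. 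The paper closes this gap with a separate lemma (due to Balkema--Embrechts): setting $q_m(n)=a(R_{k,n})^{-1}\bigl(\psiinv(\psi(R_{k,n})+m)-R_{k,n}\bigr)$, one has $q_m(n)\le \epsilon^{-1}e^{m\epsilon}$ for $n\ge N_\epsilon$, which yields the envelope $\sum_{m\ge 0}e^{-m}\one(0\le\rho\le\epsilon^{-1}e^{(m+1)\epsilon})\cdot(\rho\vee1)^{d-1+\gamma k}$, integrable provided $\epsilon<(d+\gamma k)^{-1}$. Without this device, your DCT step is not justified.

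Your argument for case $(ii)$ has a more serious gap. You claim that compactness of $K\subset E_k$ forces the two-sided bound $|\langle\theta,y_i\rangle|=O(a(R_{k,n})/r_n)$. But here $E_k=((-\infty,\infty]^d)^k$, and relatively compact sets in this space are characterised only by \emph{lower} bounds on the coordinates; the points $+\infty$ belong to $E_k$. Consequently the constraint $\Xsupnk\in K$ translates, after the change of variables, merely into $\rho\ge -B_2$ and $\rho+\xi_n(\rho,\theta,y_i)\ge -B_2$ for some $B_2>0$, where $\xi_n\sim (r_n/a(R_{k,n}))\langle\theta,y_i\rangle$. When $\langle\theta,y_i\rangle>0$ this constraint is asymptotically vacuous, so the $y_i$-volume does \emph{not} shrink. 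The paper's argument is instead a dichotomy applied inside the integrand: if $\langle\theta,y_i\rangle<0$ then $\rho+\xi_n\to-\infty$ and the indicator $\one(\rho+\xi_n\ge -B_2)$ vanishes; if $\langle\theta,y_i\rangle>0$ then the density ratio $f(\|x_i\|e_1)/f(R_{k,n}e_1)\sim e^{-(\rho+\xi_n)}\to 0$ provides the decay. The eventual monotonicity of $a$ is used precisely to bound this density ratio by $e^{-(\rho+\xi_n)}$ and thereby furnish a DCT envelope. Your volume-shrinking heuristic captures only the first half of this dichotomy.
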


As was the case in the heavy tailed scenario, Theorem \ref{t:main.light} $(i)$  again implies the layer structure of annuli at different radii $\dots << R_{k,n} << R_{k-1,n} << \dots << R_{2,n}$ in which different structures can be found. In the current case, however, this is not as immediate, since now the normalisation also involves a translation, the asymptotic effect of which is to count geometric events only when they occur in the nonnegative orthant. 

Recall that we are particularly interested in the distribution of geometric events outside the ball $B(0; R_{k,n})$, $k=2,3,\dots$, and this  is captured by
$$
\sum_{\bi \in \IPk} g_n(\mathcal{X}_{\bi}, \mathcal{P}_n)\, \one \bigl( \|X_{i_j}\| \geq R_{k,n}, \ j=1,\dots,k\, \bigr)\,.
$$
However, by the spherical symmetry of the underlying  density, along with the independence of the sample points,   the weak limit of this quantity is the same as that of 
$$
2^d \sum_{\bi \in \IPk} g_n(\mathcal{X}_{\bi}, \mathcal{P}_n)\, \epsilon_{\Xsupn_\bi} \Bigl( \bigl( [0,\infty]^d \bigr)^k \Bigr),
$$
the limit of which can be computed directly from Theorem \ref{t:main.light} $(i)$. In other words, the spherical symmetry of a density allows one to extend the basic result on the weak convergence occurring in the nonnegative orthant to that in all orthants. Consequently we can obtain the same qualitative separation of geometric events into distinct annuli that we saw in the heavy tailed case. 

To see how this works, consider the simple example  $f(x) = C e^{-\|x\|^{\tau}/\tau}$, for some $0 < \tau \leq 1$, and  take $r_n\equiv 1$. Then, $a(z) = z^{1-\tau}$ clearly has a non-zero limit as $z \to \infty$, and so Theorem \ref{t:main.light} $(i)$ applies. Then, 
\eqref{e:normalizing.light} becomes  $$n^k R_{k,n}^{d-\tau} C^k e^{-k R_{k,n}^{\tau} / \tau} \to 1,$$ and the solution 
\begin{equation}  \label{e:log.grow.ex}
 R_{k,n} = \bigl( \tau \log n + k^{-1}(d-\tau) \log (\tau \log n) + \tau \log C \bigr)^{1/\tau} 
\end{equation}
 grows only logarithmically in $n$, whereas the $R_{k,n}$ of the previous section exhibited, essentially, power law growth. 
Thus, the description inherent in Figure \ref{f:layer1} remains unchanged, except for the change in the values of $R_{k,n}$.

%\begin{figure}[!t]
%  \label{f:layer2}
%\includegraphics[width=12.5cm]{layer.eps}
%\caption{{\color{red} Layer structure of annuli for random geometric graphs. $R_{k,n}$ grows logarithmically in $n$. The number of subgraphs isomorphic to $\Gamma_k$ outside $B(0; R_{k,n})$ is approximated by a Poisson distribution.}}
%\end{figure}
\begin{remark}
In particular, if $a(R_{k,n})/r_n \to \infty$, the limiting intensity measure $\nu_k$ can be simplified to 
\beq  \label{e:intensity.infty}
&&\nu_k \bigl( (a_0, b_0] \times \dots \times (a_{k-1}, b_{k-1}] \bigr) 
\\ &&\qquad \quad =\ \frac{1}{k!} \int_{(\bbr^d)^{k-1}} h(0,\by) d\by \iint_{a_i \prec \rho \theta \preceq b_i, \ i=0,\dots,k-1, \ \theta \succeq 0} \hspace{-20pt} e^{-k\rho} d\rho J(\theta) d\theta\,,   \notag
\eeq
where $a_i$, $b_i$, $i=0,\dots,k-1$ are $d$-dimensional vectors such that $-\infty \prec a_i \preceq b_i \preceq \infty$. 
\end{remark}
\begin{remark}
In the case of $k=1$, the point process $\regN_n^{(1)}$ is no longer associated with the indicator $h_n$ and is given by
$$
\regN_n^{(1)}(\cdot) = \sum_{i=1}^{|\mathcal{P}_n|} \epsilon_{\bigl(  a(R_{1,n})^{-1} (X_i - R_{1,n} S(X_i))\,, \ i /|\mathcal{P}_n| \bigr)} (\cdot)\,.
$$
Interestingly, a simpler argument than the proof of Theorem \ref{t:main.light} (or combining  standard arguments of point process theory --  e.g.\ Chapters 5 and 6 of \cite{resnick:2007}  -- with the Palm theory in the Appendix) shows that regardless of whether $a(R_{1,n})$ has a zero or a non-zero limit, $\regN_n^{(1)}$  converges weakly  to a Poisson random measure with a non-trivial intensity. 
\end{remark}

Before concluding this subsection, we look at  several examples of light-tailed densities for which the corresponding point processes exhibit  different limiting behaviours. 
\begin{example}
Suppose,  for simplicity, that $r_n \equiv 1$ and consider the following probability densities on $\bbr^d$: 
\begin{align*}
f_1(x) &= L_1 \bigl( \|x\| \bigr) e^{-\|x\|^{\tau}}\,, \ \ 0 < \tau < 1\,, \ \ x \in \bbr^d\,,  \\     
f_2(x) &= L_2 \bigl( \|x\| \bigr) e^{-\|x\| \log \log \|x\| / \log \|x\|}\,,  \ \ x \in \bbr^d\,,
\end{align*}
where the $L_1$ and $L_2$  satisfy \eqref{e:flat} and \eqref{e:poly.upper}. The densities $f_1$ and $f_2$ are usually referred to as subexponential densities, since their tails decay more slowly than that of  an exponential distribution. For  both $f_1$ and $f_2$, it is easy to check that $a(z) \to \infty$ as $z \to \infty$, so, by Theorem \ref{t:main.light} $(i)$, the point processes $\regN_n^{(k)}$ and $\tildeN_n^{(k)}$ weakly converge to a Poisson random measure with intensity $\nu_k \times \lambda_k$, where $\nu_k$ is given by \eqref{e:intensity.infty}. 
On the other hand, if the density has the same tail as an exponential distribution; for example,
$$
f_3(x) = L_3 \bigl( \|x\| \bigr) e^{-\|x\|}\,, \ \ x \in \bbr^d\,,
$$
then $\psi(z) = z$ and $a(z) = 1$. In this case, $\regN_n^{(k)}$ and $\tildeN_n^{(k)}$ once again weakly converge to a Poisson random measure. However, its intensity measure $\nu_k \times \lambda_k$ is more complicated, where $\nu_k$ is given by \eqref{e:intensity.finite} with $c=1$. We shall also consider the densities with more rapidly decaying tails than an exponential distribution (they are sometimes called superexponential densities). Two examples of superexponential densities are 
\begin{align*}
f_4(x) &= L_4 \bigl( \|x\| \bigr)e^{-\|x\|^{\tau}}\,, \ \ \tau > 1\,, \ \ x \in \bbr^d\,, \\     
f_5(x) &= L_5 \bigl( \|x\| \bigr) e^{-\|x\| \log \|x\| / \log \log \|x\|}\,, \ \ x \in \bbr^d\,.
\end{align*}
For $f_4$ and $f_5$, it follows that $a(z) \to 0$ as $z \to \infty$, and so Theorem \ref{t:main.light} $(ii)$ implies  that the point process $\regN_n^{(k)}$ goes to $0$ in probability. The densities $f_2$, $f_3$, and $f_5$ differ only slightly in their tail behaviours, but the corresponding point processes possess totally different limits. Finally, we point out that even for $f_4$ and $f_5$, if one chooses $(r_n, \, n \geq 1)$ so that $a(R_{k,n})/r_n \to c \in (0,\infty]$, then the point process can converge to a non-trivial Poisson random measure. 
\end{example}

\subsection{At the annuli boundaries}
\label{subset:light:core}
The claim of Part $(ii)$  of  Theorem \ref{t:main.light}  is that,   when $a(R_{k,n})/r_n\to 0$,  the geometric objects being counted do not exist outside of the ball $B(0; R_{k,n})$, at least from the view of the point process convergence. We next want to explore the existence of the same objects inside $B(0; R_{k,n})$, under the condition that these objects must be isolated  from other random points by at least $r_n$. This question was partially and negatively answered in \cite{adler:bobrowski:weinberger:2014}, in the framework of the asymptotics of the expected Betti numbers of the \v{C}ech complexes associated with a random sample and a unit radius. In particular it was shown there that for the standard Gaussian distribution, all the expected Betti numbers of order $k \geq 1$ vanish and the resulting \v{C}ech complex becomes contractible. In what follows, we continue working on the same question  under the conditions of Theorem \ref{t:main.light} $(ii)$ from a more comprehensive viewpoint. 

Theorem \ref{two:theorem} establishes the existence of two sequences of balls with different radii,  the smaller ones ultimately containing so many   points that they can be covered by a union of balls with radius $r_n$ and centred on the points. On the other hand, ultimately there are no  points outside a larger balls. The main point, however, is that  the differences between the radii of the two balls decays at the rate of $o(r_n)$. We conclude, therefore, that, at least when $n$ is large, a union of balls with radius $r_n$ centred at points in $\mathcal{P}_n$ becomes contractible, and accordingly, the geometric objects that we have been studying up until now, which are isolated from other points, fail to  exist anywhere in all of 
$\bbr^d$.

In order to get a clear picture, we shall  add extra assumptions to the setup of Theorem \ref{t:main.light} $(ii)$. In particular, we assume that 
\begin{align}
\psi &\in RV_{v} \ \ \text{for some } 1 < v < \infty\,,  \label{e:reg.psi}  \\[5pt]
L &\equiv C \ (\text{suitable normalising constant}).  \label{e:const.L}
\end{align}
and 
\begin{equation}  \label{e:reg.r}
(r_n) \text{ is a regularly varying sequence which decreases to $0$ as } n \to \infty\,.  
\end{equation}
The reason why we  need  $v>1$ in \eqref{e:reg.psi} is as follows. By Proposition 2.5 in \cite{resnick:2007}, which establishes the  regular variation of the derivative of a regularly varying function,  we now have 
$$
a(z) = 1/\psi^{\prime}(z) \in RV_{1-v}\,.
$$
The setup of Theorem \ref{t:main.light} $(ii)$ requires that  $a(z) \to 0$ as $z \to \infty$, so the regular variation exponent of $\psi$ cannot be less than $1$. Finally we observe that $\psiinv \in RV_{1/v}$ and $a \circ \psiinv \in RV_{(1-v)/v}$, and both functions are eventually monotone.
\begin{theorem}
\label{two:theorem}
Assume the conditions of Theorem \ref{t:main.light} $(ii)$, as well as  \eqref{e:reg.psi}, \eqref{e:const.L}, and \eqref{e:reg.r}. Furthermore, assume that 
\begin{equation}  \label{e:stronger.version}
\frac{a\circ \psiinv (\log n)}{r_n}\, \log \log n \to 0 \ \ \text{as } n \to \infty\,.
\end{equation}
Then, there exist two sequences $(R_n^{(0)}, \, n\geq 1)$ and $(R_n^{(1)}, \, n\geq 1)$ such that, as $n \to \infty$,
$$
\P \Biggl\{ \, B\bigl(0; R_n^{(0)} \bigr) \subset \hspace{-10pt} \bigcup_{X \in \mathcal{P}_n \cap B(0; R_n^{(0)})} \hspace{-15pt} B(X; r_n)\,, \ \ \ \mathcal{P}_n \cap B\bigl( 0; R_n^{(1)} \bigr)^c = \emptyset \, \Biggr\} \to 1\,,
$$
and
\begin{equation}  \label{e:diff.radii}
r_n^{-1} \bigl( R_n^{(1)} - R_n^{(0)} \bigr) \to 0\,.
\end{equation}
\end{theorem}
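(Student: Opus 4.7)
The plan is to define
\[
R_n^{(1)} := \psiinv(\log n + M\log\log n), \qquad R_n^{(0)} := \psiinv(\log n + d\log r_n - M\log\log n),
\]
for a large constant $M>0$ (to be fixed depending on $v$ and $d$), and then verify the three required conclusions separately. A preliminary observation is that \eqref{e:stronger.version} together with the regular variation assumptions \eqref{e:reg.psi} and \eqref{e:reg.r} forces $r_n$ to decay more slowly than any polynomial in $n$ (a polynomial $r_n$ would make $a\circ\psiinv(\log n)/r_n$ blow up, since $a\circ\psiinv(\log n)$ is only polylogarithmic). In particular $\log r_n = O(\log\log n)$, so $R_n^{(0)}, R_n^{(1)} \sim \psiinv(\log n)$.

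For the outer statement, $\P(\mathcal P_n \cap B(0;R_n^{(1)})^c = \emptyset) = \exp(-n\bar F(R_n^{(1)}))$, where $\bar F(R) = \P(\|X\| > R)$. A standard Laplace-type calculation (integration by parts, using $L\equiv C$ and the von Mises hypothesis on $\psi$) yields $\bar F(R) \sim s_{d-1} C R^{d-1} a(R) e^{-\psi(R)}$ as $R\to\infty$. Hence $n\bar F(R_n^{(1)}) \sim s_{d-1} C (R_n^{(1)})^{d-1} a(R_n^{(1)}) (\log n)^{-M}$, which tends to $0$ once $M$ exceeds the regular-variation exponent of the polylogarithmic prefactor. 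For the inner covering, cover $B(0;R_n^{(0)})$ by a lattice of balls $\{B(y_i;r_n/2)\}_{i\in I}$ with $|I| = O((R_n^{(0)}/r_n)^d)$ and with grid points $y_i$ chosen so that for each $i$ the intersection $B(y_i;r_n/2)\cap B(0;R_n^{(0)})$ has volume at least $c_d r_n^d$. Since $f\ge Ce^{-\psi(R_n^{(0)})}$ on $B(0;R_n^{(0)})$, the $f$-mass of this intersection is at least $c_d r_n^d Ce^{-\psi(R_n^{(0)})} = c_d C(\log n)^M/n$ by the choice of $R_n^{(0)}$. A Poisson tail estimate plus union bound then gives
\[
\P\bigl(\exists\, i : \mathcal P_n\cap B(y_i;r_n/2)\cap B(0;R_n^{(0)})=\emptyset\bigr) \le |I|\exp(-c_d C(\log n)^M) \to 0
\]
for $M$ large enough, and on this event every $z\in B(0;R_n^{(0)})$ lies within $r_n/2$ of some $y_i$, hence within $r_n$ of a point of $\mathcal P_n\cap B(0;R_n^{(0)})$, giving the required covering.

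Finally, by the Mean Value Theorem applied to $\psiinv$,
\[
R_n^{(1)} - R_n^{(0)} = a\bigl(\psiinv(\xi_n)\bigr)\cdot\bigl(2M\log\log n - d\log r_n\bigr),
\]
for some $\xi_n = \log n + O(\log\log n)$. The Potter bounds applied to the regularly varying function $a\circ\psiinv$ (of index $(1-v)/v$) give $a(\psiinv(\xi_n))\sim a(\psiinv(\log n))$, and combined with $\log r_n = O(\log\log n)$ this yields $R_n^{(1)}-R_n^{(0)} = O(a(\psiinv(\log n))\log\log n)$, which is $o(r_n)$ precisely by \eqref{e:stronger.version}. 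The main technical points to handle carefully are the multivariate tail asymptotic $\bar F(R) \sim s_{d-1} CR^{d-1} a(R) e^{-\psi(R)}$, which needs an angular integration on top of the one-dimensional von Mises formula, and the uniform-in-$i$ mass estimate for the lattice cover --- the latter is nontrivial because the condition $a(R_n^{(0)})/r_n\to 0$ implies $f$ varies dramatically across a ball of radius $r_n$, so one cannot work with the value of $f$ at the centre of $B(y_i;r_n/2)$ but must use the worst-case bound $f(R_n^{(0)})$ obtained from the fact that the whole intersection sits in $B(0;R_n^{(0)})$.
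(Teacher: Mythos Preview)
Your proposal is correct and follows essentially the same route as the paper's proof: define $R_n^{(0)}$ and $R_n^{(1)}$ as $\psiinv$ of $\log n$ plus lower-order corrections, use a grid/lattice covering argument with the worst-case density bound $f \ge C e^{-\psi(R_n^{(0)})}$ for the inner ball, a Laplace-type tail asymptotic for the outer ball, and the mean value theorem on $\psiinv$ combined with \eqref{e:stronger.version} for the difference. The only notable difference is cosmetic: the paper chooses bespoke correction terms (e.g.\ $-\log\log r_n^{-1}\psiinv(\log n)$ and $(d-1)\log\psiinv(\log n)+\log a\circ\psiinv(\log n)+\log\log n$) and then verifies each resulting limit separately, whereas you use a single blunt parameter $M\log\log n$ and absorb everything into the observation $|\log r_n|=O(\log\log n)$---a minor streamlining, but note that your parenthetical justification (``decays more slowly than any polynomial $\Rightarrow \log r_n=O(\log\log n)$'') is not quite right as stated; the correct argument is that eventually $r_n > a\circ\psiinv(\log n)$ by \eqref{e:stronger.version}, and $|\log a\circ\psiinv(\log n)|\sim \tfrac{v-1}{v}\log\log n$ by regular variation.
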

\begin{figure}[!t]
  \label{f:thm}
\includegraphics[width=7cm]{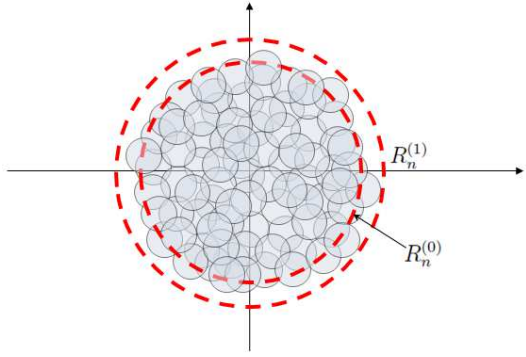}
\caption{ Asymptotically, the smaller ball of radius $R_n^{(0)}$ is covered by a union of balls with radius $r_n$ centred around the points in $\Pn$. There are no  points outside a larger ball with radius $R_n^{(1)}$. The difference between $R_n^{(1)}$ and $R_n^{(0)}$ vanishes at the rate of $o(r_n)$. }
\end{figure}
\begin{remark}
Typically, the solution to \eqref{e:normalizing.light} is given by $R_{k,n} = \psiinv(\log n + b_{k,n})$, where $b_{k,n}/\log n \to 0$ as $n \to \infty$; (cf.\ \eqref{e:log.grow.ex}.) Because of the regular variation of $a \circ \psiinv$, the condition $a(R_{k,n})/r_n \to 0$ in Theorem \ref{t:main.light} $(ii)$ is equivalent to $a \circ \psiinv(\log n)/r_n \to 0$. In essence, therefore,  assuming \eqref{e:stronger.version} adds a stronger condition to Theorem \ref{t:main.light} $(ii)$. We have not been able to determine whether or not the same result  can be obtained when $a \circ \psiinv(\log n)/r_n \to 0$ holds but \eqref{e:stronger.version} is no longer true.
\end{remark}
\begin{proof}
First of all, we claim that $(r_n) \in RV_0$. To see this, note that it will suffice to show that $\log r_n / \log n \to 0$ as $n \to \infty$, since $(r_n)$ is a regularly varying sequence. (cf.\  Proposition 2.6 $(i)$ in \cite{resnick:2007}.) Since $a \circ \psiinv \in RV_{(1-v)/v}$, it is clear that $a \circ \psiinv (\log n) \log n \to \infty$, and so, 
$$
\frac{\log r_n}{\log n} = \frac{a \circ \psiinv (\log n)}{r_n}\, \frac{r_n \log r_n}{a \circ \psiinv (\log n) \log n} \to 0\,, \ \ \ n \to \infty\,.
$$

For positive numbers $g$ and $\rho$, let $\mathcal{Q}_n(g, \rho)$ be a family of cubes with grid $gr_n$ that are contained in $B(0; \rho)$. Fix $g>0$ sufficiently  small so that  
$$
\bigl\{ \, Q \cap \mathcal{P}_n \neq \emptyset \ \, \text{for all } Q \in \mathcal{Q}_n(g, \rho)\, \bigr\} \subset \Bigl\{ \, B\bigl(0; \rho \bigr) \subset \bigcup_{X \in \mathcal{P}_n \cap B(0; \rho)} B(X; r_n) \, \Bigr\}
$$
for all $\rho>0$ and $n \geq 1$. We define $R_n^{(0)}$ and $R_n^{(1)}$ as follows:  
$$
R_n^{(0)} = \psiinv(A_n)\,, \ \ A_n = \log n + d \log r_n - \log \log r_n^{-1} \psiinv (\log n) - \delta\,,
$$
where $\delta$ is a positive constant such that
\begin{equation}  \label{e:rest.delta}
d - e^{\delta} g^d C < 0\,,
\end{equation}
and 
$$
R_n^{(1)} = \psiinv(B_n),
$$
where
$$ B_n = \log n + (d-1) \log \psiinv (\log n) + \log a \circ \psiinv (\log n) + \log \log n.
$$
We need to prove that, as $n \to \infty$,
\begin{equation}  \label{e:1st.event}
\P \bigl\{ \, Q \cap \mathcal{P}_n = \emptyset \ \, \text{for some } Q \in \mathcal{Q}_n(g, R_n^{(0)})\, \bigr\} \to 0\,, 
\end{equation}
and
\begin{equation}  \label{e:2nd.event}
\P \Bigl\{ \mathcal{P}_n \cap B \bigl( 0; R_n^{(1)} \bigr)^c = \emptyset \Bigr\} \to 1\,.
\end{equation}
The probability in \eqref{e:1st.event} is estimated from above by 
\beq
\sum_{Q \in \mathcal{Q}_n(g, R_n^{(0)})} \P\{Q \cap \mathcal{P}_n = \emptyset \} 
&=& \sum_{Q \in \mathcal{Q}_n(g, R_n^{(0)})} \exp \Bigl\{ -n \int_Q f(x) dx \Bigr\}
\notag  \\
\label{e:inner.circle}
&\leq& \sum_{Q \in \mathcal{Q}_n(g, R_n^{(0)})} \exp \bigl\{ -n (g r_n)^d f(R_n^{(0)}e_1) \bigr\}\\
 &\leq& \left( \frac{R_n^{(0)}}{gr_n} \right)^d \exp \bigl\{ -g^d nr_n^d f(R_n^{(0)}e_1) \bigr\}.  \notag
\eeq
By virtue of the inequality $R_n^{(0)} \leq \psiinv (\log n)$ and \eqref{e:rest.delta}, we have, as $n \to \infty$,
$$
d \log r_n^{-1} R_n^{(0)} - g^d nr_n^d f(R_n^{(0)}e_1) \leq ( d-e^{\delta} g^d C) \log r_n^{-1} \psiinv (\log n) \to -\infty
$$
from which the rightmost term in \eqref{e:inner.circle} vanishes as $n \to \infty$. 

Next, we   turn to proving \eqref{e:2nd.event}. Since 
$$
\P \Bigl\{ \mathcal{P}_n \cap B \bigl( 0; R_n^{(1)} \bigr)^c = \emptyset \Bigr\} = \exp \left\{ - n \int_{||x|| \geq R_n^{(1)}} f(x) dx \right\}\,,
$$
it is enough to show $n \int_{||x|| \geq R_n^{(1)}} f(x) dx \to 0$, as $n \to \infty$. By the polar coordinate transform with $J(\theta) = |\partial x / \partial \theta|$, we can write
\begin{align*}
n \int_{||x|| \geq R_n^{(1)}} f(x) dx &= s_{d-1} n a \bigl(R_n^{(1)}\bigr) \bigl( R_n^{(1)} \bigr)^{d-1} f \bigl( R_n^{(1)}e_1 \bigr) \\
&\quad \, \times \int_0^{\infty} \left( 1 + \frac{a(R_n^{(1)})}{R_n^{(1)}} \rho \right)^{d-1} \frac{f \Bigl( \bigl( R_n^{(1)} + a(R_n^{(1)}) \rho \bigr) e_1 \Bigr)}{f\bigl(R_n^{(1)}e_1 \bigr)} d\rho\,,
\end{align*}
where $s_{d-1}$ is a surface area of the $(d-1)$-dimensional unit sphere in $\bbr^d$. The dominated convergence theorem guarantees that the integral above converges to $\int_0^{\infty} e^{-\rho} d\rho = 1$. Therefore, we only have to verify
that
$$
n a \bigl(R_n^{(1)}\bigr) \bigl( R_n^{(1)} \bigr)^{d-1} e^{-\psi \bigl(R_n^{(1)}\bigr)} \to 0\,, \ \ \ \text{as } n \to \infty\,.
$$
Substituting $R_n^{(1)} = \psiinv(B_n)$, we have
$$
n a \bigl(R_n^{(1)}\bigr) \bigl( R_n^{(1)} \bigr)^{d-1} e^{-\psi \bigl(R_n^{(1)}\bigr)} = \frac{a \circ \psiinv(B_n)}{a \circ \psiinv (\log n)}\, \left( \frac{\psiinv (B_n)}{\psiinv (\log n)} \right)^{d-1} (\log n)^{-1}\,.
$$
Since $B_n / \log n \to 1$, it follows from the uniform convergence of regularly varying functions (cf.\ Proposition 2.4 in \cite{resnick:2007}) that 
$$
\frac{a \circ \psiinv(B_n)}{a \circ \psiinv (\log n)} \to 1\,, \ \ \ \frac{\psiinv (B_n)}{\psiinv (\log n)}  \to 1\,.
$$
So the proof of \eqref{e:2nd.event} is complete. 

It  remains to establish \eqref{e:diff.radii}. The mean value theorem yields
\begin{align*}
r_n^{-1} \bigl(R_n^{(1)} - R_n^{(0)} \bigr) &= r_n^{-1} (\psiinv)^{\prime} (t_n)\, (B_n - A_n)  \\
&= \frac{a \circ \psiinv(t_n)}{a \circ \psiinv(\log n)}\, \frac{a \circ \psiinv(\log n)}{r_n}\, (B_n - A_n)\,,
\end{align*}
where $t_n$ lies in between $A_n$ and $B_n$. Since $A_n/\log n \to 1$ and $B_n/\log n \to 1$, we have $t_n/\log n \to 1$, and thus, 
$$
\frac{a \circ \psiinv (t_n)}{a \circ \psiinv(\log n)} \to 1 \ \ \text{as } n \to \infty\,.
$$
To finish the argument, we have to establish the following three limits:
\begin{align}
&\frac{a \circ \psiinv(\log n)}{r_n}\, \log r_n \to 0\,,  \label{e:finish1} \\[5pt]
&\frac{a \circ \psiinv(\log n)}{r_n}\, \log a \circ \psiinv(\log n) \to 0\,,  \label{e:finish2} \\[5pt]
&\frac{a \circ \psiinv(\log n)}{r_n} \log \psiinv (\log n) \to 0\,. \label{e:finish3}
\end{align}
Since $\log a \circ \psiinv(\log n) < \log r_n$ for sufficiently large $n$, \eqref{e:finish1} is implied by \eqref{e:finish2}. By virtue of the regular variation of $a \circ \psiinv$ and $\psiinv$, we have that
$$
\left( \frac{\log a \circ \psiinv (\log n)}{\log \log n} \right) \ \text{and } \left( \frac{\log \psiinv (\log n)}{\log \log n} \right)
$$
are bounded sequences. Both \eqref{e:finish2} and \eqref{e:finish3}  now follow from
\eqref{e:stronger.version}, and so we are done.
 \end{proof}

\section{Applications}  \label{s:application}

\subsection{Limit Theorems for Betti Numbers}
\label{subsec:Betti}

The results of the previous two sections show the existence of a sequences of annuli containing  different kinds of geometric objects. This subsection will further examine this layer structure and the topological properties of the objects they include,  relying on the notion of Betti numbers to quantify the topology. Our aim is to derive  limit theorems for the Betti numbers of the \v{C}ech complex built over $\mathcal{P}_n = \{ X_1,\dots,X_{N_n} \}$, where $(X_i, \, i \geq 1)$ is an i.i.d sample drawn from a spherically symmetric distribution, and $N_n=|\mathcal{P}_n|$ is a Poisson random variable with mean $n$ and is independent of $(X_i, \, i \geq 1)$. For $k \geq 3$, we take $h_n\:(\bbr^d)^k \to \{ 0,1 \}$ as in \eqref{e:betti.number}; viz. 
$$
h_n(x_1,\dots,x_k) = \one \Bigl( \beta_{k-2} \left( \check{C} \bigl( \{x_1,\dots,x_k\}, r_n \bigr) \right) = 1 \Bigr)\,, \ \ \ x_1,\dots,x_k \in \bbr^d,
$$
and define $g_n\: (\bbr^d)^k \to \{ 0,1 \}$ as in \eqref{e:def.gn}. 
Also, we define
$$
\widehat{S}_{k,n} = \sum_{\bi \in \IPk} g_n(\mathcal{X}_{\bi}, \mathcal{P}_n)\, \one \bigl( \|X_{i_j}\| \geq R_{k,n}, \ j=1,\dots,k\, \bigr)
$$
for $\mathcal{X}_{\bi} = (X_{i_1},\dots,X_{i_k})$ with $\bi = (i_1,\dots,i_k) \in \IPk$. The definition of  $R_{k,n}$ depends on whether the underlying density has a heavy tail or a light tail. Specifically, if the underlying density has a regularly varying tail as in \eqref{e:RV.tail}, then \eqref{e:normalizing.heavy} defines the $R_{k,n}$, while \eqref{e:normalizing.light} determines the $R_{k,n}$ if the density is given by \eqref{e:density.light}. We are interested in the behaviour of the Betti numbers 
$$\beta_{k-2}\Bigl( \check{C} \bigl( \mathcal{P}_n \cap B(0; R_{k,n})^c, r_n \bigr) \Bigr).
$$
 %which represents the number of "$(k-2)$-dimensional holes" lying outside of the ball $B(0; R_{k,n})$. 
 \begin{figure}[!t]
  \label{f:betti-cech}
\includegraphics[width=8cm]{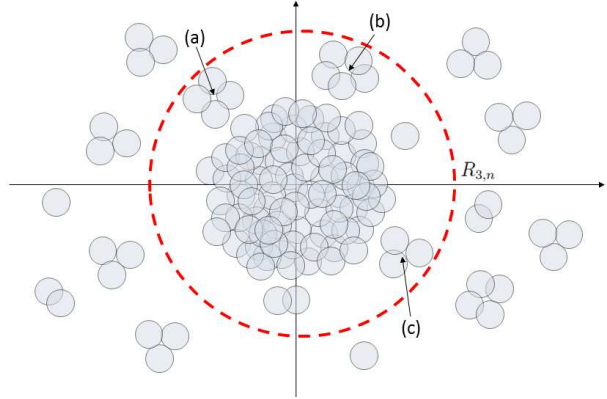}
\caption{{ For $k=3$, $d=2$, the Betti number $\beta_{1}\Bigl( \check{C} \bigl( \mathcal{P}_n \cap B(0; R_{3,n})^c, r_n \bigr) \Bigr)$ counts (one-dimensional) cycles outside of  $B(0; R_{3,n})$, while ignoring cycles inside the ball (e.g.\ (a), (b), and (c)). }}
\end{figure}
 The most relevant study to this subsection is \cite{adler:bobrowski:weinberger:2014},  in which the asymptotics of the expected Betti numbers were discussed.
 We, however, go well beyond this, by establishing Poisson limits for  the Betti numbers. As originally given in \cite{adler:bobrowski:weinberger:2014}, we shall  provide one useful inequality to elucidate the relation between $\widehat{S}_{k,n}$ above and the Betti numbers:
$$
\widehat{S}_{k,n} \leq \beta_{k-2}\Bigl( \check{C} \bigl( \mathcal{P}_n \cap B(0; R_{k,n})^c, r_n \bigr) \Bigr) \leq \widehat{S}_{k,n} + L_{k,n},
$$
where
$$
L_{k,n} = \hspace{-10pt}\sum_{\bi \in \mathcal I_{|\Pn|, k+1}} \widetilde{h}_n (\X_\bi)\, \one \bigl( \, \|X_{i_j}\| \geq R_{k,n}, \ j=1,\dots,k+1 \, \bigr),
$$
with 
$$
\widetilde{h}_n (x_1,\dots,x_{k+1}) = \one \bigl( \, \check{C}( \{ x_1,\dots,x_{k+1} \}, r_n ) \ \text{is connected}\, \bigr).  %\,, \ \ \ x_1,\dots,x_{k+1} \in \bbr^d\,.
$$
The limit theorem below demonstrates that $L_{k,n}$ tends to zero in probability, and as a result, $\widehat{S}_{k,n}$ and $\beta_{k-2}\bigl( \check{C} \bigl( \mathcal{P}_n \cap B(0; R_{k,n})^c, r_n \bigr) \bigr)$ asymptotically coincide.

\begin{theorem}  \label{c:betti.heavy}
Under the assumptions and notations of Theorem \ref{t:main.heavy}, 
$$
\beta_{k-2}\Bigl( \check{C} \bigl( \mathcal{P}_n \cap B(0; R_{k,n})^c, r_n \bigr) \Bigr) \Rightarrow Poi \left( \frac{s_{d-1}}{(\alpha k - d)k!}\, \int_{(\bbr^d)^{k-1}}  h(0,\by) d\by \right)
$$
%holds in $\bbr_+$, 
where $s_{d-1}$ is the surface area of the $(d-1)$-dimensional unit sphere in $\bbr^d$. 
\end{theorem}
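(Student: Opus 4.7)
The plan is to exploit the sandwich inequality
\[
\widehat{S}_{k,n} \ \leq \ \beta_{k-2}\!\bigl( \check{C}(\mathcal{P}_n \cap B(0; R_{k,n})^c, r_n) \bigr) \ \leq \ \widehat{S}_{k,n} + L_{k,n}
\]
recalled immediately before the statement of the theorem. This reduces the problem to (i) identifying the weak limit of $\widehat{S}_{k,n}$ via Theorem~\ref{t:main.heavy}, and (ii) showing that $L_{k,n}\to 0$ in probability. Slutsky's theorem then delivers the Poisson limit for $\beta_{k-2}$.

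For (i), I would apply Theorem~\ref{t:main.heavy} with $h$ as in~\eqref{e:betti.number}. Shift invariance is immediate, and the points-in-proximity condition~\eqref{e:close.enough} holds because a $(k-2)$-cycle carried by $k$ vertices of a \v{C}ech complex at radius $1/2$ forces every pair of those vertices to lie within a fixed distance of each other. The key identification is that, owing to the built-in rescaling by $R_{k,n}^{-1}$,
\[
\widehat{S}_{k,n} \ = \ \tildeN_n^{(k)}\!\bigl( B\times L_k \bigr), \qquad B \definedas \bigl\{(x_1,\dots,x_k)\in E_k\: \|x_j\|\geq 1,\ j=1,\dots,k\bigr\}.
\]
Viewed in the compactification $E_k$, $B\times L_k$ is relatively compact and its boundary carries zero limit mass (the radial slice of $\nu_k$ is absolutely continuous and $\partial L_k$ has vanishing $\lambda_k$-measure), so the continuous mapping theorem yields $\widehat{S}_{k,n}\Rightarrow N^{(k)}(B\times L_k)$. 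A polar reduction of~\eqref{e:def.nu.heavy} together with $\int_{\|x\|\geq 1}\|x\|^{-\alpha k}\,dx = s_{d-1}/(\alpha k-d)$ then produces exactly the intensity stated in the theorem.

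For (ii), I would bound $E[L_{k,n}]$ using the Slivnyak--Mecke formula for $\mathcal{P}_n$, followed by the substitution $x_j = x_1 + r_n y_{j-1}$, $j\geq 2$. The points-in-proximity condition on the connectedness indicator $\widetilde h$ confines the $y_{j-1}$ to a compact set, and regular variation of $f$ combined with the non-increasing sequence $(r_n)$ and $R_{k,n}\to\infty$ (whence $r_n/R_{k,n}\to 0$) gives $f(x_1+r_n y)\sim f(x_1)$ uniformly on $\|x_1\|\geq R_{k,n}$. Karamata's theorem then evaluates the resulting tail integral as
\[
\int_{\|x\|\geq R_{k,n}} f(x)^{k+1}\,dx \ \sim \ \frac{s_{d-1}\,R_{k,n}^d\, f(R_{k,n}e_1)^{k+1}}{\alpha(k+1)-d},
\]
and dividing through by~\eqref{e:normalizing.heavy} reduces $E[L_{k,n}]$ to a constant multiple of $nr_n^d f(R_{k,n}e_1)$; a second application of~\eqref{e:normalizing.heavy} identifies this with $(r_n/R_{k,n})^{d/k}\to 0$. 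The main obstacle is precisely this uniform replacement $f(x+r_n y)\sim f(x)$: it calls on the uniform convergence theorem for regular variation applied jointly in $y$ over a compact set and in $x$ over the unbounded region $\|x\|\geq R_{k,n}$, and one must verify that Karamata yields a sharp leading constant so that the cancellations against~\eqref{e:normalizing.heavy} do not leave an uncontrolled residual.
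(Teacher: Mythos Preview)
Your overall strategy---sandwich inequality, identify $\widehat S_{k,n}$ as a functional of $\tildeN_n^{(k)}$, then kill $L_{k,n}$---coincides with the paper's proof, and part~(i) is carried out the same way there.

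Where you diverge is part~(ii). You bound $\E[L_{k,n}]$ directly via Slivnyak--Mecke, the substitution $x_j=x_1+r_ny_{j-1}$, Potter bounds to justify $f(x_1+r_ny)\approx f(x_1)$, and Karamata for the tail integral $\int_{\|x\|\geq R_{k,n}}f(x)^{k+1}\,dx$; the resulting order $nr_n^d f(R_{k,n}e_1)\sim(r_n/R_{k,n})^{d/k}\to 0$ is correct. The paper instead reapplies Theorem~\ref{t:main.heavy} to the $(k+1)$-point process built from the connectedness indicator $\widetilde h$, obtaining a non-trivial Poisson limit at the scale $R_{k+1,n}$; since \eqref{e:normalizing.heavy} forces $R_{k,n}/R_{k+1,n}\to\infty$, rescaling by $R_{k,n}$ instead of $R_{k+1,n}$ collapses the process to zero, and $L_{k,n}\stackrel{p}{\to}0$ drops out immediately. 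The paper's route is shorter and avoids the uniform-replacement issue you flag, because all the regular-variation bookkeeping (Potter bounds, Karamata) has already been absorbed into the proof of Theorem~\ref{t:main.heavy}. Your route is self-contained and makes the vanishing rate $(r_n/R_{k,n})^{d/k}$ explicit, which is informative but not needed for the statement. The ``obstacle'' you identify is real but not serious: it is exactly the Potter-bound argument already used in the Appendix proof of Theorem~\ref{t:main.heavy}, so you can cite that rather than redo it.
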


\begin{proof}
First of all,  note that the variable $\widehat{S}_{k,n}$ defined above can also be written, in terms of the point process  \eqref{e:def.pp.heavy1}, as 
\begin{align*}
\widehat{S}_{k,n} = &\sum_{\bi \in \IPk} g_n (\mathcal{X}_{\bi}, \mathcal{P}_n)\, \epsilon_{R_{k,n}^{-1} \mathcal{X}_{\bi}} \Bigl( \bigl\{ \bx \in (\bbr^d)^k: \|x_i\| \geq 1, \, \ i=1,\dots, k \bigr\} \Bigr)\,.
\end{align*}
Appealing to Theorem \ref{t:main.heavy}, 
\begin{align*}
\widehat{S}_{k,n} &\Rightarrow Poi\, \Bigl( \nu_k \bigl\{ \bx \in (\bbr^d)^k: \|x_i\| \geq 1, \, \ i=1,\dots, k \bigr\} \Bigr)  \\     
&=Poi \left( \frac{s_{d-1}}{(\alpha k - d)k!}\, \int_{(\bbr^d)^{k-1}}  h(0,\by) d\by \right) \ \ \ \text{in } \bbr_+\,.
\end{align*}
If we can now show that $L_{k,n} \to 0$ in probability, then the previous line suffices to prove the theorem. 

To this end, note that, in view of Theorem \ref{t:main.heavy}, the point process
\begin{equation}  \label{e:dummy.pp}
\sum_{\bi \in \mathcal I_{|\Pn|, k+1}} \widetilde{h}_n(\X_\bi)\, \epsilon_{R_{k+1,n}^{-1} \X_\bi }\, (\cdot)
\end{equation}
converges to a non-trivial Poisson random measure. Also, it is easy to see from \eqref{e:normalizing.heavy} that $R_{k,n} / R_{k+1,n} \to \infty$ as $n \to \infty$. Therefore, replacing the scaling constants $R_{k+1,n}$ in \eqref{e:dummy.pp} by $R_{k,n}$, the corresponding point process converges to zero in probability. Thus $L_{k,n} \stackrel{p}{\to} 0$ follows,  as required.
\end{proof}

\begin{theorem}  
Assume the conditions of Theorem \ref{t:main.light}. \\
$(i)$ \  If $a(R_{k,n})/r_n \to c \in (0,\infty]$ as $n \to \infty$, then 
$$
\beta_{k-2}\Bigl( \check{C} \bigl( \mathcal{P}_n \cap B(0; R_{k,n})^c, r_n \bigr) \Bigr) \Rightarrow 
$$
$$
Poi \Biggl( \frac{1}{k!}\, \int_{(\bbr^d)^{k-1}} \int_{S^{d-1}} 
\int_{\rho \geq 0\,, \ \rho + c^{-1} \langle \theta, y_i \rangle \geq 0\,, \ i=1,\dots,k-1} \hspace{-60pt} e^{ -k\rho - c^{-1} \sum_{i=1}^{k-1} \langle \theta,y_i \rangle }\, h(0,\by)\, d\rho J(\theta) d\theta  d\by \Biggr)\,.
$$
$(ii)$ \ If $a(R_{k,n})/r_n \to 0$ as $n \to \infty$, then 
$$
\beta_{k-2}\Bigl( \check{C} \bigl( \mathcal{P}_n \cap B(0; R_{k,n})^c, r_n \bigr) \Bigr) \stackrel{p}{\to} 0\,.
$$
\end{theorem}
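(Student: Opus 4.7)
The plan is to follow essentially the same sandwich argument as in Theorem \ref{c:betti.heavy}, but with Theorem \ref{t:main.light} replacing Theorem \ref{t:main.heavy}. First, I would rewrite
\[
\widehat{S}_{k,n} \ = \ \sum_{\bi \in \IPk} g_n(\X_\bi,\Pn)\, \epsilon_{\Xsupn_\bi}(A_n),
\]
where $A_n$ is the subset of $E_k = ((-\infty,\infty]^d)^k$ corresponding to the event $\{\|X_{i_j}\| \geq R_{k,n},\ j=1,\dots,k\}$ after the light-tail normalisation. Working in polar coordinates with $X_i = \rho_i \theta_i$, the shift by $R_{k,n}S(X_i)$ means that in the positive orthant $A_n$ becomes (asymptotically) the set where each $X_{i_j}^{(n)}$ has nonnegative radial component, i.e.\ $\{\rho_j \geq 0\}$ in the $(\rho,\theta,y)$ coordinates used in the intensity formula \eqref{e:intensity.finite}.

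For part $(i)$, I would then apply Theorem \ref{t:main.light}$(i)$ to $\tildeN_n^{(k)}$, reading off the weak limit $\widehat{S}_{k,n} \Rightarrow Poi(\nu_k(A))$ where $A$ is the limiting version of $A_n$. Substituting the condition $\rho \geq 0$ and $\rho + c^{-1}\langle \theta,y_i\rangle \geq 0$ into \eqref{e:intensity.finite} yields precisely the claimed intensity, except that the $\theta$-integration is only over the positive orthant. To extend it to all of $S^{d-1}$, I invoke the spherical symmetry argument outlined after Theorem \ref{t:main.light}: by symmetry the overall limit is $2^d$ times the contribution from the positive orthant, which is exactly what converts $\int_{\theta \succeq 0} J(\theta)\, d\theta$ into $\int_{S^{d-1}} J(\theta)\, d\theta$. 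Next, to control $L_{k,n}$, note that $\widetilde h_n(x_1,\dots,x_{k+1}) = \one(\check C(\{x_1,\dots,x_{k+1}\},r_n)\text{ connected})$ satisfies the shift invariance and points in proximity hypotheses, so Theorem \ref{t:main.light} applies to the corresponding point process of $(k+1)$-tuples with scaling $R_{k+1,n}$. Since \eqref{e:normalizing.light} gives $R_{k,n}/R_{k+1,n} \to \infty$ (as larger $k$ requires pushing further into the tail to compensate for the extra $r_n^{d(k-1)}$ factor), rescaling by $R_{k,n}$ instead of $R_{k+1,n}$ pushes the intensity to zero, so $L_{k,n} \stackrel{p}{\to} 0$. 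Combined with the sandwich $\widehat S_{k,n} \leq \beta_{k-2} \leq \widehat S_{k,n} + L_{k,n}$, this gives (i).

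For part $(ii)$, Theorem \ref{t:main.light}$(ii)$ immediately yields $\widehat S_{k,n} \stackrel{p}{\to} 0$; the same argument with $\widetilde h_n$ in place of $h_n$ and with $(k+1)$-tuples gives $L_{k,n}\stackrel{p}{\to} 0$ as well (here one needs to verify that $a(R_{k+1,n})/r_n \to 0$ continues to hold, which follows from the monotone decay of $a$ at infinity and $R_{k+1,n} < R_{k,n}$ for large $n$). The sandwich then forces the Betti number to $0$ in probability.

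The main obstacle I anticipate is the bookkeeping in translating the condition $\{\|X_{i_j}\|\geq R_{k,n}\}$ through the light-tail normalisation into a form compatible with the coordinate system used in \eqref{e:intensity.finite}; in particular, checking that the boundary effects from points with $S(X_i)$ not equal to $X_i/\|X_i\|$ (i.e.\ points outside the positive orthant) are handled correctly by the spherical-symmetry extension, rather than contributing spurious mass to the limit. A secondary nuisance is verifying that the set $A$ is a $\nu_k$-continuity set so that vague convergence yields actual convergence of measure values; this should follow from the absolute continuity of $\nu_k$ with respect to the polar-coordinate Lebesgue measure, as is visible from \eqref{e:intensity.finite}.
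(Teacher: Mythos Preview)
Your overall strategy matches the paper's: sandwich $\widehat S_{k,n}\le\beta_{k-2}\le\widehat S_{k,n}+L_{k,n}$, read off the limit of $\widehat S_{k,n}$ from Theorem~\ref{t:main.light} together with the spherical-symmetry extension, and then show $L_{k,n}\stackrel{p}{\to}0$. Your handling of $\widehat S_{k,n}$, including the $2^d$ bookkeeping that converts $\int_{\theta\succeq 0}$ into $\int_{S^{d-1}}$, is essentially what the paper does.

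The gaps are in both of your arguments for $L_{k,n}\stackrel{p}{\to}0$. In part~$(i)$ you transplant the heavy-tail reasoning verbatim and assert $R_{k,n}/R_{k+1,n}\to\infty$. This is \emph{false} in the light-tail regime: by \eqref{e:normalizing.light} every $R_{j,n}$ grows like $\psiinv(\log n)$ and they differ only in lower-order terms (cf.\ \eqref{e:log.grow.ex}), so the ratio tends to~$1$. Moreover the light-tail normalisation is affine, $X\mapsto a(R_{\cdot,n})^{-1}\bigl(X-R_{\cdot,n}S(X)\bigr)$, not a pure dilation, so ``rescaling by $R_{k,n}$ instead of $R_{k+1,n}$'' is not a meaningful operation here. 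What actually makes the argument work is that, under the $(k{+}1)$-level affine normalisation, the event $\{\|X_{i_j}\|\ge R_{k,n}\}$ corresponds to radial coordinate at least $(R_{k,n}-R_{k+1,n})/a(R_{k+1,n})$, and it is this threshold that tends to~$\infty$, so the limiting Poisson measure assigns it zero mass.

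In part~$(ii)$ your justification that $a(R_{k+1,n})/r_n\to 0$ is backwards: $a$ eventually non-increasing together with $R_{k+1,n}<R_{k,n}$ yields $a(R_{k+1,n})\ge a(R_{k,n})$, which is the wrong inequality. The paper itself only asserts $L_{k,n}\stackrel{p}{\to}0$ here; the honest route is a direct first-moment bound, carrying out the change-of-variables computation from the proof of Theorem~\ref{t:main.light}~$(ii)$ with $k{+}1$ points in place of $k$ but still truncated at radius $R_{k,n}$, and using that the integrand is dominated and tends to zero pointwise exactly as in \eqref{e:application.dct}.
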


\begin{proof}
For the proof of $(i)$, note first that  Theorem \ref{t:main.light} $(i)$, implies that
\begin{equation}   \label{e:transform.betti}
\sum_{\bi \in \IPk} g_n (\mathcal{X}_{\bi}, \mathcal{P}_n)\, \epsilon_{\Xsupn_\bi} \Bigl( \bigl( [0,\infty]^d \bigr)^k \Bigr)  \Rightarrow Poi\, \biggl( \nu_k \Bigl\{ \bigl( [0,\infty]^d \bigr)^k \Bigr\} \biggr) .
\end{equation}
Due to the symmetry of the integral with respect to $\theta \in S^{d-1}$, 
\begin{align*}
&\nu_k \Bigl\{ \bigl( [0,\infty]^d \bigr)^k \Bigr\}  \\
&= \frac{1}{k!}\, \int_{(\bbr^d)^{k-1}} \int_{\theta \succeq 0} \int_{\rho \geq 0\,, \ \rho + c^{-1} \langle \theta, y_i \rangle \geq 0\,, \ i=1,\dots,k-1} \hspace{-60pt} e^{ -k\rho - c^{-1} \sum_{i=1}^{k-1} \langle \theta,y_i \rangle }\, h(0,\by)\, d\rho J(\theta) d\theta  d\by  \\      
&=\frac{1}{2^d k!}\, \int_{(\bbr^d)^{k-1}} \int_{S^{d-1}} \int_{\rho \geq 0\,, \ \rho + c^{-1} \langle \theta, y_i \rangle \geq 0\,, \ i=1,\dots,k-1} \hspace{-60pt} e^{ -k\rho - c^{-1} \sum_{i=1}^{k-1} \langle \theta,y_i \rangle }\, h(0,\by)\, d\rho J(\theta) d\theta  d\by.
\end{align*}
{ Further, due to the spherical symmetry of the density, the left-hand side in \eqref{e:transform.betti} has the same weak limit as $2^{-d} \widehat{S}_{k,n}$.}
Consequently, $\widehat{S}_{k,n}$ weakly converges to 
$$
 Poi \left( \frac{1}{k!}\, \int_{(\bbr^d)^{k-1}} \int_{S^{d-1}} \int_{\rho \geq 0\,, \ \rho + c^{-1} \langle \theta, y_i \rangle \geq 0\,, \ i=1,\dots,k-1} \hspace{-10pt} \hspace{-60pt} e^{ -k\rho - c^{-1} \sum_{i=1}^{k-1} \langle \theta,y_i \rangle }\, h(0,\by)\, d\rho J(\theta) d\theta  d\by \right)\,.
$$
To complete the proof of $(i)$ it now suffices to verify  that $L_{k,n} \to 0$ in probability as $n \to \infty$, which follows along the same lines as in the proof of Theorem \ref{c:betti.heavy}. 

To show $(ii)$, note  that 
\begin{align*}
&\beta_{k-2}\Bigl( \check{C} \bigl( \mathcal{P}_n \cap B(0; R_{k,n})^c, r_n \bigr) \Bigr) \\
&\leq \sum_{\bi \in \IPk} h_n (\mathcal{X}_{\bi})\, \one \bigl( \, \|X_{i_j}\| \geq R_{k,n}\,, \ j=1,\dots,k\, \bigr)  + L_{k,n}\,.
\end{align*}
 The above expression has the same weak limit as 
$$
2^d \sum_{\bi \in \IPk} h_n (\mathcal{X}_{\bi})\, \epsilon_{\Xsupn_\bi} \Bigl( \bigl( [0,\infty]^d \bigr)^k \Bigr) + L_{k,n}\,.
$$ 
Once again, $L_{k,n} \to 0$ in probability, and so the assertion follows from Theorem \ref{t:main.light} $(ii)$. 
\end{proof}

\subsection{Limit Theorems for Partial Maxima}

We now turn to describing the limiting behaviour of the maximum distance from the origin of the random points constituting the geometric objects that we have been studying so far. 
%Put more simply, we study  the asymptotic positions of geometric objects composed of extremes. 
More specifically, we  consider the `maxima process'  
\begin{equation}   \label{e:comp.maxima}
\bigvee_{\substack{1 \leq i_1 < \dots < i_k \leq |\mathcal{P}_n|t\,, \\[2pt] g_n(\mathcal{X}_{\bi}, \mathcal{P}_n) = 1}} \hspace{-10pt} \frac{\| X_{i_1} \|  - d_{k,n}}{c_{k,n}}\,, \ \ \ t \in [0,1]\,,
\end{equation}
where $a \vee b = \max \{  a, b\}$ for $a, b \in \bbr$, and $c_{k,n}>0$ and $d_{k,n} \in \bbr$ are the normalising sequences  of the previous sections. 
%The maxima operation  $\vee$ here is taken  in componentwise sense; viz.\ for $x = (x^{(1)}, \dots, x^{(d)}) \in \bbr^d$, $\vee x = \left( \vee x^{(1)}, \dots, \vee x^{(d)} \right)%$. The minima operation $\wedge$ is defined in componentwise sense as well.  

For a concrete example, suppose that $h_n\:(\bbr^d)^k \to \{ 0,1 \}$ is defined as in \eqref{e:geometric.graph}, where $\Gamma$ is a connected graph with $k$ vertices. Then, a $k$-tuple of random points can contribute to the maxima process, only if  its points serve as the 
 vertices of graph isomorphic to  $\Gamma$.
 
 Note that   the process \eqref{e:comp.maxima} only requires the computation of the maximum of $\|X_{i_1}\|$ (suitably scaled and centred). However, in view of \eqref{e:close.enough}, all the components in $\mathcal{X}_{\bi} = (X_{i_1},\dots,X_{i_k})$ must be close to each other. Therefore, the results below will be robust as to which component is chosen from $\mathcal{X}_{\bi}$. 
 
We start with the regularly varying tail case, which is essentially a corollary of Theorem 
\ref{t:main.heavy}. The limit in this case
  is a \textit{time-scaled extremal Fr\'echet process} (cf.\ \cite{owada:samorodnitsky:2015b}). The main difference between this and a classical extremal Fr\'echet process is that the former exhibits dependence in the max-increments, while the latter does not. 

\begin{theorem}  \label{c:heavy.max}
Under the assumptions of Theorem \ref{t:main.heavy}, let $(j_l, s_l)$ represent the points of a Poisson random measure with mean measure $\widetilde{\nu}_k \times \widetilde{\lambda}_k$, where $\widetilde{\nu}_k(A) = \nu_k \bigl(A \times (\bbr^d)^{k-1} \bigr)$ for a measurable set $A \subset \bbr^d$ ($\nu_k$ is defined in \eqref{e:def.nu.heavy}), and $\widetilde{\lambda}_k(B) = \lambda_k \bigl( \{(z_1,\dots,z_k) \in L_k\,, \, z_k \in B  \} \bigr)$ for measurable $B \subset [0,1]$. Then, 
$$
\bigvee_{\substack{1 \leq i_i < \dots < i_k \leq |\mathcal{P}_n| t\,, \\[2pt]  g_n ( \mathcal{X}_{\bi}, \mathcal{P}_n ) = 1}} \frac{\|X_{i_1}\|}{R_{k,n}} \Rightarrow \bigvee_{s_l \leq t} \|j_l\|\,, \qquad \text{in } D [0,1]\,,
$$
where $D [0,1]$ is the space of right continuous functions from $[0,1]$ into $\bbr$ with left limits. The limiting process is a time-scaled extremal $(\alpha k -d)$-Fr\'echet process with finite-dimensional laws determined as follows: for $0 = t_0 \leq t_1 < \dots < t_K \leq 1$, $\eta_i \geq 0$, $i=1,\dots,K$,
\begin{align} 
\P  &\Bigl( \, \bigvee_{s_l \leq t_i} \|j_l\| \leq \eta_i\,, \ i=1,\dots,K \Bigr)  \label{e:max.stable.fidi} \\
&= \exp \biggl\{ -\frac{s_{d-1}}{(k!)^2 (\alpha k -d)} \int_{(\bbr^d)^{k-1}} \hspace{-5pt} h(0,\by) d\by \sum_{i=1}^K (t_i^k - t_{i-1}^k)\, \Bigl( \bigwedge_{i \leq  j \leq K} \eta_j \Bigr)^{-(\alpha k -d)} \biggr\}\,. \notag
\end{align}
\end{theorem}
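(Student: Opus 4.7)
My strategy is to deduce Theorem \ref{c:heavy.max} from Theorem \ref{t:main.heavy} by writing the partial-max process as a single-event functional of $\widetilde N_n^{(k)}$, combining a continuous-mapping argument for the finite-dimensional distributions with a monotonicity argument for convergence in $D[0,1]$. Since $d_{k,n}=0$ and $c_{k,n}=R_{k,n}$ in the heavy-tailed setting, every atom of $\widetilde N_n^{(k)}$ has the form $\bigl((X_{i_1}/R_{k,n},\dots,X_{i_k}/R_{k,n}),(i_1,\dots,i_k)/|\mathcal P_n|\bigr)$, and the constraint $i_k\leq |\mathcal P_n|t$ is the same as the last $L_k$-coordinate being $\leq t$. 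Writing $M_n(t)$ for the left-hand side of the convergence, one has $M_n(t)=\Phi\bigl(\widetilde N_n^{(k)}\bigr)(t)$, where
\[
\Phi(\xi)(t) \definedas \sup\bigl\{\|x_1\|:\bigl((x_1,\dots,x_k),(z_1,\dots,z_k)\bigr)\in\mathrm{supp}(\xi),\ z_k\leq t\bigr\}.
\]

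Fix $0=t_0<t_1<\dots<t_K\leq 1$ and $\eta_1,\dots,\eta_K\geq 0$, set $\tilde\eta_j\definedas\bigwedge_{i=j}^K\eta_i$, and define
\[
B\definedas\bigcup_{j=1}^K\bigl\{(x,z)\in E_k\times L_k:z_k\in(t_{j-1},t_j],\ \|x_1\|>\tilde\eta_j\bigr\}.
\]
The key observation is that $\bigcap_{i=1}^K\{M_n(t_i)\leq \eta_i\}=\{\widetilde N_n^{(k)}(B)=0\}$. Each constituent set of $B$ is relatively compact in $E_k\times L_k$ (bounded away from $\mathbf 0\in(\bbr^d)^k$ because $\tilde\eta_j>0$), and its topological boundary lies in $\{z_k\in\{t_{j-1},t_j\}\}\cup\{\|x_1\|=\tilde\eta_j\}$, which is $(\nu_k\times\lambda_k)$-null by the absolute continuity visible from \eqref{e:def.nu.heavy}. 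Theorem \ref{t:main.heavy} therefore yields
\[
\P\bigl(\widetilde N_n^{(k)}(B)=0\bigr)\to \exp\bigl(-(\nu_k\times\lambda_k)(B)\bigr).
\]

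The intensity computation is elementary. Disjointness of the $z_k$-slabs together with Fubini give $(\nu_k\times\lambda_k)(B)=\sum_{j=1}^K\tilde\nu_k(\{\|x\|>\tilde\eta_j\})\,\tilde\lambda_k((t_{j-1},t_j])$, where $\tilde\nu_k,\tilde\lambda_k$ are the marginals defined in the statement of the theorem. Integrating over the simplex $0\leq z_1<\dots<z_{k-1}<z_k$ produces $\tilde\lambda_k((t_{j-1},t_j])=(t_j^k-t_{j-1}^k)/k!$, while \eqref{e:def.nu.heavy} (with $a_i=-\infty$, $b_i=\infty$ for $i\geq 1$), followed by a polar change of variables, gives
\[
\tilde\nu_k(\{\|x\|>\eta\})\ =\ \frac{s_{d-1}}{k!(\alpha k-d)}\,\eta^{-(\alpha k-d)}\int_{(\bbr^d)^{k-1}}h(0,\by)\,d\by.
\]
Multiplying, summing over $j$, and exponentiating reproduces the right-hand side of \eqref{e:max.stable.fidi}. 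Recognising the image of $N^{(k)}$ under the projection $(x,z)\mapsto(x_1,z_k)$ as a Poisson random measure on $\bbr^d\times[0,1]$ with intensity $\tilde\nu_k\times\tilde\lambda_k$ identifies the limit with $\bigvee_{s_l\leq t}\|j_l\|$.

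The upgrade from finite-dimensional to functional convergence in $D[0,1]$ is routine because both $M_n$ and the limit are nondecreasing and right-continuous; since the limit is a pure-jump process with only countably many (Poissonian) jumps in $[0,1]$, almost every $t\in[0,1]$ is a continuity point of the limit with probability one, and for monotone processes fidi-convergence on a dense set of continuity points implies convergence in the Skorohod space $D[0,1]$ (see, e.g., the extremal-process arguments of Chapter 4 of \cite{resnick:1987}). The main obstacle is the continuity-set verification for $B$; but once one notes that $\nu_k$ concentrates on the diagonal of $(\bbr^d)^k$ with an absolutely continuous density $\|x\|^{-\alpha k}dx$ along that diagonal (as read off from \eqref{e:def.nu.heavy}), this is immediate.
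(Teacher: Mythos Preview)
Your proof is correct and reaches the same destination as the paper, but by a somewhat different route. The paper first \emph{projects} $\widetilde N_n^{(k)}$ onto the first spatial coordinate and the last $L_k$-coordinate, obtaining weak convergence of $\sum_{\bi}g_n(\mathcal X_{\bi},\mathcal P_n)\,\epsilon_{(R_{k,n}^{-1}X_{i_1},\,i_k/|\mathcal P_n|)}$ to the Poisson random measure $\sum_l\epsilon_{(j_l,s_l)}$, and then applies the continuous mapping theorem directly with the functional $T\bigl(\sum_l\epsilon_{(z_l,\tau_l)}\bigr)(t)=\bigvee_{\tau_l\le t}\|z_l\|$, citing its almost-sure continuity from p.~214 of \cite{resnick:1987}. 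This delivers $D[0,1]$-convergence in one stroke; the finite-dimensional formula \eqref{e:max.stable.fidi} is then read off from $\widetilde\nu_k$ and $\widetilde\lambda_k$ exactly as you do.

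Your approach bypasses the continuity of $T$ on $M_p$: you compute the finite-dimensional distributions directly from avoidance probabilities $\P\{\widetilde N_n^{(k)}(B)=0\}$ (a clean use of the Portmanteau theorem for point processes), and then invoke monotonicity of the sample paths to lift fidi convergence to $D[0,1]$. This is more elementary in that it avoids checking functional continuity on the space of Radon measures, at the cost of a separate tightness step. Both the identity $\bigcap_i\{M_n(t_i)\le\eta_i\}=\{\widetilde N_n^{(k)}(B)=0\}$ and the continuity-set verification are correct as you have them (the diagonal-support observation for $\nu_k$ is exactly the right way to see that $\{\|x_1\|=\tilde\eta_j\}$ is $\nu_k$-null). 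One small caveat: your relative-compactness argument for $B$ tacitly assumes $\tilde\eta_j>0$; the degenerate case where some $\eta_i=0$ should be handled separately (both sides of \eqref{e:max.stable.fidi} are then zero).
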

\begin{proof}
Restricting the domain of the point process convergence  in Theorem \ref{t:main.heavy}, we have
\begin{align*}
\sum_{\bi \in \IPk} &g_n(\mathcal{X}_{\bi}, \mathcal{P}_n)\, \epsilon_{\bigl( R_{k,n}^{-1} X_{i_1}, \ i_k/|\mathcal{P}_n| \bigr)}\, (\cdot)  \\
&\Rightarrow \sum_{l} \epsilon_{(j_l, s_l)}\, (\cdot) \ \ \ \text{in } M_p \Bigl( \bigl([-\infty,\infty]^d \setminus \{ 0 \} \bigr) \times [0,1] \Bigr)\,. 
\end{align*}
The functional $T\: M_p\bigl( \bigl([-\infty,\infty]^d \setminus \{ 0 \} \bigr) \times [0,1] \bigr) \to D [0,1]$ defined by $T (\sum_l \epsilon_{(z_l,\tau_l)}) = \bigvee_{\tau_l \leq \cdot}\|z_l\|$, is almost surely continuous. (cf.\  p214 of \cite{resnick:1987}.) Applying the continuous mapping theorem immediately gives the required weak convergence. 

What remains is to establish the precise form of the limit process, as in \eqref{e:max.stable.fidi}. To show this, note first that 
\begin{align*}
\P  &\Bigl( \, \bigvee_{s_l \leq t_i} \|j_l\| \leq \eta_i\,, \ i=1,\dots,K \Bigr)   \\
&= \exp \biggl\{ - \sum_{i=1}^K \tilde\nu_k \Bigl( \bigl\{ z\ \in \bbr^d: \| z\| > \bigwedge_{i \leq j \leq K} \eta_j \bigr\} \Bigr)\, \tilde\lambda_k \bigl( (t_{i-1}, t_i] \bigr)  \biggr\}\,.
\end{align*}
By  \eqref{e:def.nu.heavy} and \eqref{e:homo.exp} we have that
\begin{align*}
\tilde\nu_k &\Bigl( \bigl\{ z\ \in \bbr^d: \| z\| > \bigwedge_{i \leq j \leq K} \eta_j \bigr\} \Bigr) \\
&= \frac{s_{d-1}}{k! (\alpha k -d)} \int_{(\bbr^d)^{k-1}} \hspace{-5pt} h(0,\by) d\by\, \Bigl( \bigwedge_{i \leq  j \leq K} \eta_j \Bigr)^{-(\alpha k -d)},
\end{align*}
and 
$$
\tilde\lambda_k \bigl( (t_{i-1}, t_i] \bigr) = (k!)^{-1} (t_i^k - t_{i-1}^k)\,,
$$
from which \eqref{e:max.stable.fidi} now follows. 
\end{proof}

We now turn to the case of light-tailed densities. Note firstly that a similar, but simpler, argument than the proof of Theorems \ref{t:general.thm} and \ref{t:main.light} shows that under the conditions in Theorem \ref{t:main.light} $(i)$, the point process
$$
 \sum_{  \bi\in \IPk} g_n( \Xi,  \Pn) \, \epsilon_{\Bigl( \bigl( a(R_{k,n})^{-1} \bigl( \| X_{i_j} \| - R_{k,n} \bigr), \, j=1,\dots,k \bigr),\, \bi/ |\Pn|\Bigr)}(\cdot)
$$
 converges weakly to a Poisson random measure with mean measure $\mu_k \times \lambda_k$. As usual,  $\lambda_k$ is the $k$-dimensional Lebesgue measure concentrated on $L_k$, and $\mu_k$ is given by 
\begin{align*}
\mu_k &\bigl( (a_0, b_0] \times \dots \times (a_{k-1}, b_{k-1}] \bigr) \\
&= \frac{1}{k!} \int_{(\bbr^d)^{k-1}} \int_{S_{d-1}} \int_{a_0}^{b_0} \one \bigl( a_i < \rho + c^{-1} \langle \theta,y_i \rangle \leq b_i, \ i=1,\dots,k-1 \bigr)   \\
&\qquad \, \times \exp \Bigl\{ -k\rho - c^{-1} \sum_{i=1}^{k-1} \langle \theta, y_i \rangle \Bigr\}\, h(0,\by)\, d\rho J(\theta) d\theta d\by, 
\end{align*}
where $a_i, b_i$, $i=0,\dots,k-1$ are one-dimensional real vectors with $-\infty < a_i \leq b_i \leq \infty$, and $J(\theta) = |\partial x / \partial \theta|$ is the Jacobian. 
Exploiting this result and mimicking the proof of Theorem \ref{c:heavy.max}, one can prove the following result, in which the 
limit is well described as a \textit{time-scaled extremal Gumbel process}. 

\begin{theorem}
Assume the conditions in Theorem \ref{t:main.light}, and let $(j_l, s_l)$ be the points of a Poisson random measure with mean measure $\widetilde{\mu}_k \times \widetilde{\lambda}_k$, where $\widetilde{\mu}_k(A) = \mu_k \bigl(A \times (\bbr^d)^{k-1} \bigr)$ for a measurable set $A \subset \bbr$ and $\widetilde{\lambda}_k(B) = \lambda_k \bigl( \{(z_1,\dots,z_k) \in L_k\,, \, z_k \in B  \} \bigr)$ for measurable $B \subset [0,1]$. \\
$(i)$ \ If $a(R_{k,n})/r_n \to c \in (0,\infty]$ as $n \to \infty$, then 
\begin{equation}  \label{e:max.light}
\bigvee_{\substack{1 \leq i_i < \dots < i_k \leq |\mathcal{P}_n| t\,,  \\[2pt] g_n ( \mathcal{X}_{\bi}, \mathcal{P}_n ) = 1}} \hspace{-10pt} \frac{\|X_{i_1}\|-R_{k,n}}{a(R_{k,n})} \Rightarrow \bigvee_{s_l \leq t} j_l \ \ \ \text{in } D [0,1]\,.  
\end{equation}
The finite dimensional laws of the limiting process are  as follows: for $0=t_0 \leq t_1 < \dots < t_K \leq 1$, $\eta_i \in \bbr$, $i=1,\dots,K$,  
\begin{align*} 
\P  &\Bigl( \, \bigvee_{s_l \leq t_i} j_l \leq \eta_i\,, \ i=1,\dots,K \Bigr) \\
&= \exp \biggl\{ -\frac{1}{(k!)^2 k}  \int_{(\bbr^d)^{k-1}} \int_{S_{d-1}} e^{-c^{-1} \sum_{i=1}^{k-1} \langle \theta, y_i \rangle} h(0,\by) J(\theta) d\theta d\by \\
&\quad \qquad \times \sum_{i=1}^K (t_i^k - t_{i-1}^k)\, e^{-k \bigwedge_{i \leq j \leq K}\eta_j} \biggr\}\,. 
\end{align*}
$(ii)$ \ If $a(R_{k,n})/r_n \to 0$ as $n \to \infty$, then the left-hand side in \eqref{e:max.light} converges to $0$ in probability. 
\end{theorem}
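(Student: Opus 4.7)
The plan is to parallel the proof of Theorem~\ref{c:heavy.max}, with the auxiliary point process convergence displayed immediately before the theorem statement playing the role that Theorem~\ref{t:main.heavy} played there. Specifically, by projecting onto the first coordinate of each $k$-tuple and keeping only the $i_k$-th time marker, the stated weak convergence of
$$
\sum_{\bi\in\IPk} g_n(\X_\bi,\Pn)\,\epsilon_{\bigl((a(R_{k,n})^{-1}(\|X_{i_j}\|-R_{k,n}),\,j=1,\dots,k),\ \bi/|\Pn|\bigr)}
$$
to a Poisson random measure with intensity $\mu_k\times\lambda_k$ yields, under continuous projection, that
$$
\sum_{\bi\in\IPk} g_n(\X_\bi,\Pn)\,\epsilon_{(a(R_{k,n})^{-1}(\|X_{i_1}\|-R_{k,n}),\,i_k/|\Pn|)} \Rightarrow \sum_l \epsilon_{(j_l,s_l)}
$$
in $M_p(\bbr\times[0,1])$, with mean measure $\widetilde{\mu}_k\times\widetilde{\lambda}_k$.

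Next I would apply the continuous mapping theorem to the functional $T\colon M_p(\bbr\times[0,1])\to D[0,1]$ defined by $T(\sum_l\epsilon_{(z_l,\tau_l)})=\bigvee_{\tau_l\leq\cdot} z_l$, which, as noted on p.~214 of \cite{resnick:1987}, is a.s.\ continuous at the realisations of the limiting PRM (all atoms have distinct time coordinates and the set of times is locally finite). This directly yields the weak convergence \eqref{e:max.light} for Part~$(i)$, and the identification of the limit process as $\bigvee_{s_l\leq t} j_l$.

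To obtain the finite-dimensional law, I would use the independence of the PRM on disjoint strips $\bbr\times(t_{i-1},t_i]$ and write
$$
\P\Bigl(\bigvee_{s_l\leq t_i} j_l\leq\eta_i,\ i=1,\dots,K\Bigr) = \exp\Bigl\{-\sum_{i=1}^K \widetilde{\mu}_k\bigl((\textstyle\bigwedge_{i\leq j\leq K}\eta_j,\infty)\bigr)\,\widetilde{\lambda}_k((t_{i-1},t_i])\Bigr\}.
$$
The Lebesgue factor evaluates to $(k!)^{-1}(t_i^k-t_{i-1}^k)$ exactly as in the proof of Theorem~\ref{c:heavy.max}. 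For the $\widetilde{\mu}_k$ factor, I would integrate the $\rho$ variable in the defining formula for $\mu_k$ from $\eta$ to $\infty$ against $e^{-k\rho}$, producing the factor $k^{-1}e^{-k\eta}$, and then collect the remaining integral over $\by$ and $\theta$. Substitution into the product form above yields precisely the expression stated in Part~$(i)$.

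For Part~$(ii)$, the same projection and continuous mapping strategy applies, but the underlying point process itself tends to $0$ in probability by Theorem~\ref{t:main.light}$(ii)$; hence the projected max also vanishes in probability. The main obstacle I anticipate is not the finite-dimensional computation but the justification of the continuous mapping step: one must confirm that the limiting PRM places no mass on the boundary set $\{z=\eta_i\}$ nor on the exceptional set of discontinuities of $T$, and that the target space $D[0,1]$ with the Skorohod topology is the right codomain (convergence of the pre-limit step functions at fixed times $t$ together with the \emph{envelope condition} that the suprema are locally bounded). Both follow from the fact that $\widetilde{\mu}_k$ is diffuse on $\bbr$ and the points in proximity condition \eqref{e:close.enough}, which together rule out any clustering pathology.
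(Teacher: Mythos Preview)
Your proposal is correct and follows precisely the route the paper intends: the paper itself does not spell out a proof but simply says to exploit the auxiliary point process convergence displayed just before the theorem and mimic the proof of Theorem~\ref{c:heavy.max}, which is exactly what you do (project to the first radial coordinate and the $i_k$ time index, apply the a.s.\ continuous max functional from \cite{resnick:1987}, and evaluate $\widetilde{\mu}_k((\eta,\infty))$ and $\widetilde{\lambda}_k((t_{i-1},t_i])$ explicitly). Your computation of the finite-dimensional law is correct, and your handling of Part~$(ii)$ via Theorem~\ref{t:main.light}$(ii)$ is the intended argument as well.
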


\subsection{Limit Theorems for Partial Sums}

Continuing on from the previous subsection, we now consider limit theorems on partial sums. In particular, we shall  focus on a stable limit case. As seen in a variety of related studies such as \cite{davis:hsing:1995} and \cite{davis:resnick:1985}, proving stable limit theorems via a point process approach has become the gold  standard. In order to obtain stable limits, however, the underlying random variables constructing partial sums must have infinite second moments. For this reason, we shall  assume a regularly varying tail for  the underlying density, and further, 
that the homogeneity exponent $\alpha k - d$ in \eqref{e:homo.exp}  lies in the interval $(0,2)$.  Combining this constraint with $\alpha > d$ and $k \geq 2$, we need to treat only the case $1 < \alpha < 1.5$, $k=2$, and $d=1$.

\begin{theorem}  \label{t:clt}
Under the conditions of Theorem \ref{t:main.heavy}, assume that $1< \alpha < 1.5$, $k=2$, and $d=1$. Suppose additionally that an indicator $h\:\bbr^2 \to \{0,1 \}$ is not only shift invariant as in \eqref{e:invariance1} but also symmetric in the sense that
\begin{equation}  \label{e:rot.inv}
h(x_1,x_2) = h(-x_1,-x_2) \ \ \text{for all } x_1,x_2 \in \bbr\,.
\end{equation}
Then, $R_{2,n}^{-1} \sum_{\bi \in \IP2} g_n(\mathcal{X}_{\bi}, \mathcal{P}_n) X_{i_1}$  converges  weakly to a symmetric $(2\alpha-1)$-stable law. 
\end{theorem}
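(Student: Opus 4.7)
The plan is to adopt the classical point-process strategy for stable limit theorems (cf. \cite{davis:resnick:1985,davis:hsing:1995}): push the convergence of Theorem \ref{t:main.heavy} through a summation functional, separate ``large'' and ``small'' atoms by truncation, and control the small atoms through a second-moment bound. Write $S_n := R_{2,n}^{-1} \sum_{\bi \in \IP2} g_n(\X_\bi, \Pn) X_{i_1}$, and for each $\varepsilon > 0$ decompose $S_n = S_n^{>\varepsilon} + S_n^{\leq \varepsilon}$ according to whether $|X_{i_1}| > \varepsilon R_{2,n}$. The evenness of $f$ (which in dimension one is precisely $f(-x) = f(x)$) together with the symmetry \eqref{e:rot.inv} makes every summand odd under the antipodal map, so each of $S_n^{>\varepsilon}$ and $S_n^{\leq \varepsilon}$ has mean zero and no centering is required.

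For the large part, Theorem \ref{t:main.heavy} yields $\tildeN_n^{(2)} \Rightarrow N^{(2)}$ in $M_p(E_2 \times L_2)$ with mean measure $\nu_2 \times \lambda_2$. The map $\sum_l \epsilon_{((x_l, y_l), s_l)} \mapsto \sum_{|x_l| > \varepsilon} x_l$ is almost surely continuous on realisations of $N^{(2)}$ for all but countably many $\varepsilon$, so the continuous mapping theorem gives $S_n^{>\varepsilon} \Rightarrow \sum_{|j_l| > \varepsilon} j_l$, where $(j_l)$ is the first-coordinate projection of the atoms of $N^{(2)}$. From \eqref{e:def.nu.heavy} and the scaling \eqref{e:homo.exp}, together with \eqref{e:rot.inv}, the projected intensity $\tilde\nu_2$ takes the symmetric form $c_\ast |x|^{-2\alpha}\,dx$ on $\bbr \setminus \{0\}$. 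Because $2\alpha - 1 \in (1,2)$, the LePage representation of symmetric stable laws identifies the limit $\lim_{\varepsilon \downarrow 0} \sum_{|j_l| > \varepsilon} j_l$ with a symmetric $(2\alpha - 1)$-stable random variable $Z$.

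The hard step, and the main obstacle, is to verify $\lim_{\varepsilon \downarrow 0} \limsup_{n\to\infty} \P(|S_n^{\leq \varepsilon}| > \delta) = 0$ for every $\delta > 0$; granted this, a convergence-together lemma combined with the previous paragraph yields $S_n \Rightarrow Z$. Since $E[S_n^{\leq \varepsilon}] = 0$, Chebyshev reduces the task to a second-moment bound, and a Slivnyak--Mecke expansion for the variance of a Poisson $U$-statistic splits $\text{Var}(S_n^{\leq \varepsilon})$ into a ``diagonal'' part and a ``projection'' part. A change of variables $x = R_{2,n} t$, together with $f \in RV_{-\alpha}$ and \eqref{e:normalizing.heavy}, reduces the diagonal part (normalised by $R_{2,n}^2$) to a constant multiple of $\int_{-\varepsilon}^\varepsilon |t|^{2-2\alpha}\, dt = O(\varepsilon^{3-2\alpha})$, which vanishes as $\varepsilon \downarrow 0$ precisely because $\alpha < 3/2$. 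The projection part is the delicate piece: a naive estimate that ignores the isolation built into $g_n$ gives a bound of order $n^3 r_n^2 / R_{2,n}^2$, which fails to vanish once $\alpha > 7/6$. The resolution is to exploit Slivnyak's identity $E[g_n(\{x_1,x_2\},\{x_1,x_2\} \cup \Pn^\ast)] = h_n(x_1,x_2) \exp\{-n \int_{B(x_1, r_n) \cup B(x_2, r_n)} f\}$: this exponential factor is super-small once $n r_n f(x_1)$ is large, so the projection integral is effectively truncated from below at the scale $M_n$ defined by $n r_n f(M_n) \sim 1$. Since $\alpha > 1$ the regularly varying integrand is then dominated by its lower endpoint $M_n$, and a power count (e.g.\ $R_{2,n} = n^{2/(2\alpha-1)}$ and $M_n = n^{1/\alpha}$ in the simplest case $f(x) \sim |x|^{-\alpha}$, $r_n \equiv 1$) yields a projection bound of order $n^{(2\alpha-3)/[\alpha(2\alpha-1)]}$ that vanishes as $n \to \infty$ throughout the range $1 < \alpha < 3/2$. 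Combined with the diagonal estimate, this closes the argument.
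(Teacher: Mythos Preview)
Your overall architecture matches the paper's: restrict Theorem \ref{t:main.heavy} to the first coordinate, push through the truncated summation functional $T_\delta$, identify the truncated limit as a (LePage-representable) symmetric $(2\alpha-1)$-stable law, and close by a second-moment bound on the small atoms. Where you diverge from the paper is entirely in that last step, and there your sketch has a gap.

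You propose to control $\E\bigl[(S_n^{\leq \varepsilon})^2\bigr]$ by a Slivnyak--Mecke variance formula for Poisson $U$-statistics, splitting into a ``diagonal'' and a ``projection'' piece. The difficulty is that $S_n^{\leq \varepsilon}$ is \emph{not} a Poisson $U$-statistic: the kernel $g_n(\X_\bi,\Pn)$ depends on all of $\Pn$, so the standard two-term variance decomposition does not apply. If instead you expand $\E\bigl[(S_n^{\leq\varepsilon})^2\bigr]$ directly as $\sum_{\bi}\sum_{\bj}\E[\,\cdots\,]$ and sort by $|\bi\cap\bj|$, then (i) the $|\bi\cap\bj|=1$ terms---which your ``projection'' estimate $n^3 r_n^2/R_{2,n}^2$ targets---vanish identically, because two distinct isolated pairs cannot share a vertex; (ii) the real obstacle is the $|\bi\cap\bj|=0$ terms, which do \emph{not} factor (isolation of $\X_\bi$ and of $\X_\bj$ are correlated through the remaining points), and you do not address them. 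Your truncation-from-below idea via the void-probability exponential might ultimately be adaptable to these disjoint terms, but as written the argument is aimed at the wrong piece.

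The paper sidesteps this altogether with a symmetrisation device that exploits \eqref{e:rot.inv} directly: introduce an i.i.d.\ Rademacher array $(r_{\bi})$ independent of everything, observe that $g_n(-\X_\bi,-\X_m)=g_n(\X_\bi,\X_m)$ by \eqref{e:rot.inv} and distance-invariance, and use this to argue that every off-diagonal term $\bi\neq\bj$ in the squared sum has conditional expectation zero. What survives is exactly the diagonal, which after bounding $g_n\le h_n$, changing variables, and applying Karamata gives the $\delta^{3-2\alpha}$ bound you already have. This route never needs to estimate any cross term, and it is where the hypothesis \eqref{e:rot.inv} really does its work.
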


\begin{proof}
Restricting the domain of point process convergence shown in Theorem \ref{t:main.heavy}, we find that in the space $M_p \bigl([-\infty,\infty] \setminus \{ 0 \} \bigr)$, 
$$\sum_{\bi \in \IP2} g_n(\mathcal{X}_{\bi}, \mathcal{P}_n)\, \epsilon_{R_{2,n}^{-1} X_{i_1}} (\cdot)
$$  converges weakly to a Poisson random measure with intensity $\widetilde{\nu}_2$, where $\widetilde{\nu}_2(A) = \nu_2 (A \times \bbr)$ for measurable $A \subset \bbr$. 

Under our parameter restrictions, the homogeneity exponent in \eqref{e:homo.exp} is $2\alpha - 1$, and thus the limiting Poisson random measure can be represented in law by 
$$\sum_{j=1}^{\infty} \epsilon_{C_{\alpha} r_j \Gamma_j^{-(2\alpha - 1)^{-1}}},$$
 where $(r_j, \, j \geq 1)$ is a sequence of i.i.d. Rademacher random variables taking $+1$ and $-1$ with probability $1/2$, $\Gamma_j$ is the $j$-th jump time of a unit rate Poisson process, and 
$$
C_{\alpha} = \left( \frac{1}{2\alpha-1} \int_{\bbr} h(0,y)dy \right)^{1/(2\alpha-1)}.
$$ 
Here, $(r_j)$ and $(\Gamma_j)$ are taken to be independent. Notice that due to its symmetry, $C_{\alpha} \sum_{j=1}^{\infty} r_j \Gamma_j^{-(2\alpha-1)^{-1}}$ converges almost surely and has a symmetric $(2\alpha-1)$-stable 
distribution. For more information about series representation of stable laws, see Section 1.4 of \cite{samorodnitsky:taqqu:1994}. It is known that, for every $\delta>0$, the functional 
$T_{\delta}: M_p \bigl( [-\infty,\infty] \setminus \{ 0 \} \bigr) \to \bbr$
defined by 
$$
T_{\delta} \bigl( \sum_{l} \epsilon_{z_l} \bigr) = \sum_l z_l\, \one \bigl( |z_l| > \delta \bigr)
$$
is almost surely continuous. (cf.\ Section 7.2.3 in \cite{resnick:2007}.) 
Applying the continuous mapping theorem, we have, as $n \to \infty$, 
\begin{align*}
R_{2,n}^{-1} &\sum_{\bi \in \IP2} g_n \bigl(\mathcal{X}_{\bi}, \mathcal{P}_n \bigr) X_{i_1}\, \one \bigl(\, |X_{i_1}| > R_{2,n} \delta \bigr)  \\
&\Rightarrow C_{\alpha} \sum_{j=1}^{\infty} r_j \Gamma_j^{-(2\alpha-1)^{-1}}\, \one (\, C_{\alpha} \Gamma_j^{-(2\alpha-1)^{-1}} > \delta ).  % \ \ \ \text{in } \bbr\,.
\end{align*}
As $\delta \downarrow 0$, we have 
$$
C_{\alpha} \sum_{j=1}^{\infty} r_j \Gamma_j^{-(2\alpha-1)^{-1}}\, \one ( \, C_{\alpha} \Gamma_j^{-(2\alpha-1)^{-1}} > \delta ) \Rightarrow C_{\alpha} \sum_{j=1}^{\infty} r_j \Gamma_j^{-(2\alpha-1)^{-1}}. % \ \ \ \text{in } \bbr\,.
$$
Hence, it remains to show that, for every $\eta > 0$,
$$
\lim_{\delta \downarrow 0} \limsup_{n \to \infty} \P  \biggl\{ \, \Bigl| \sum_{\bi \in \IP2}  g_n\bigl( \mathcal{X}_{\bi},\mathcal{P}_n \bigr) X_{i_1}\, \one \bigl(\, |X_{i_1}| \leq R_{2,n} \delta \bigr) \Bigr| > \eta R_{2,n} \biggr\} = 0
$$
However, by the Cauchy-Schwarz inequality, this will follow immediately if we can show that
\begin{equation}  \label{e:2nd.order.conv}
\lim_{\delta \downarrow 0} \limsup_{n \to \infty} R_{2,n}^{-2}\, \E  \Bigl\{ \sum_{\bi \in \IP2}  g_n \bigl( \mathcal{X}_{\bi},\mathcal{P}_n \bigr) X_{i_1}\, \one \bigl(\, |X_{i_1}| \leq R_{2,n} \delta \bigr) \Bigr\}^2 = 0.
\end{equation}

We can write
\begin{align*}
&R_{2,n}^{-2}\, \E  \Bigl\{ \sum_{\bi \in \IP2}  g_n \bigl( \mathcal{X}_{\bi},\mathcal{P}_n \bigr) X_{i_1}\, \one \bigl(\, |X_{i_1}| \leq R_{2,n} \delta \bigr) \Bigr\}^2  \\     
&= R_{2,n}^{-2}\, \sum_{m=2}^{\infty} \P  \bigl\{ |\mathcal{P}_n| = m \bigr\}\, \E  \Bigl\{ \sum_{\bi \in \mathcal{I}_{m,2}}  g_n \bigl(\mathcal{X}_{\bi}\,, \mathcal{X}_m \bigr) X_{i_1}\, \one \bigl(\, |X_{i_1}| \leq R_{2,n} \delta \bigr) \Bigr\}^2\,,
\end{align*}
where $\mathcal{X}_m = \{ X_1,\dots,X_m \}$ for $m \geq 2$. 

Now introduce a triangular array of i.i.d. Rademacher variables $\bigl(r_{\bi}, \, 1 \leq i_1 < i_2 < \infty \bigr)$, which are independent of $(X_i, \, i \geq 1)$. Then, by virtue of the symmetry of  the $X_i$, it follows that, for all $n \geq 1$ and $m \geq 2$,
\begin{align*}
\Bigl( g_n ( \mathcal{X}_{\bi},\mathcal{X}_m )\, &X_{i_1} \one \bigl( |X_{i_1}| \leq R_{2,n} \delta \bigr), \, \bi \in \mathcal{I}_{m,k} \Bigr)  \\     
&\stackrel{d}{=} \Bigl( g_n ( r_{\bi} \mathcal{X}_{\bi}\,, \,  r_{\bi} \mathcal{X}_m )\, r_{\bi} X_{i_1} \one \bigl( |X_{i_1}| \leq R_{2,n} \delta \bigr), \, \bi \in \mathcal{I}_{m,k} \Bigr)\,.
\end{align*}
We now observe that 
\begin{align*}
\E  &\Bigl\{ \sum_{\bi \in \mathcal{I}_{m,2}}  g_n \bigl(\mathcal{X}_{\bi}\,, \mathcal{X}_m \bigr) X_{i_1}\, \one \bigl(\, |X_{i_1}| \leq R_{2,n} \delta \bigr) \Bigr\}^2  \\     
&= \E  \Bigl\{ \sum_{\bi \in \mathcal{I}_{m,2}}  g_n \bigl(r_{\bi} \mathcal{X}_{\bi}\,, \, r_{\bi} \mathcal{X}_m \bigr)\, r_{\bi} X_{i_1}\, \one \bigl(\, |X_{i_1}| \leq R_{2,n} \delta \bigr) \Bigr\}^2  \\
&=\sum_{\bi \in \mathcal{I}_{m,2}} \sum_{\bj \in \mathcal{I}_{m,2}} \E  \biggl\{ X_{i_1} X_{j_1}\, \one \bigl(\, |X_{i_1}| \leq R_{2,n} \delta\,, \ |X_{j_1}| \leq R_{2,n} \delta \, \bigr)  \\     
&\quad \, \times \E  \Bigl\{ r_{\bi} r_{\bj}\, g_n ( r_{\bi} \mathcal{X}_{\bi}\,, \, r_{\bi} \mathcal{X}_m )\, g_n ( r_{\bj} \mathcal{X}_{\bj}\,, \, r_{\bj} \mathcal{X}_m ) \, \Bigl| \, \mathcal{X}_m \Bigr\} \biggr\}
\end{align*}
If $\bi \neq \bj$, $r_{\bi}$ and $r_{\bj}$ are independent, and so 
\begin{align*}
\E  &\Bigl\{ r_{\bi} r_{\bj}\, g_n ( r_{\bi} \mathcal{X}_{\bi}\,, \, r_{\bi} \mathcal{X}_m )\, g_n ( r_{\bj} \mathcal{X}_{\bj}\,, \, r_{\bj} \mathcal{X}_m ) \, \Bigl| \, \mathcal{X}_m \Bigr\}  \\     
&=  \E  \Bigl\{ r_{\bi}\, g_n ( r_{\bi} \mathcal{X}_{\bi}\,, \, r_{\bi} \mathcal{X}_m )\, \Bigl| \, \mathcal{X}_m \Bigr\}\, \E  \Bigl\{ r_{\bj}\, g_n ( r_{\bj} \mathcal{X}_{\bj}\,, \, r_{\bj} \mathcal{X}_m )\, \Bigl| \, \mathcal{X}_m \Bigr\} \\     
&= 4^{-1}\Bigl( g_n (\mathcal{X}_{\bi}\,, \, \mathcal{X}_m ) - g_n (-\mathcal{X}_{\bi}, -\mathcal{X}_m )  \Bigr)\, \Bigl(  g_n (\mathcal{X}_{\bj}\,, \, \mathcal{X}_m ) - g_n (-\mathcal{X}_{\bj}, -\mathcal{X}_m )  \Bigr) \\     
&=0\,,
\end{align*}
where the last equality follows from \eqref{e:rot.inv}. 

This implies that all cross terms will vanish, and so
\begin{align*}
 \E  &\Bigl\{ \sum_{\bi \in \mathcal{I}_{m,2}}  g_n \bigl(\mathcal{X}_{\bi}\,, \mathcal{X}_m \bigr) X_{i_1}\, \one \bigl(\, |X_{i_1}| \leq R_{2,n} \delta \bigr) \Bigr\}^2 \\
&\qquad =\ \sum_{\bi \in \mathcal{I}_{m,2}} \E  \Bigl\{ g_n ( r_{\bi} \mathcal{X}_{\bi}\,, \, r_{\bi} \mathcal{X}_m )\, X_{i_1}^2\, \one \bigl(\, |X_{i_1}| \leq R_{2,n} \delta \bigr) \Bigr\}  \\     
&\qquad = \ \sum_{\bi \in \mathcal{I}_{m,2}} \E  \Bigl\{ g_n ( \mathcal{X}_{\bi}, \mathcal{X}_m )\, X_{i_1}^2\, \one \bigl(\, |X_{i_1}| \leq R_{2,n} \delta \bigr) \Bigr\} \\     
&\qquad  \leq\ {m \choose  2} \E \Bigl\{ h_n(X_1,X_2)\, X_1^2\, \one \bigl( |X_1| \leq R_{2,n} \delta \bigr) \Bigr\}\,.
\end{align*}
Finally, we have that
\begin{align*}
R_{2,n}^{-2}\, &\E  \Bigl\{ \sum_{\bi \in \IP2} g_n (\mathcal{X}_{\bi}, \mathcal{P}_n)\, X_{i_1} \one \bigl( |X_{i_1}| \leq R_{2,n} \delta \bigr) \Bigr\}^2 \\     
&\leq R_{2,n}^{-2}\, \sum_{m=2}^{\infty} \P  \bigl\{ |\mathcal{P}_n| = m \bigr\} \begin{pmatrix} m \\ 2 \end{pmatrix} \E  \Bigl\{ h_n(X_1,X_2)\, X_1^2\, \one \bigl( |X_1| \leq R_{2,n} \delta \bigr) \Bigr\} \\     
&= \frac{n^2}{2 R_{2,n}^2}\, \int_{\bbr^2} h_n(x_1, x_2)\, x_1^2\, \one \bigl( |x_1| \leq R_{2,n} \delta \bigr) f(x_1) f(x_2) dx_1 dx_2  \\     
&= \frac{n^2 r_n}{R_{2,n}^2} \int_0^{R_{2,n} \delta} x^2 f(x) \int_{\bbr} f\bigl(x |1 + r_n y/x|\bigr)\, h(0,y) dy dx  \\     
&\leq \frac{n^2 r_n}{R_{2,n}^2} \int_0^{R_{2,n} \delta} x^2 f(x)^2 \int_{\bbr} \sup_{m \geq 1} \frac{f\bigl(x |1 + r_m y/x|\bigr)}{f(x)}\, h(0,y) dy dx\,.
\end{align*}
Here, the equality in the fourth line is obtained by the change  of variables $x_1 \leftrightarrow x$, $x_2 \leftrightarrow x + r_n y$. Since $(r_n)$ is a bounded sequence, it follows from the uniform convergence of regularly varying functions of negative exponent (cf.\ Proposition 2.4 in \cite{resnick:2007}) that
$$
\sup_{m \geq 1} \frac{f\bigl(x |1 + r_m y/x|\bigr)}{f(x)} \to 1\,, \qquad  \text{as } x \to \infty,
$$
uniformly in $y \in \bbr$ with $h(0,y)=1$. 

Thus, as $x \to \infty$,
$$
U(x) \equiv \int_{\bbr} \sup_{m \geq 1} \frac{f\bigl(x |1 + r_m y/x|\bigr)}{f(x)}\, h(0,y) dy \to \int_{\bbr} h(0,y) dy \in (0,\infty)\,.
$$
This fact implies that $x^2 f(x)^2 U(x)$ is regularly varying with exponent $2-2\alpha$. By an application of Karamata's theorem (e.g.\ Theorem 2.1 in \cite{resnick:2007}),
\begin{align*}
\frac{n^2 r_n}{R_{2,n}^2} \int_0^{R_{2,n} \delta} x^2 f(x)^2 U(x) dx 
&\sim \frac{n^2 r_n}{3-2\alpha} R_{2,n} \delta^3 f(R_{2,n} \delta)^2 U(R_{2,n} \delta) \\     
&\sim n^2 r_n R_{2,n} f(R_{2,n})^2\, \frac{\delta^{3-2\alpha}}{3-2\alpha}\, \int_{\bbr} h(0,y)dy \\     
&\to \frac{\delta^{3-2\alpha}}{3-2\alpha}\, \int_{\bbr} h(0,y)dy \ \ \ \text{as } n \to \infty\,.
\end{align*}
Since $\delta^{3-2\alpha} \to 0$ as $\delta \downarrow 0$, \eqref{e:2nd.order.conv} follows. 
\end{proof}

\section{Appendix}

\subsection{Proof of Theorem \ref{t:general.thm}}

For the completion of Theorem \ref{t:general.thm}, we need several important ingredients, all of which belong to the  `Palm theory' of Poisson point processes. They are useful when computing expectations related to Poisson point processes. We recall abbreviations \eqref{Xk:def}, \eqref{Xi:def}, together with \eqref{In:def}. 

\begin{lemma}(Palm theory for Poisson point processes, \cite{arratia:goldstein:gordon:1989}; see also Theorem 1.6 in \cite{penrose:2003})  \label{l:palm.penrose}
Let $(X_i, \, i \geq 1)$ be i.i.d. $\bbr^d$-valued random variables with common density $f$, and for $n \geq 1$, let $\mathcal{P}_n$ denote a Poisson point process with intensity $n f$. Let $u_n(\mathcal{Y}\,, \mathcal{X})$ be a measurable function defined for $\mathcal{Y}=(y_1,\dots,y_k)$, $y_i \in \bbr^d$ and a finite subset $\mathcal{X} \supset \mathcal{Y}$ of $d$-dimensional real vectors. Then, 
$$
\E  \biggl\{ \sum_{\bi \in \IPk} u_n(\mathcal{X}_{\bi}\,, \mathcal{P}_n) \biggr\} = \frac{n^k}{k!}\, \E  \bigl\{ u_n ( \X_k\,, \, \X_k \cup \mathcal{P}_n^{\prime} ) \bigr\}\,,
$$
where $\mathcal{P}_n^{\prime}$ is an independent copy of $\mathcal{P}_n$. 
\end{lemma}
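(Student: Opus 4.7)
The plan is to prove the identity by conditioning on $|\mathcal{P}_n|$, exploiting the exchangeability of the i.i.d. sequence $(X_i)$, and then recognizing the resulting series as the expectation of $u_n$ taken against an independent Poisson copy. This is the classical route to Palm-type identities for Poisson processes driven by i.i.d.\ marks.

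First I would condition on $|\mathcal{P}_n| = m$. Since $|\mathcal{P}_n|$ is $Poi(n)$ and the atoms of $\mathcal{P}_n$ are, conditional on $|\mathcal{P}_n|=m$, distributed as $m$ i.i.d.\ draws from $f$, we obtain
\begin{equation*}
\E \biggl\{ \sum_{\bi \in \IPk} u_n(\mathcal{X}_{\bi}, \mathcal{P}_n) \biggr\} = \sum_{m=k}^{\infty} e^{-n} \frac{n^m}{m!} \, \E \biggl\{ \sum_{\bi \in \mathcal{I}_{m,k}} u_n(\mathcal{X}_{\bi}, \{X_1,\dots,X_m\}) \biggr\}.
\end{equation*}
Next, since $(X_1,\dots,X_m)$ is exchangeable, every term in the inner sum has the same expectation as the $\bi = (1,\dots,k)$ term, so the inner expectation equals $\binom{m}{k}\, \E\{u_n(\X_k, \{X_1,\dots,X_m\})\}$. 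Writing $\{X_1,\dots,X_m\} = \X_k \cup \{X_{k+1},\dots,X_m\}$ and noting that $\{X_{k+1},\dots,X_m\}$ is independent of $\X_k$ and consists of i.i.d.\ $f$-distributed variables, we get
\begin{equation*}
\E \biggl\{ \sum_{\bi \in \IPk} u_n(\mathcal{X}_{\bi}, \mathcal{P}_n) \biggr\} = \sum_{m=k}^{\infty} e^{-n} \frac{n^m}{m!} \binom{m}{k}\, \E\{u_n(\X_k, \X_k \cup \{X_{k+1},\dots,X_m\})\}.
\end{equation*}

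Then I would perform the algebraic simplification. The binomial factor reduces $m!/k!(m-k)!$, and after canceling $m!$ and substituting $j = m-k$, the series becomes
\begin{equation*}
\frac{n^k}{k!} \sum_{j=0}^{\infty} e^{-n} \frac{n^j}{j!}\, \E\{u_n(\X_k, \X_k \cup \{X_{k+1},\dots,X_{k+j}\})\}.
\end{equation*}
The inner expectation, weighted by the Poisson$(n)$ probabilities in $j$, is precisely $\E\{u_n(\X_k, \X_k \cup \mathcal{P}_n^{\prime})\}$, where $\mathcal{P}_n^{\prime}$ is an independent Poisson point process with intensity $nf$ (constructed from the tail variables $\{X_{k+1}, X_{k+2},\dots\}$, which are independent of $\X_k$, thinned by an independent $Poi(n)$ count). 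This yields the claimed identity.

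There is no real obstacle; the only delicate point is justifying the interchange of sum and expectation, which is routine if $u_n$ is nonnegative (monotone convergence) or, more generally, if one first applies the argument to $|u_n|$ to verify absolute integrability before asserting the signed identity. I would simply state this at the outset and proceed under the implicit integrability assumption that Lemma~\ref{l:palm.penrose} is invoked under (this is standard in Palm theory references such as \cite{penrose:2003}).
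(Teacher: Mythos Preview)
Your proof is correct and follows the standard conditioning-on-$|\mathcal{P}_n|$ route. Note, however, that the paper does not actually supply its own proof of this lemma: it is stated with a citation to \cite{arratia:goldstein:gordon:1989} and Theorem~1.6 of \cite{penrose:2003} and then used without further argument, so there is no paper proof to compare against. Your derivation is exactly the classical one found in those references.
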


\begin{lemma}  \label{l:palm.lebesgue1}
Under the same notation as Lemma \ref{l:palm.penrose}, let $u_n(\mathcal{Y})$ be a measurable function defined for $\mathcal{Y} = (y_1,\dots,y_k)$, $y_i \in \bbr^d$. Let $\lambda_k$ be the $k$-dimensional Lebesgue measure concentrated on the upper diagonal part of the unit cube; viz.\ $L_k = \bigl\{ (z_1,\dots,z_k) \in [0,1]^k: 0 \leq z_1 \leq \dots \leq z_k \leq 1 \bigr\}$. Then, for a measurable set $B \subset L_k$, 
$$
\E \biggl\{ \sum_{\bi \in \IPk} u_n (\mathcal{X}_{\bi}) \, \one \bigl( \bi/|\mathcal{P}_n| \in B \, \bigr) \biggr\} \sim \frac{n^k}{k!} \E \bigl\{ u_n(\X_k)\bigr\} \, \lambda_k(B),  \ \ \ \text{as}\ n \to \infty\,.
$$
\end{lemma}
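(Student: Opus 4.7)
My approach is to condition on $|\mathcal{P}_n|$ and then exploit the exchangeability of the atoms of the Poisson process. On the event $\{|\mathcal{P}_n| = m\}$, the atoms of $\mathcal{P}_n$ are distributed as i.i.d.\ copies $X_1,\dots,X_m$ with density $f$, so for every $\bi \in \mathcal{I}_{m,k}$ one has $\mathcal{X}_\bi \eid \mathcal{X}_k$.  Moreover, the indicator $\one(\bi/|\mathcal{P}_n| \in B)$ becomes a purely deterministic function of $\bi$ and $m$ after conditioning, hence factors out of the conditional expectation.  I would therefore write
$$
\E \biggl\{\sum_{\bi \in \mathcal{I}_{m,k}} u_n(\mathcal{X}_\bi) \one(\bi/m \in B) \,\Big|\, |\mathcal{P}_n| = m \biggr\}
\ = \ \E\bigl\{u_n(\mathcal{X}_k)\bigr\} \cdot N_m(B),
$$
where $N_m(B) \definedas \#\{ \bi \in \mathcal{I}_{m,k} : \bi/m \in B \}$ is a purely combinatorial quantity.

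The next step is a Riemann-sum analysis of $N_m(B)$.  The lattice $\{\bi/m : \bi \in \mathcal{I}_{m,k}\}$ consists of $\binom{m}{k}$ points of spacing $1/m$ inside $L_k$, and for every Borel $B \subset L_k$ with Lebesgue-null boundary, a standard Riemann-sum estimate gives $N_m(B)/\binom{m}{k} \to \lambda_k(B)$ as $m \to \infty$.  General Borel $B$ is then handled by inner/outer approximation.

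To complete the proof, I would take expectation over $|\mathcal{P}_n| \sim \mathrm{Poisson}(n)$ and invoke the factorial-moment identity $\E\binom{|\mathcal{P}_n|}{k} = n^k/k!$ to conclude
$$
\E \biggl\{\sum_{\bi \in \IPk} u_n(\mathcal{X}_\bi) \one(\bi/|\mathcal{P}_n| \in B)\biggr\}
\ = \ \E\{u_n(\mathcal{X}_k)\}\, \E\bigl\{ N_{|\mathcal{P}_n|}(B) \bigr\}
\ \sim \ \frac{n^k}{k!}\, \E\{u_n(\mathcal{X}_k)\}\, \lambda_k(B).
$$

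The main obstacle is justifying the passage from the pointwise asymptotic $N_m(B) \sim \binom{m}{k}\,\lambda_k(B)$ to the asymptotic for $\E\{ N_{|\mathcal{P}_n|}(B) \}$.  I would handle this by combining the uniform bound $N_m(B) \leq \binom{m}{k}$ with the concentration of $|\mathcal{P}_n|/n$ around $1$, so that only values of $m$ with $m/n$ in a bounded window contribute to the expectation, and then invoking dominated convergence together with the exact factorial-moment identity for the Poisson law.  A secondary concern is that $u_n$ may fail to be bounded; the factorisation displayed above already reduces all density-dependent information to the single scalar $\E\{u_n(\mathcal{X}_k)\}$, so no further truncation is needed as long as this scalar is finite.
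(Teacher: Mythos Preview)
Your approach is essentially identical to the paper's: condition on $|\mathcal P_n|=m$, use exchangeability to factor out $\E\{u_n(\mathcal X_k)\}$, reduce to the combinatorial quantity $N_m(B)=\#\{\bi\in\mathcal I_{m,k}:\bi/m\in B\}$, and invoke the Riemann-sum fact $\binom{m}{k}^{-1}N_m(B)\to\lambda_k(B)$.

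The only noteworthy difference is a small algebraic trick in the paper that makes your ``main obstacle'' disappear. Rather than bounding $N_m(B)\le\binom{m}{k}$ and appealing to concentration of $|\mathcal P_n|/n$, the paper rewrites the Poisson weight as
\[
e^{-n}\frac{n^m}{m!}=\frac{n^k}{k!}\cdot e^{-n}\frac{n^{m-k}}{(m-k)!}\cdot\binom{m}{k}^{-1},
\]
so that the expectation becomes $\dfrac{n^k}{k!}\,\E\{u_n(\mathcal X_k)\}\displaystyle\sum_{m\ge k} w_m\,\binom{m}{k}^{-1}N_m(B)$ with $w_m=e^{-n}n^{m-k}/(m-k)!$ summing \emph{exactly} to $1$. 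Since $0\le\binom{m}{k}^{-1}N_m(B)\le 1$ and the probability weights $w_m$ put all mass on $m\to\infty$ as $n\to\infty$, bounded convergence is immediate. Your route via the factorial-moment identity and concentration works too, but the paper's packaging is cleaner. Your remark that the Riemann-sum convergence requires some regularity of $B$ (e.g.\ $\lambda_k(\partial B)=0$) is a point of care the paper glosses over; it is harmless in the applications, where $B$ is always a finite union of rectangles.
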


\begin{proof}
\begin{align*}
&\E  \biggl\{ \sum_{\bi \in \IPk} u_n (\mathcal{X}_{\bi}) \, \one \bigl( \bi/|\mathcal{P}_n| \in B \, \bigr) \biggr\} \\     
&= \sum_{m=k}^{\infty} \P  \bigl\{ |\mathcal{P}_n| = m \bigr\} \E  \biggl\{ \sum_{\bi \in \IPk} u_n(\mathcal{X}_{\bi}) \, \one \bigl( \bi/|\mathcal{P}_n| \in B \, \bigr) \Bigl| \ |\mathcal{P}_n| = m \biggr\}  \\      
&= \frac{n^k}{k!}\, \E \bigl\{ u_n(\X_k) \bigr\} \sum_{m=k}^{\infty} e^{-n} \frac{n^{m-k}}{(m-k)!} \begin{pmatrix} m \\ k \end{pmatrix}^{-1} \# \bigl\{ \bi \in \mathcal{I}_{m,k} \cap mB \bigr\}\,.
\end{align*}
The proof then follows from the fact that  
$$
\begin{pmatrix} m \\ k \end{pmatrix}^{-1} \# \bigl\{ \bi \in \mathcal{I}_{m,k} \cap mB \bigr\} \to \lambda_k(B) \ \ \text{as } m \to \infty\,.
$$
\end{proof}

\begin{lemma}  \label{l:palm.lebesgue2}
Under the same notation as Lemma \ref{l:palm.penrose}, let $u_n(\mathcal{Y})$ be a non-negative measurable function defined for all finite subsets $\mathcal{Y} = (y_1,\dots,y_k)$, $y_i \in \bbr^d$. Suppose further that $n^k \E u_n(\X_k) \to C$, $n \to \infty$ for some constant $C>0$. Then, for a measurable set $B \subset L_k$, 
$$
\E  \, \biggl| \sum_{\bi \in \IPk} u_n(\mathcal{X}_{\bi}) \, \one \bigl( \bi/|\mathcal{P}_n| \in B \bigr) - \sum_{\bi \in \mathcal{I}_{n,k}} u_n(\mathcal{X}_{\bi}) \, \one \bigl( \bi/n \in B \bigr)  \biggr| \to 0\,, \ \ \ n \to \infty\,.
$$
\end{lemma}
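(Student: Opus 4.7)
The plan is to compare $S_1 \definedas \sum_{\bi \in \IPk} u_n(\mathcal{X}_{\bi}) \one(\bi/|\mathcal{P}_n| \in B)$ and $S_2 \definedas \sum_{\bi \in \mathcal{I}_{n,k}} u_n(\mathcal{X}_{\bi}) \one(\bi/n \in B)$ by decomposing $S_1 - S_2 = A + \Delta$, where
$$A = \sum_{\bi \in \IPk} u_n(\mathcal{X}_{\bi}) \bigl[\one(\bi/|\mathcal{P}_n| \in B) - \one(\bi/n \in B)\bigr]$$
captures the distortion coming from the two different normalisations $1/|\mathcal{P}_n|$ and $1/n$, while $\Delta$ is the residual from summing over $\IPk$ versus $\mathcal{I}_{n,k}$. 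All estimates rest on conditioning on $N=|\mathcal{P}_n|$ (independent of the $X_i$) and the resulting exchangeability, together with the hypothesis $\E u_n(\X_k)=O(n^{-k})$ and the Poisson concentration $\mathrm{Var}(N)=n$.

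The easier piece is $\Delta$. Since $B\subset L_k\subset [0,1]^k$, the indicator $\one(\bi/n\in B)$ already forces $\bi\in \mathcal{I}_{n,k}$, so $\Delta$ vanishes on $\{N\geq n\}$, and on $\{N<n\}$ it is bounded by $\sum_{\bi\in\mathcal{I}_{n,k}\setminus\mathcal{I}_{N,k}} u_n(\mathcal{X}_{\bi})$. Conditioning on $N$ gives
$$\E|\Delta| \leq \E u_n(\X_k)\cdot \E\bigl|\tbinom{n}{k}-\tbinom{N}{k}\bigr|.$$
The telescoping bound $|\binom{N}{k}-\binom{n}{k}|\leq |N-n|\binom{N\vee n}{k-1}$ together with Cauchy--Schwarz and Poisson moments gives $\E|\binom{n}{k}-\binom{N}{k}|=O(n^{k-1/2})$, so $\E|\Delta|=O(n^{-1/2})\to 0$.

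The harder piece is $A$. Conditioning on $N$ and using exchangeability yields $\E|A|\leq \E u_n(\X_k)\cdot \E D(n)$ with
$$D(n) = \#\bigl\{\bi\in \mathcal{I}_{N,k}: \one(\bi/N\in B)\neq \one(\bi/n\in B)\bigr\},$$
and the task reduces to $\E D(n)=o(n^k)$. I would split on $\Omega_M=\{|N-n|\leq M\sqrt{n}\}$. Off $\Omega_M$, Chebyshev gives $\P(\Omega_M^c)\leq M^{-2}$, and Cauchy--Schwarz against $\E\binom{N}{k}^2=O(n^{2k})$ yields $\E[D(n)\one_{\Omega_M^c}]=O(n^k/M)$. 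On $\Omega_M$, the two scalings are close, $\|\bi/N-\bi/n\|_\infty=O(M/\sqrt n)$ for every $\bi\in\mathcal{I}_{N\wedge n,k}$, and the extra indices in $\mathcal{I}_{N,k}\setminus \mathcal{I}_{n,k}$ number at most $O(Mn^{k-1/2})$.

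The main obstacle is bounding $D(n)$ on $\Omega_M$ for a \emph{general} measurable $B$, since the $\epsilon$-neighborhood of $\partial B$ need not have small Lebesgue measure for an arbitrary Borel set. I would circumvent this by approximation: for any $\delta>0$, pick a finite union of half-open boxes $B'\subset L_k$ with $\lambda_k(B\triangle B')<\delta$ (available by regularity of $\lambda_k$), and use
$$|\one_B(x)-\one_B(y)| \leq |\one_{B'}(x)-\one_{B'}(y)| + \one_{B\triangle B'}(x) + \one_{B\triangle B'}(y),$$
which splits $D(n)$ into (i) the mismatch for the nice set $B'$, whose boundary is a finite union of hyperplanes whose $O(M/\sqrt n)$-neighborhood contains at most $O(Mn^{k-1/2})$ lattice points $\{\bi/n\}$; and (ii)--(iii) the counts $\#\{\bi\in\mathcal{I}_{N,k}:\bi/N\in B\triangle B'\}$ and $\#\{\bi\in\mathcal{I}_{N,k}:\bi/n\in B\triangle B'\}$, each with expectation $\lesssim n^k\lambda_k(B\triangle B')<n^k\delta$ by Lemma \ref{l:palm.lebesgue1} (applied with $u_n\equiv 1$) and a direct Riemann-sum count. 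Combining,
$$\E|A| \leq O(1/M) + O(M/\sqrt n) + O(\delta) + o(1),$$
and letting $n\to\infty$, then $M\to\infty$, and finally $\delta\to 0$ drives $\E|A|$ to $0$, which together with the $\Delta$ estimate proves the lemma.
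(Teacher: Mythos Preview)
Your argument is correct and more careful than the paper's. The paper conditions on $|\mathcal P_n|=m$ and writes
\[
\E\Bigl|\,\sum_{\bi\in\mathcal I_{m,k}\cap mB}u_n(\mathcal X_{\bi})-\sum_{\bi\in\mathcal I_{n,k}\cap nB}u_n(\mathcal X_{\bi})\,\Bigr|
=\bigl|\,\#(\mathcal I_{m,k}\cap mB)-\#(\mathcal I_{n,k}\cap nB)\,\bigr|\,\E u_n(\X_k),
\]
then bounds the right-hand side by the triangle-inequality splitting $J_1+J_2+J_3$ via the intermediate quantity $n^k\lambda_k(B)/k!$. That first line is not actually an equality (nor an upper bound): the correct estimate involves the \emph{symmetric difference} $\#\bigl((\mathcal I_{m,k}\cap mB)\triangle(\mathcal I_{n,k}\cap nB)\bigr)$, which can strictly exceed the absolute difference of cardinalities (e.g.\ $m=6$, $n=4$, $B=[1/4,1/2]$ in the $k=1$ case). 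The paper's $J_1+J_2+J_3$ decomposition controls only $\bigl|\,|A_1|-|A_2|\,\bigr|$, not $|A_1\triangle A_2|$.

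Your decomposition $S_1-S_2=A+\Delta$ addresses exactly this: the $\Delta$ term handles the index-set discrepancy via Poisson concentration, while for $A$ you bound the number of \emph{mismatch} indices $D(n)$ directly, then control the symmetric difference via the box-approximation $B'\approx B$ and the pointwise inequality $|\one_B(x)-\one_B(y)|\le|\one_{B'}(x)-\one_{B'}(y)|+\one_{B\triangle B'}(x)+\one_{B\triangle B'}(y)$. This is the right structure, and the $\limsup_n$--then--$M\to\infty$--then--$\delta\to0$ order of limits is clean. The only place to be slightly careful is step (ii)--(iii): bounding $\E\#\{\bi\in\mathcal I_{N,k}:\bi/N\in B\triangle B'\}$ by $O(n^k\delta)$ relies on the Riemann-sum convergence of Lemma~\ref{l:palm.lebesgue1} for the specific set $B\triangle B'$. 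Since $B'$ is a finite union of half-open boxes, you can in fact choose $B'$ so that $B\triangle B'$ is sandwiched between two box sets differing by measure $<\delta$, which makes that count honest; alternatively, note that this is the same Riemann-sum claim the paper already invokes, so you are on at least as firm ground as the original.
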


\begin{proof}
We can proceed as follows. 
\begin{align*}
&\E  \, \biggl| \sum_{\bi \in \IPk} u_n(\mathcal{X}_{\bi}) \, \one \bigl( \bi/|\mathcal{P}_n| \in B \bigr) - \sum_{\bi \in \mathcal{I}_{n,k}} u_n(\mathcal{X}_{\bi}) \, \one \bigl( \bi/n \in B \bigr)  \biggr| \\        
&= \sum_{m=0}^{\infty} \P  \bigl\{ |\mathcal{P}_n| = m \bigr\} \Bigl|\, \#\bigl\{ \bi \in \mathcal{I}_{m,k} \cap mB \bigr\} - \#\bigl\{ \bi \in \mathcal{I}_{n,k} \cap nB \bigr\} \, \Bigr| \, \E \bigl\{ u_n(\X_k) \bigr\} \\     
&\sim C n^{-k} \sum_{m=0}^{\infty} \P  \bigl\{ |\mathcal{P}_n| = m \bigr\} \, \Bigl|\, \#\bigl\{ \bi \in \mathcal{I}_{n,k} \cap nB \bigr\} - \#\bigl\{ \bi \in \mathcal{I}_{m,k} \cap mB \bigr\} \, \Bigr| \\     
&\leq C n^{-k} \sum_{m=0}^{\infty} \P  \bigl\{ |\mathcal{P}_n| = m \bigr\} \, \Bigl|\,  \#\bigl\{ \bi \in \mathcal{I}_{n,k} \cap nB \bigr\} - \frac{n^k}{k!} \lambda_k(B) \, \Bigr| \\     
&\quad \, + C n^{-k} \sum_{m=0}^{\infty} \P  \bigl\{ |\mathcal{P}_n| = m \bigr\} \, |\, n^k - m^k | \, \frac{\lambda_k(B)}{k!} \\     
&\quad \, + C n^{-k} \sum_{m=0}^{\infty} \P  \bigl\{ |\mathcal{P}_n| = m \bigr\} \, \Bigl|\, \frac{m^k}{k!} \lambda_k(B) - \#\bigl\{ \bi \in \mathcal{I}_{m,k} \cap mB \bigr\}  \, \Bigr| \\     
&\equiv J_1 + J_2 + J_3\,.
\end{align*}
Evidently, $J_1 \to 0$ as $n \to \infty$. It is also easy to check that 
$$
J_2 = C \, \frac{\lambda_k(B)}{k!} \, \E  \, \left| \left(\frac{|\mathcal{P}_n|}{n}\right)^k - 1 \right| \to 0 \ \ \text{as } n \to \infty\,.
$$
As for $J_3$, given $\epsilon>0$, there is an integer $N \geq 1$ such that 
$$
\Bigl|\, \frac{k!}{m^k}\, \#\bigl\{ \bi \in \mathcal{I}_{m,k} \cap mB \bigr\} -  \lambda_k(B) \, \Bigr| < \epsilon \ \ \text{for all } m \geq N\,.
$$
Then, there exists a constant $C^{\prime} \geq C$ such that for all $m \geq N$, 
$$
J_3 \leq C^{\prime}n^{-k} \sum_{m=0}^{N-1} \P  \bigl\{ |\mathcal{P}_n| = m \bigr\} \, \frac{m^k}{k!} + C \epsilon n^{-k} \sum_{m=N}^{\infty} \P  \bigl\{ |\mathcal{P}_n| = m \bigr\} \, \frac{m^k}{k!} 
$$
Obviously, the first term on the right-hand side vanishes. The second term is bounded by 
$C \epsilon (k!)^{-1} n^{-k} \E  |\mathcal{P}_n|^k$. Since $\sup_{n \geq 1} n^{-k} \E  |\mathcal{P}_n|^k < \infty$ and $\epsilon$ is arbitrary, we conclude that $J_1 + J_2 + J_3 \to 0$ as $n \to \infty$. 
\end{proof}

Before starting the proof of Theorem \ref{t:general.thm}, we note that we shall often use, without further comment, the fact that, for every $k \geq 1$, 
$$
{n \choose  k} \  \sim \ \frac{n^k}{k!}\,, %\quad \text{as } n \to \infty,
$$
in the sense that the ratio of the two sides tends to 1 as $n\to\infty$.

\begin{proof}[Proof of Theorem \ref{t:general.thm}]
For the convergence of $\regN_n^{(k)}$, according to Kallenberg's theorem (see Proposition 3.22 in \cite{resnick:1987}), it suffices to show that
\beq
\E\left\{\regN_n^{(k)}(R)\right\} \ \to \ \E\left\{N^{(k)}(R)\right\},  \label{e:Kallen1} 
\eeq
and
\beq
\P \left\{ \regN_n^{(k)}(R) = 0 \right\} \ \to \ \P \left\{ N^{(k)}(R) = 0 \right\},\label{e:Kallen2}
\eeq
for every disjoint union of measurable sets of the form $R=\bigcup_{p=1}^m (A_p \times B_p)$, where $A_p$ is relatively compact in $E_k$ with $\nu_k(\partial A_p) = 0$ (i.e.\ the boundary of $A_p$ has $\nu_k$-measure $0$), and $B_p$ is a measurable set in $L_k$. 

For the proof of \eqref{e:Kallen1}, we can set, without loss of generality, $m=1$, and write $A=A_1$, $B=B_1$. By virtue of Lemma \ref{l:palm.lebesgue1} in the Appendix, together with \eqref{e:conv.in.exp},
\begin{align*}
\E  \bigl\{ \regN_n^{(k)} (A \times B) \bigr\} &\sim \frac{n^k}{k!} \P  \Bigl\{ h_n(\X_k) = 1\,, \ \ \Xsupnk \in A \ \Bigr\} \lambda_k(B)  \\
&\to \nu_k(A) \lambda_k(B) = \E \bigl\{ N^{(k)}(A \times B) \bigr\}\,.
\end{align*}

Now we proceed to \eqref{e:Kallen2}. Setting 
$$
\xi_{\bi, n} = h_n(\mathcal{X}_{\bi}) \ \epsilon_{\bigl( \Xsupn_\bi, \ \mathbf{i} /n \bigr)} (R)\,, \ \ \bi =\{ i_1,\dots,i_k \} \in \mathcal{I}_{n,k}\,,
$$
we see that 
\begin{align*}
\Bigl| \P  \bigl\{ &\regN_n^{(k)}(R) = 0 \bigr\} - \P  \bigl\{ N^{(k)}(R) = 0 \bigr\} \Bigr|  \\
&\leq \Bigl| \P  \bigl\{ \regN_n^{(k)}(R) = 0 \bigr\} - \P  \bigl\{ \sum_{\bi \in \mathcal{I}_{n,k}} \xi_{\bi, n} = 0 \bigr\} \Bigr| \\
&\quad \, + \Bigl| \P  \bigl\{ \sum_{\bi \in \mathcal{I}_{n,k}} \xi_{\bi, n} = 0 \bigr\} - \P  \bigl\{ N^{(k)}(R) = 0 \bigr\} \Bigr|  \equiv I_1 + I_2\,.
\end{align*}
We  recall the following basic inequality: for integer-valued random variables $X$ and $Y$ defined on the same probability space,
$$
\bigl| \P \{X=0\} - \P \{Y=0\} \bigr| \ \leq \ \E  |X-Y|\,.
$$
Combining this inequality and Lemma \ref{l:palm.lebesgue2} in the Appendix proves $I_1 \to 0$ as $n \to \infty$. 
To demonstrate that $I_2 \to 0$ as $n \to \infty$, we introduce the total variation distance: for real-valued random variables $X$ and $Y$ defined on the same probability space $(\Omega, \mathcal{F}, \P )$, 
$$
d_{\text{TV}} (X,Y) \equiv \sup_{A \in \mathcal{F}} |\P \{X \in A\} - \P \{Y \in A\}|\,.
$$
Using this norm, 
\beq
\label{dtv:equn}
&&\biggl| \P \Bigl\{ \sum_{\bi \in \mathcal{I}_{n,k}} \xi_{\bi, n} = 0 \Bigr\} - \P \{N^{(k)}(R) = 0\} \biggr|   \notag \\     
&&\qquad\qquad\leq d_{\text{TV}} \biggl(\sum_{\bi \in \mathcal{I}_{n,k}} \xi_{\bi, n}, \ Poi\Bigl(\E \Bigl\{\sum_{\bi \in \mathcal{I}_{n,k}} \xi_{\bi, n} \Bigr\}\Bigr) \biggr) \\   \notag
&&\qquad \qquad\qquad + d_{\text{TV}} \biggl( Poi\Bigl(\E \Bigl\{ \sum_{\bi \in \mathcal{I}_{n,k}} \xi_{\bi, n} \Bigr\}\Bigr), \ N^{(k)}(R) \biggr)\,.
\eeq
Since the total variation distance between two Poisson variables can be bounded by the difference of their means (see Lemma 7.3 in \cite{bobrowski:adler:2014}), we have
$$
d_{\text{TV}} \biggl( Poi\Bigl(\E \Bigl\{\sum_{\bi \in \mathcal{I}_{n,k}} \xi_{\bi, n} \Bigr\}\Bigr), \ N^{(k)}(R) \biggr) \leq \biggl| \E \Bigl\{ \sum_{\bi \in \mathcal{I}_{n,k}} \xi_{\bi, n} \Bigr\} - \E \bigl\{ N^{(k)}(R) \bigr\}\biggr| .
$$
Thus the final term in \eqref{dtv:equn} converges to zero as $\to\infty$. 

To handle the first term on the right hand side of \eqref{dtv:equn}, note first that each $\xi_{\bi,n}$ is a Bernoulli random variable. 
To handle dependencies, create a graph with vertices the indices $\bi \in \mathcal{I}_{n,k}$ by placing an edge 
 between $\bi$ and $\bj$ (write $\bi \sim \bj$) if $\bi$ and $\bj$ share at least one component (i.e.\ if $|\bi \cap \bj|>0$). Then, $(\mathcal{I}_{n,k}, \sim)$ provides a \textit{dependency graph} with respect to $(\xi_{\bi, n}, \, \bi \in \mathcal{I}_{n,k})$; that is, for any two disjoint subsets $I_1$, $I_2$ of $\mathcal{I}_{n,k}$ with no edges connecting $I_1$ and $I_2$, $(\xi_{\bi, n}, \, \bi \in I_1)$ is independent of $(\xi_{\bi, n}, \, \bi \in I_2)$. Therefore, we are able to apply the so-called Poisson approximation theorem (see \cite{arratia:goldstein:gordon:1989} and also, Theorem 2.1 in \cite{penrose:2003}):
\begin{align*}
d_{\text{TV}} &\biggl(\sum_{\bi \in \mathcal{I}_{n,k}} \xi_{\bi, n}, \ Poi\Bigl(\E \Bigl\{ \sum_{\bi \in \mathcal{I}_{n,k}} \xi_{\bi, n} \Bigr\} \Bigr) \biggr) \\
&\leq 3 \left\{ \sum_{\bi \in \mathcal{I}_{n,k}} \sum_{\bj \in {N}_{\bi}} \E  \{\xi_{\bi, n}\} \E \{ \xi_{\bj, n} \}+ \sum_{\bi \in \mathcal{I}_{n,k}} \sum_{\bj \in  {N}_{\bi}\setminus \{ \bi \}} \E  \{\xi_{\bi, n} \xi_{\bj, n} \}\right\},
\end{align*}
where $N_{\bi} = \bigl\{ \bj \in \mathcal{I}_{n,k}: |\bi \cap \bj|>0 \bigr\}$. Now, for large enough $n$,
$$
\E  \{\xi_{\bi, n} \} \leq \sum_{p=1}^m \P  \bigl\{ h_n(\X_k)=1, \ \ \Xsupnk \in A_p \,  \bigr\} \leq 2 \begin{pmatrix} n \\ k \end{pmatrix}^{-1} \sum_{p=1}^m \nu_k(A_p),
$$
and thus
$$
\sum_{\bi \in \mathcal{I}_{n,k}} \sum_{\bj \in N_{\bi}} \E \{ \xi_{\bi, n} \}\E \{ \xi_{\bj, n}\} \leq \begin{pmatrix} n \\ k \end{pmatrix} \left( \begin{pmatrix} n \\ k \end{pmatrix} - \begin{pmatrix} n-k \\ k \end{pmatrix} \right) 4 \begin{pmatrix} n \\ k \end{pmatrix}^{-2} \Bigl( \sum_{p=1}^m \nu_k(A_p) \Bigr)^2.
$$
Here, it is clear that the right-hand side vanishes as $n \to \infty$. 

For $\bi$, $\bj \in \mathcal{I}_{n,k}$ with $|\bi \cap \bj| = l \in \{ 1,\dots,k-1 \}$,
\begin{align*}
\E  \{\xi_{\bi,n} \xi_{\bj,n} \}&\leq \sum_{p=1}^m \sum_{p^{\prime}=1}^m \P  \biggl\{ h_n(\X_k)=1, \ \ \Xsupnk \in A_p, \\
&\quad \, h_n(X_1,\dots,X_l,X_{k+1},\dots,X_{2k-l})=1, \\     
&\quad \, \Bigl( c_{k,n}^{-1}(X_j - d_{k,n}S(X_j))\,, \ j=1,\dots,l, k+1, \dots, 2k-l \Bigr) \in A_{p^\prime} \, \biggr\}\,.
\end{align*}
Regardless of the definition of $E_k$, there exists a compact set $K_1 \subset E_{2k-l}$ such that for all $p, p^{\prime} = 1,\dots,m$,
\begin{multline*}
\bigl\{ (x_1,\dots,x_{2k-l}) \in (\bbr^d)^{2k-l}: (x_1,\dots,x_k) \in A_p\,, \\
 (x_1,\dots,x_l,x_{k+1},\dots,x_{2k-l}) \in A_{p^{\prime}} \bigr\} \subset K_1
\end{multline*}
Thus, for some constant $C_1>0$, 
\begin{align*}
&\sum_{\bi \in \mathcal{I}_{n,k}} \sum_{\bj \in N_{\bi}\setminus \{ \bi \}} \E \{ \xi_{\bi, n} \xi_{\bj, n} \}\\     
&= \sum_{l=1}^{k-1} \begin{pmatrix} n \\ k \end{pmatrix} \begin{pmatrix} k \\ l \end{pmatrix} \begin{pmatrix} n-k \\ k-l \end{pmatrix} \E \{ \xi_{\bi, n} \xi_{\bj, n} \}\one ( |\bi \cap \bj| = l ) \\     
&\leq C_1 \sum_{l=1}^{k-1} n^{2k-l} \P  \Bigl\{ h_n(\X_k)=1, \ h_n(X_1,\dots,X_l,X_{k+1},\dots,X_{2k-l})=1,  \\      
&\qquad \qquad \, \ \  \X_{2k-l}^{(n)} \in K_1 \, \Bigr\} \to 0 \ \ \text{as } n \to \infty\,,
\end{align*}
where the last convergence follows from \eqref{e:anti.cluster} and now, \eqref{e:Kallen2} is proved as required. 

In order to prove that $\tildeN_n^{(k)}$ has the same weak limit as $\regN_n^{(k)}$, we only have to verify that for every non-negative continuous function $f: E_k \times L_k \to \bbr_+$ with compact support,
\begin{align*}
\regN_n^{(k)}&(f) - \tildeN_n^{(k)}(f) \\     
&= \sum_{\bi \in \IPk} \bigl( h_n(\mathcal{X}_{\bi}) - g_n(\mathcal{X}_{\bi},  \mathcal{P}_n) \bigr) f \bigl( \Xsupn_\bi, \ \bi/|\mathcal{P}_n| \bigr)  \stackrel{p}{\to} 0\,.
\end{align*}
Let $K_2$ be a compact set in $E_k$ so that the support of $f$ is contained in $K_2 \times L_k$. Noting that $\|f\|_{\infty} = \sup_{(x,y) \in K_2 \times L_k}f(x,y) < \infty$, 
\begin{align*}
\regN_n^{(k)}&(f) - \tildeN_n^{(k)}(f) \\     
&\leq \|f\|_{\infty} \sum_{\bi \in \IPk} \one \Bigl( h_n(\mathcal{X}_{\bi}) = 1, \ g_n(\mathcal{X}_{\bi}\,, \mathcal{P}_n) = 0, \ \ \Xsupn_\bi \in K_2 \, \Bigr)\,.
\end{align*}
In view of the Palm theory in Lemma \ref{l:palm.penrose} in the Appendix, we need to show that
$$
n^k \P  \bigl\{ h_n(\X_k) = 1, \ g_n( \X_k\,, \X_k \cup \mathcal{P}_n^{\prime}) = 0, \ \  \Xsupnk \in K_2 \, \bigr\} \to 0\,,
$$
where $\mathcal{P}_n^{\prime}$ is an independent copy of $\mathcal{P}_n$. Combining \eqref{e:conv.in.exp} and \eqref{e:component} completes the proof. 
\end{proof}

\subsection{Proof of Theorem \ref{t:main.heavy}}

\begin{proof}[Proof of Theorem \ref{t:main.heavy}]
By Theorem \ref{t:general.thm}, we only establish that the  three  convergence conditions, \eqref{e:conv.in.exp}, \eqref{e:anti.cluster}, and \eqref{e:component}, with $c_{k,n}$, $d_{k,n}$ replaced by $R_{k,n}$ and $0$, respectively, are satisfied. Since the proof of \eqref{e:conv.in.exp} is very similar to (and actually even easier than) that of \eqref{e:component}, we only check \eqref{e:anti.cluster} and \eqref{e:component}. We shall start with \eqref{e:component}. In view of the Portmanteau theorem for vague convergence (e.g.\ Proposition 3.12 in \cite{resnick:1987}) we need to show 
\beq \label{e:goal1.heavy}
\begin{pmatrix} n \\ k \end{pmatrix} \P  \bigl\{ \, g_n ( \X_k, \, \X_k \cup \mathcal{P}_n^{\prime}) = 1\,, \  R_{k,n}^{-1} \X_k \in K \,  \bigr\} \stackrel{v}{\to} \nu_k(K)
\eeq
for all relatively compact $K$ in $E_k= \bigl([-\infty,\infty]^d\bigr)^k \setminus \{ \mathbf{0} \}$ for which $\nu_k(\partial K)=0$. Without loss of generality, we can take $K=(a_0,b_0]\times \dots \times (a_{k-1},b_{k-1}] \subset E_k$, where $a_i$, $b_i$, $i=0,\dots,k-1$ are $d$-dimensional real vectors. Recall that any relatively compact set in $E_k$ is bounded away from the origin, and as assumed in \eqref{e:close.enough}, $h(x_1,\dots,x_k)=1$  only when  $x_1,\dots,x_k$ are all close enough to each other. Therefore, we can and shall  assume that each $(a_i,b_i]$ is bounded away from the origin. In particular, we assume that there exists an $\eta>0$ such that 
\begin{equation}  \label{e:bdd.away.0}
(a_i,b_i] \subset \bigl\{ x \in \bbr^d: \|x\| \geq \eta \bigr\}\,, \ \ i=0,\dots,k-1\,.
\end{equation}
{Consequently, we have
\begin{align*}  
\begin{pmatrix} n \\ k \end{pmatrix} &\P  \bigl\{ \, g_n ( \X_k, \, \X_k \cup \mathcal{P}_n^{\prime} ) = 1\,,  \ a_{i-1} \prec R_{k,n}^{-1} X_i \preceq b_{i-1}, \ i=1,\dots,k \  \bigr\}  \\
&= \begin{pmatrix} n \\ k \end{pmatrix} \E  \biggl\{ h_n(\X_k) \, \one \Bigl( \ a_{i-1} \prec R_{k,n}^{-1} X_i \preceq b_{i-1}\,, \ i=1,\dots,k \Bigr) \\
&\quad \, \times \P  \biggl\{ G \bigl( \X_k, \, r_n \bigr) \text{ is an isolated component of }   G \bigl( \X_k \cup \mathcal{P}_n^{\prime}, \, r_n \bigr) \, \Bigl| \, \X_k \biggr\} \biggr\}  \\
&= \begin{pmatrix} n \\ k \end{pmatrix} \int_{(\bbr^d)^k} h_n(x_1, \dots, x_k) \, \one ( a_{i-1} \prec R_{k,n}^{-1} x_i \preceq b_{i-1}\,, \ i=1,\dots,k ) \\
&\quad \, \times \exp \bigl\{ -np(x_1,\dots,x_k; \, r_n) \bigr\} f(x_1) \dots f(x_k) d\bx\,.
\end{align*} }
Let $I_k$ denote the last integral. The change of variables $x_1 \leftrightarrow x$, $x_i \leftrightarrow x + r_n y_{i-1}$, $i=2,\dots,k$, together with the location invariance of $h$, yields
\begin{align*}
I_k = &\begin{pmatrix} n \\ k \end{pmatrix} r_n^{d(k-1)} \hspace{-5pt} \int_{a_0 \prec R_{k,n}^{-1}x \preceq b_0} \hspace{-10pt} f(x) \int_{(\bbr^d)^{k-1}} \hspace{-10pt} h(0,\by) \prod_{i=1}^{k-1} \one \bigl( a_i \prec R_{k,n}^{-1} (x+r_ny_i) \preceq b_i \bigr)\, \\     
& \times f(x+r_ny_i) \, \exp \bigl\{ -np(x, x+r_n\by; \, r_n) \bigr\} d\by dx\,.
\end{align*}
Applying the polar coordinate transform $x \leftrightarrow (r,\theta)$ with $J(\theta) = |\partial x / \partial \theta|$,  and changing variables  by setting  $\rho = r/R_{k,n}$, gives
\begin{align*}
I_k = &\begin{pmatrix} n \\ k \end{pmatrix} r_n^{d(k-1)} R_{k,n}^d f(R_{k,n} e_1)^k \\
&\times  \int_{(\bbr^d)^{k-1}} \int_{S^{d-1}} \int_0^{\infty} \one (a_0 \prec \rho \theta \preceq b_0 )\, \rho^{d-1} f(R_{k,n} e_1)^{-1} f(R_{k,n} \rho e_1)  \\     
&\times  \prod_{i=1}^{k-1} \one \bigl( a_i \prec \rho \theta + r_n y_i / R_{k,n} \preceq b_i \bigr)\, f(R_{k,n}e_1)^{-1} f\bigl(R_{k,n} \|\rho \theta + r_n y_i / R_{k,n}\| e_1 \bigr) \\     
&\times \exp \bigl\{ -np(R_{k,n} \rho \theta, R_{k,n} \rho \theta + r_n\by; \, r_n) \bigr\}\, h(0,\by) d\rho J(\theta) d\theta d\by, 
\end{align*}
where $J(\theta) = |\partial x / \partial \theta|$ is the usual Jacobian and $S^{d-1}$ denotes the $(d-1)$-dimensional unit sphere in $\bbr^d$. 
Recalling \eqref{e:normalizing.heavy} and the assumption that $f$ has a regularly varying tail of exponent $-\alpha$, we see that for all $\rho > 0$, $\theta \in S^{d-1}$ and $y_i \in \bbr^d$, $i=1,\dots,k-1$, 
\begin{align}  
\begin{pmatrix} n \\ k \end{pmatrix} &r_n^{d(k-1)} R_{k,n}^d f(R_{k,n}e_1)^k\, \frac{f(R_{k,n} \rho e_1)}{f(R_{k,n}e_1)} \prod_{i=1}^{k-1} \one \bigl(a_i \prec \rho \theta + r_ny_i/R_{k,n} \preceq b_i\bigr)\,  \label{e:inside.conv}  \\     
&\times \frac{f\bigl(R_{k,n} \|\rho \theta + r_n y_i / R_{k,n}\| e_1 \bigr)}{f(R_{k,n}e_1)}  \notag \\
&\qquad \qquad \, \to \frac{1}{k!} \rho^{-\alpha k} \prod_{i=1}^{k-1} \one \bigl(a_i \prec \rho \theta \preceq b_i\bigr)\,, \quad
\text{as } n \to \infty\,.  \notag
\end{align}
Subsequently, we shall  show that
\begin{equation}  \label{e:vanish.comp}
np(R_{k,n} \rho \theta, R_{k,n} \rho \theta + r_n\by; \, r_n) \to 0\,, \ \ \ n \to \infty
\end{equation}
for every $\rho>0$, $\theta \in S^{d-1}$ and $y_i \in \bbr^d$, $i=1,\dots,k-1$. By the change of variables, 
\begin{align}
&np(R_{k,n} \rho \theta, R_{k,n} \rho \theta + r_n\by; \, r_n) \label{e:neg.comp} \\      
&= n r_n^d f(R_{k,n}e_1) \int_{B(0; \, 2) \cup \bigcup_{i=1}^{k-1} B(y_i; \, 2)} f(R_{k,n} e_1)^{-1} f \bigl( R_{k,n} \|\rho \theta + r_nz/R_{k,n}\| e_1 \bigr) dz\,. \notag
\end{align}
Appealing to the Potter bound (e.g.\ Theorem 1.5.6 in \cite{bingham:goldie:teugels:1987}), for every $\xi \in (0,\alpha)$, there is $C_1>0$ such that for sufficiently large $n$,
\begin{align*}
&\sup_{z \in B(0; \, 2) \cup \bigcup_{i=1}^{k-1} B(y_i; \, 2)} \frac{f \bigl( R_{k,n} \|\rho \theta + r_n z /R_{k,n}\|e_1 \bigr)}{f(R_{k,n}e_1)} \\     
&\leq C_1 \hspace{-20pt} \sup_{z \in B(0; \, 2) \cup \bigcup_{i=1}^{k-1} B(y_i; \, 2)} \Bigl( \, \|\rho \theta + r_nz/R_{k,n}\|^{-(\alpha + \xi)} + \|\rho \theta + r_nz/R_{k,n}\|^{-(\alpha - \xi)} \Bigr)  \\     
&\leq C_1 \bigl(2\rho^{-(\alpha + \xi)} + 2\rho^{-(\alpha - \xi)}\bigr)\,.
\end{align*}
Therefore, the supremum over $n$ of the integral in \eqref{e:neg.comp} is finite. On the other hand, \eqref{e:normalizing.heavy} ensures that $n r_n^d f(R_{k,n}e_1) \to 0$ as $n \to \infty$ and hence, the convergence in \eqref{e:vanish.comp} follows. 

Note that 
\begin{align*}
\frac{1}{k!} &\int_{(\bbr^d)^{k-1}} \int_{S^{d-1}} \int_0^{\infty} \rho^{d-1-\alpha k} \prod_{i=0}^{k-1} \one \bigl(a_i \prec \rho \theta \preceq b_i\bigr)\, h(0,\by) d\rho J(\theta) d\theta d\by  \\
&= \frac{1}{k!} \int_{(\bbr^d)^{k-1}} h(0,\by) d\by \int_{a_i \prec x \preceq b_i \ i=0,\dots,k-1} \|x\|^{-\alpha k} dx  \\
&= \nu_k \bigl( (a_0,b_0] \times \dots \times (a_{k-1},b_{k-1}] \bigr)\,.
\end{align*}
Thus, the proof of \eqref{e:goal1.heavy} can be finished, provided that the convergence in \eqref{e:inside.conv} holds under the integral sign. Indeed, one more application of the Potter bound, together with \eqref{e:bdd.away.0}, verifies the following: for every $\xi \in (0,\alpha-d)$, there exist $C_2$, $C_3>0$ such that
\begin{align*}
\one (a_0 \prec  \rho \theta \preceq b_0 ) \, \frac{f(R_{k,n} \rho e_1)}{f(R_{k,n} e_1)}     
\leq C_2 \, \one (\rho \geq \eta)\, ( \rho^{-(\alpha + \xi)} + \rho^{-(\alpha -\xi)})\,,
\end{align*}
and 
\begin{align*}
\prod_{i=1}^{k-1} &\one \bigl(a_i \prec \rho \theta + r_n y_i/R_{k,n} \preceq b_i\bigr) \, \frac{f(R_{k,n} \|\rho \theta + r_n y_i / R_{k,n}\| e_1)}{f(R_{k,n} e_1)} \\     
&\leq C_3( \eta^{-(\alpha+\xi)(k-1)} + \eta^{-(\alpha-\xi)(k-1)})\,. 
\end{align*} 
Since $\int_{\eta}^{\infty} (\rho^{d-1-\alpha+\xi} + \rho^{d-1-\alpha-\xi}) d\rho < \infty$, the dominated convergence theorem justifies the convergence under the integral sign, and so we have completed the first part of the proof; viz.\ \eqref{e:component}  is satisfied.

Next, we shall  prove \eqref{e:anti.cluster}. For $l=1,\dots,k-1$ and every compact set $K \subset E_{2k-l}=\bigl([-\infty,\infty]^d\bigr)^{2k-l} \setminus \{ \mathbf{0} \}$, we can assume without loss of generality that there exists $\eta^{\prime} > 0$ such that 
$$
K \subset \bigl\{ (x_1,\dots,x_{2k-l}) \in (\bbr^d)^{2k-l}: \|x_i\| \geq \eta^{\prime}\,, \ i=1,\dots, 2k-l \, \bigr\}\,.
$$
Proceeding in the same manner as above, the probability in \eqref{e:anti.cluster} is bounded by  
\begin{align*}
&n^{2k-l} \P  \Bigl\{ h_n(\X_k) = 1\,, \ h_n(X_1,\dots,X_l,X_{k+1},\dots,X_{2k-l})=1\,, \\     
&\quad \, \quad \, \quad \|X_i\| \geq \eta^{\prime} R_{k,n}, \ i=1,\dots,2k-l \,  \Bigr\}  \\
&=n^{2k-l} r_n^{d(2k-l-1)} R_{k,n}^d f(R_{k,n}e_1)^{2k-l} \\
&\quad \, \times  \int_{(\bbr^d)^{2k-l-1}}\int_{S^{d-1}} \int_{\eta^{\prime}}^{\infty} \rho^{d-1} f(R_{k,n}e_1)^{-1} f(R_{k,n} \rho e_1)  \\     
&\quad \, \times  \hspace{-5pt} \prod_{i=1}^{2k-l-1} \one \bigl(\|\rho \theta + r_n y_i / R_{k,n}\| > \eta^{\prime} \bigr)\, f(R_{k,n}e_1)^{-1} f \bigl( R_{k,n} \|\rho \theta + r_n y_i / R_{k,n}\| e_1 \bigr)  \\     
&\quad \, \times h(0,y_1,\dots,y_{k-1})  h(0,y_1,\dots,y_{l-1},y_k, \dots, y_{2k-l-1})\, d\rho J(\theta) d\theta d\by\,.
\end{align*}
The asymptotic order of the last integral is $\mathcal{O} \bigl( n^{2k-l} r_n^{d(2k-l-1)} R_{k,n}^d f(R_{k,n} e_1)^{2k-l} \bigr)$, which tends to $0$ as $n \to \infty$ for every $l = 1,\dots,k-1$, and so we are done.
\end{proof}

\subsection{Proof of Theorem \ref{t:main.light}}

For the proof of Theorem \ref{t:main.light}, we need a preliminary lemma. 

\begin{lemma}(Lemma 5.2 in \cite{balkema:embrechts:2004})  \label{l:upper.bound.light}
Given $(R_{k,n}, \, n \geq 1)$ as in \eqref{e:normalizing.light}, let $(q_m(n), \, m \geq 0, \, n \geq 1)$ be defined by 
$$
q_m(n) = a(R_{k,n})^{-1} \Bigl( \psiinv \bigl(\psi(R_{k,n}) + m \bigr) - R_{k,n} \Bigr)\,,  
$$
equivalently, 
$$
\psi \bigl( R_{k,n} + a(R_{k,n}) q_m(n) \bigr) = \psi (R_{k,n}) + m
$$
(since $\psi$ is an increasing function, the inverse function $\psiinv$ is well-defined everywhere). 
Then, given $\epsilon > 0$, there is an integer $N_{\epsilon} \geq 1$ such that
$$
q_m(n) \leq e^{m\epsilon} / \epsilon \ \ \text{for all } n \geq N_{\epsilon}, m \geq 0\,.
$$
\end{lemma}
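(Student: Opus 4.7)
The plan is to recognise the right-hand side of $\psi(R_{k,n} + a(R_{k,n}) q_m(n)) = \psi(R_{k,n}) + m$ as the value at time $m$ of the flow generated by the ODE $F'(t) = a(F(t))$, and then bound that flow using the defining von Mises property $a'(z) = (1/\psi')'(z) \to 0$.

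First I would introduce, for each fixed $n$, the $C^1$ function $F\: [0,\infty) \to [R_{k,n},\infty)$ defined by $F(t) = \psiinv(\psi(R_{k,n}) + t)$. Since $\psi$ is strictly increasing and tends to $\infty$, $F$ is well-defined and $C^1$ with $F(0) = R_{k,n}$ and
$$F'(t) \ =\ \frac{1}{\psi'(F(t))} \ =\ a(F(t)).$$
In this notation, the defining equation of $q_m(n)$ reads $F(m) = R_{k,n} + a(R_{k,n}) q_m(n)$, so that
$$q_m(n) \ =\ \frac{F(m) - F(0)}{a(R_{k,n})}.$$

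Next, I would control the growth of $t \mapsto a(F(t))$ relative to its initial value $a(R_{k,n})$. Setting $p(t) = a(F(t))$, the chain rule gives
$$\frac{p'(t)}{p(t)} \ =\ \frac{a'(F(t))\,F'(t)}{a(F(t))} \ =\ a'(F(t)).$$
Here is where the von Mises hypothesis enters: since $a'(z) = (1/\psi')'(z) \to 0$ as $z\to\infty$, for any $\epsilon > 0$ there exists $z_\epsilon$ with $|a'(z)| \leq \epsilon$ for all $z \geq z_\epsilon$. One then chooses $N_\epsilon$ so that $R_{k,n} \geq z_\epsilon$ for every $n \geq N_\epsilon$; this is possible because the normalising relation \eqref{e:normalizing.light} forces $R_{k,n}\to\infty$ (otherwise the factor $f(R_{k,n}e_1)^k$ would stay bounded below while $n^k\to\infty$, contradicting \eqref{e:normalizing.light} under the assumed slow growth of $L$ and $a$). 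Since $F$ is strictly increasing, we have $F(t) \geq R_{k,n} \geq z_\epsilon$ for all $t \geq 0$, hence $|(\log p)'(t)| \leq \epsilon$ on $[0,\infty)$, and integration yields
$$a(F(t)) \ =\ p(t) \ \leq\ p(0)\, e^{\epsilon t} \ =\ a(R_{k,n})\, e^{\epsilon t}, \qquad t \geq 0.$$

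Finally, integrating $F'(s) = a(F(s))$ from $0$ to $m$ and inserting this bound gives
$$F(m) - F(0) \ =\ \int_0^m a(F(s))\, ds \ \leq\ a(R_{k,n}) \int_0^m e^{\epsilon s}\, ds \ =\ \frac{a(R_{k,n})(e^{\epsilon m}-1)}{\epsilon} \ \leq\ \frac{a(R_{k,n})\, e^{\epsilon m}}{\epsilon},$$
and dividing through by $a(R_{k,n})$ yields $q_m(n) \leq e^{m\epsilon}/\epsilon$ uniformly in $m \geq 0$ for all $n \geq N_\epsilon$, as claimed.

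The only non-routine step is the verification that $R_{k,n} \to \infty$ (so that the eventual smallness of $a'$ can be applied uniformly in $m$); everything else is a one-line Gronwall-type estimate on the logarithmic derivative $p'/p = a'\circ F$. No further delicacy arises because the monotonicity of $F$ propagates the initial bound $R_{k,n} \geq z_\epsilon$ to the entire trajectory.
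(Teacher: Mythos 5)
Your argument is correct, and it is worth noting that the paper itself supplies no proof of this lemma: it is quoted verbatim from Lemma 5.2 of \cite{balkema:embrechts:2004}, so there is no internal proof to compare against. Your route --- writing $F(t)=\psiinv(\psi(R_{k,n})+t)$, observing $F'=a\circ F$ and hence $(\log (a\circ F))'=a'\circ F$, and then using the von Mises condition $a'(z)\to 0$ to get $a(F(t))\le a(R_{k,n})e^{\epsilon t}$ followed by one integration --- is exactly the standard self-neglecting-function argument behind the cited lemma, and every step checks out: $F$ is increasing, so $F(t)\ge R_{k,n}\ge z_\epsilon$ propagates along the whole trajectory, and the final division by $a(R_{k,n})=p(0)$ gives $q_m(n)\le (e^{\epsilon m}-1)/\epsilon\le e^{\epsilon m}/\epsilon$. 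The only step I would tighten is your parenthetical justification that $R_{k,n}\to\infty$: as written it ignores the factor $r_n^{d(k-1)}$ in \eqref{e:normalizing.light}, which could in principle compensate for $n^k\to\infty$; the clean argument is that $(R_{k,n})$ is assumed non-decreasing, so if it did not diverge it would converge to some finite $R_\infty>0$, forcing $n^kr_n^{d(k-1)}$ to converge to the positive constant $\bigl(a(R_\infty)R_\infty^{d-1}f(R_\infty e_1)^k\bigr)^{-1}$, i.e.\ the ``rapid decay'' of $r_n$ that the paper explicitly excludes (cf.\ the sentence following \eqref{e:normalizing.light} and Remark \ref{re:no.rapid.decay.heavy}); in any case $R_{k,n}\to\infty$ is a standing assumption used throughout the light-tail analysis, so this is a presentational rather than a substantive gap.
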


\begin{proof}[Proof of Theorem \ref{t:main.light}]{\color{white} .}
The proof is rather long, and so we break into two main units, one each for the two main cases, further dividing the proof of the first case into three sign-posted parts. Hopefully this will help the reader to navigate the next few pages.

\vskip0.1truein
\noindent \underline{\textit{Proof of statement $(i)$}} 

\vskip0.1truein
\noindent \underline{\textit{Part 1}:}
As argued in the proof of Theorem \ref{t:main.heavy}, we need only check \eqref{e:anti.cluster} and \eqref{e:component}. As for \eqref{e:component}, we need to show that 
\beq  \label{e:start.prob} 
\begin{pmatrix} n \\ k \end{pmatrix} \P  \bigl\{ \, g_n ( \X_k, \, \X_k \cup \mathcal{P}_n^{\prime} ) = 1\,, \ 
\Xsupnk \in K \  \bigr\} \to \nu_k(K)
\eeq
for all relatively compact $K$ in $E_k=\bigl((-\infty,\infty]^d\bigr)^k$ for which $\nu_k(\partial K)=0$. Without loss of generality, it suffices to  check the case $K=I_0 \times \dots \times I_{k-1}$, where $I_i = (a_i,b_i] \subset \bbr^d$ and $a_i$, $b_i$ are $d$-dimensional vectors such that $-\infty \prec a_i \preceq b_i \preceq \infty$. Define 
$$
H = S^{d-1} \cap \bigl\{ (z_1,\dots,z_d) \in \bbr^d: z_1 + \dots + z_d \geq 0 \bigr\},
$$
and  let $H^c = S^{d-1} \setminus H$. Let $\Theta_1 = X_1 / \|X_1\|$. Since $a(R_{k,n})^{-1} R_{k,n} S(X_1) \to \infty$ a.s. (in a componentwise sense), we have  that, for large enough $n$, $\Theta_1 \in H^c$ and $a(R_{k,n})^{-1} \bigl(X_1 - R_{k,n}S(X_1)\bigr) \succeq a_0 (\succ -\infty)$ do not occur simultaneously. 
Thus, the left-hand side in \eqref{e:start.prob} equals 
$$  
\begin{pmatrix} n \\ k \end{pmatrix} \P  \Bigl\{ \, g_n ( \X_k, \, \X_k \cup \mathcal{P}_n^{\prime} ) = 1\,, \ \Theta_1 \in H\,,   \
\Xsupnk \in \prod_{i=1}^k I_{i-1}\,   \Bigr\} + o(1)
$$
as $n \to \infty$. Letting $J_k$ denote the leading term in the above,  we can write 
 \begin{align*}
J_k &= \begin{pmatrix} n \\ k \end{pmatrix} \E  \biggl\{ h_n(\X_k) \, \one \bigl( \, \Xsupnk \in \prod_{i=1}^k I_{i-1}\,, \ \ \Theta_1 \in H \bigr) \\
&\quad \, \times \P  \biggl\{ G \bigl( \X_k, \, r_n \bigr) \text{ is an isolated component of } G \bigl( \X_k \cup \mathcal{P}_n^{\prime}, \, r_n \bigr) \, \Bigl| \, \X_k \biggr\} \biggr\}  \\
&=\begin{pmatrix} n \\ k \end{pmatrix} \int_{(\bbr^d)^k}h_n(x_1,\dots,x_k) \, \\
&\quad \, \times \one \Bigl( \, a(R_{k,n})^{-1} \bigl( x_i - R_{k,n}S(x_i) \bigr) \in I_{i-1}\,,  i=1,\dots,k, \ \ x_1 / \|x_1\| \in H \Bigr) \\
&\qquad\qquad \times \exp \bigl\{ -np(x_1,\dots,x_k; r_n) \bigr\}\, f(x_1)\dots f(x_k) d\bx,
\end{align*} 
where the definition of $p$ was given in \eqref{e:comp.func}.

The change of variables $x_1 \leftrightarrow x$ and $x_i \leftrightarrow x+r_ny_{i-1}$, $i=2,\dots,k$ yields
\begin{align*}
J_k = &\begin{pmatrix} n \\ k \end{pmatrix} r_n^{d(k-1)} \int_{\bbr^d} \one \Bigl(\, a(R_{k,n})^{-1} \bigl(x - R_{k,n} S(x)\bigr) \in I_0\,, \ x/\|x\| \in H \Bigr)\, f(x) \\     
&\quad\times \int_{(\bbr^d)^{k-1}} h(0,\by) \, \prod_{i=1}^{k-1} \, \one \Bigl(\, a(R_{k,n})^{-1} \bigl(x+r_ny_i - R_{k,n} S(x+r_ny_i)\bigr) \in I_i \bigr)  \\     
&\quad\qquad\times f(x+r_ny_i) \exp \bigl\{ -np(x, x+r_n\by; r_n) \bigr\} d\by dx.
\end{align*}
Further calculation by the polar coordinate transform $x \leftrightarrow (r,\theta)$ with $J(\theta) = |\partial x / \partial \theta|$ and the change of variable $\rho = a(R_{k,n})^{-1}(r-R_{k,n})$ gives
\begin{align*}
J_k = &\begin{pmatrix} n \\ k \end{pmatrix} r_n^{d(k-1)} a(R_{k,n}) R_{k,n}^{d-1} f(R_{k,n}e_1)^k \\
&\times \int_{(\bbr^d)^{k-1}} \int_{S^{d-1}} \int_0^{\infty}  \prod_{i=1}^6 L_i\, h(0,\by)\,  d\rho J(\theta) d\theta d\by\,,    
\end{align*}
where
\begin{align*}
L_1 &= \one \bigl( \, \rho \theta - a(R_{k,n})^{-1} R_{k,n} \bigl( S(\theta) - \theta \bigr) \in I_0, \ \, \theta \in H \, \bigr)\,,  \\     
L_2 &= \left( 1 + \frac{a(R_{k,n})}{R_{k,n}} \rho \right)^{d-1}, \\     
L_3 &= f(R_{k,n}e_1)^{-1} f\Bigl( \bigl( R_{k,n} + a(R_{k,n}) \rho \bigr) e_1 \Bigr)\,,  \\
L_4 &= \prod_{i=1}^{k-1} \, \one \Bigl( \, \rho \theta + \frac{r_n}{a(R_{k,n})} y_i \\
&\qquad\qquad\qquad \, - \frac{R_{k,n}}{a(R_{k,n})} \left[ S\Bigl( \bigl(R_{k,n}+a(R_{k,n})\rho \bigr)\theta + r_ny_i \Bigr) - \theta \right] \in I_i \, \Bigr)\,, \\
L_5 &= \prod_{i=1}^{k-1} f(R_{k,n}e_1)^{-1} f\Bigl( \|(R_{k,n}+a(R_{k,n})\rho)\theta + r_ny_i\|e_1 \Bigr)\,,  \\     
L_6 &= \exp \Bigl\{ -np\bigl((R_{k,n}+a(R_{k,n})\rho)\theta, (R_{k,n}+a(R_{k,n})\rho)\theta+r_n\by; r_n \bigr)\Bigr\}\,.
\end{align*}

\vskip0.1truein
\noindent \underline{\textit{Part 2}:} For the application of the dominated convergence theorem, one needs to compute the limit for each $L_i$, $i=1,\dots,6$, while also establishing finite upper bounds for each term. We shall  begin with the indicators $L_1$ and $L_4$, which are trivially bounded. The fact that $a(R_{k,n})^{-1} R_{k,n} \to \infty$ ensures that
\begin{align*}
L_1 \to \one ( \, \rho \theta \in I_0\,, \ \, \theta \succeq 0 )\,,  \ \ \ n \to \infty\,.
\end{align*}
Observe also that
$$
L_1 \leq \one ( \rho \geq -M^{\prime} ) \ \ \text{for some } M^{\prime} \geq 0\,.
$$
To see this in more detail, choose $M^{\prime} \geq 0$ such that $\min_{1 \leq j \leq d} a_0^{(j)} \geq -M^{\prime}$ (the superscript $(j)$ denotes `$j$th component` of a given vector). Fix $\rho$ and $\theta$ such that $L_1 = 1$. Then for all $j$ with $\theta^{(j)}>0$,
$$
\rho \geq \rho \theta^{(j)} = \rho \theta^{(j)} - a(R_{k,n})^{-1} R_{k,n} \bigl(S(\theta)^{(j)} -  \theta^{(j)} \bigr) \geq a_0^{(j)} \geq -M^{\prime}\,.
$$
Note that $\theta \in H$ guarantees that at least one component in $\theta$ must be positive. 

 In what follows, we prove the assertion when $M^{\prime} = 0$. The proof for a general $M^{\prime}$ is notationally more complicated, but essentially the same.

Before moving to $L_4$, note the following useful expansion, which will be  applied repeatedly in what follows: For each $i=1,\dots,d$, 
\begin{equation}  \label{e:expansion}
\Bigl|\Bigl|\bigl( R_{k,n} + a(R_{k,n})\rho \bigr)\theta + r_n y_i \Bigr|\Bigr| = R_{k,n} + a(R_{k,n}) \left( \rho + \frac{\langle \theta, y_i \rangle + \gamma_n(\rho, \theta, y_i)}{a(R_{k,n})/r_n} \right),
\end{equation}
so that $\gamma_n(\rho, \theta, y_i) \to 0$ uniformly in $\rho \geq 0$, $\theta \in S^{d-1}$, and $\|y_i\| \leq M$ ($M$ is determined in \eqref{e:close.enough}). 

Turning now to  $L_4$, we shall  rewrite the expression within the indicator as follows:
\begin{align}
\rho \theta + \frac{r_n}{a(R_{k,n})} \left( y_i + \frac{\alpha_n}{\beta_n} \right), \label{e:expression.L4} 
\end{align}
where 
\begin{multline*}
\alpha_n = \bigl( \langle \theta, y_i \rangle + \gamma_n(\rho, \theta, y_i) \bigr) \theta + \frac{R_{k,n}}{r_n} \left( 1 + \frac{a(R_{k,n})}{R_{k,n}} \rho \right) \theta \\
- m \left( \frac{R_{k,n}}{r_n} \left( 1 + \frac{a(R_{k,n})}{R_{k,n}} \rho \right) \theta + y_i \right)
\end{multline*}
with 
$$
m(x) = \bigl( |x^{(1)}|, \dots, |x^{(d)}| \bigr)\,,  \  \ \ x=(x^{(1)}, \dots, x^{(d)}) \in \bbr^d\,,
$$
and 
$$
\beta_n = 1 + \frac{a(R_{k,n})}{R_{k,n}} \left( \rho + \frac{\langle \theta, y_i \rangle + \gamma_n(\rho, \theta, y_i)}{a(R_{k,n})/r_n} \right).
$$
As seen above, $L_1 \to \one ( \rho \theta \in I_0, \ \, \theta \succeq 0 )$ as $n \to \infty$, so it is enough to discuss the convergence of \eqref{e:expression.L4} for every $\rho \geq 0$, $\theta \succ 0$, and $\|y_i\| \leq M$. Then, for large enough $n$,
$$
\frac{R_{k,n}}{r_n} \left( 1 + \frac{a(R_{k,n})}{R_{k,n}} \rho \right) \theta + y_i \succ 0.
$$
Thus, we have  that $\alpha_n \to \langle \theta, y_i \rangle \theta - y_i$ and $\beta_n \to 1$ as $n \to \infty$. Now we have 
$$
\rho \theta + \frac{r_n}{a(R_{k,n})} \left( y_i +  \frac{\alpha_n}{\beta_n} \right) \to \bigl( \rho + c^{-1} \langle \theta, y_i \rangle \bigr) \theta
$$
and in conclusion, for every $\theta \succ 0$, 
$$
L_4 \to \prod_{i=1}^{k-1} \, \one \Bigl( \bigl( \rho + c^{-1} \langle \theta, y_i \rangle \bigr) \theta \in I_i \Bigr)\,.
$$

Regarding $L_2$, it is clear that for every $\rho \geq 0$, $L_2 \to 1$ as $n \to \infty$ and it is also easy to check that $L_2 \leq 2 (\rho \vee 1)^{d-1}$.

As for $L_3$, we write 
$$
L_3 = L(R_{k,n})^{-1} L\bigl( R_{k,n} + a(R_{k,n}) \rho \bigr) \exp \Bigl\{ -\psi \bigl( R_{k,n} + a(R_{k,n})\rho \bigr) + \psi(R_{k,n}) \Bigr\}\,.
$$
An elementary calculation (e.g.\  p142 in \cite{embrechts:kluppelberg:mikosch:1997}) shows that 
\begin{equation}  \label{e:unif.conv.a}
\frac{a(R_{k,n})}{a\bigl( R_{k,n} + a(R_{k,n}) v \bigr)} \to 1
\end{equation}
uniformly on bounded $v$-sets. Namely $1/a$ is flat for $a$. Therefore, for every $\rho  \geq 0$, 
\begin{align*}
\exp \Bigl\{ -\psi \bigl( R_{k,n} + a(R_{k,n})\rho \bigr) + \psi(R_{k,n}) \Bigr\} \to e^{-\rho}, \ \ \ n \to \infty\,.
\end{align*}
Since $L$ is flat for $a$, it follows that $L_3 \to e^{-\rho}$ as $n \to \infty$ for every $\rho \geq 0$. For the upper bound of $L_3$ on $\{ \rho \geq 0 \}$, we apply Lemma \ref{l:upper.bound.light}. Choosing $\epsilon \in \bigl(0, (d+\gamma k)^{-1} \bigr)$ and recalling that $\psi$ is non-decreasing,
\begin{align*}
&\exp \Bigl\{ -\psi \bigl( R_{k,n} + a(R_{k,n})\rho \bigr) + \psi(R_{k,n}) \Bigr\} \, \one (\rho \geq 0 )  \\     
&= \sum_{m=0}^{\infty} \, \one \bigl( q_m(n) \leq \rho < q_{m+1}(n) \bigr)\, \exp \Bigl\{ -\psi \bigl( R_{k,n} + a(R_{k,n})\rho \bigr) + \psi(R_{k,n}) \Bigr\}  \\     
&\leq \sum_{m=0}^{\infty} \, \one \bigl( 0 \leq \rho \leq \epsilon^{-1} e^{(m+1)\epsilon} \bigr)\, e^{-m}
\end{align*}
for all $n \geq N_{\epsilon}$. \\
On the other hand, using the bound in \eqref{e:poly.upper}, on $\{ \rho \geq 0 \}$, we have, for sufficiently large $n$,
\begin{align*}
L(&R_{k,n})^{-1} L \bigl( R_{k,n} + a(R_{k,n}) \rho \bigr)  \leq C \left( 1 + \frac{a(R_{k,n})}{R_{k,n}} \rho \right)^{\gamma} \leq 2C (\rho \vee 1)^{\gamma}.
\end{align*}
Multiplying these bounds together, we have 
\begin{equation*}
L_3 \one (\rho \geq 0) \leq 2 C (\rho \vee 1)^{\gamma} \sum_{m=0}^{\infty} \, \one \bigl( 0 \leq  \rho \leq \epsilon^{-1} e^{(m+1)\epsilon} \bigr)\, e^{-m}.
\end{equation*}

Next, we turn to $L_5$. First, denote
\begin{equation}  \label{e:consise.taylor}
\xi_n(\rho, \theta, y) = \frac{\langle \theta, y_i \rangle + \gamma_n(\rho, \theta, y)}{a(R_{k,n})/r_n}\,. 
\end{equation}
Since $c = \lim_{n \to \infty}a(R_{k,n})/r_n$ is strictly positive, 
\begin{equation}  \label{e:no.blow.up}
A = \sup_{\substack{n \geq 1, \ \rho \geq 0, \\[2pt] \theta \in S^{d-1}, \ \|y\| \leq M}} \bigl| \xi_n(\rho, \theta, y) \bigr| < \infty\,.
\end{equation}
Using the expansion \eqref{e:expansion}, we can write 
\begin{align*}
L_5 = &\prod_{i=1}^{k-1} L(R_{k,n})^{-1} L \Bigl( R_{k,n} + a(R_{k,n}) \bigl( \rho +  \xi_n(\rho,\theta,y_i) \bigr) \Bigr) \\
&\times \exp \left\{ -\int_0^{\rho + \xi_n(\rho,\theta,y_i)} \frac{a(R_{k,n})}{a \bigl( R_{k,n} + a(R_{k,n} )r \bigr)}\, dr \right\}.
\end{align*}
Due to the uniform convergence \eqref{e:unif.conv.a} and \eqref{e:no.blow.up}, for every $\rho \geq 0$, $\theta \in S^{d-1}$, and $\|y_i\| \leq M$, 
$$
\int_0^{\rho + \xi_n(\rho,\theta,y_i)} \frac{a(R_{k,n})}{a \bigl( R_{k,n} + a(R_{k,n}) r \bigr)}\, dr \to \rho + c^{-1} \langle \theta, y_i \rangle\,, \ \ \ n \to \infty
$$
and  
$$
L(R_{k,n})^{-1} L \Bigl( R_{k,n} + a(R_{k,n}) \bigl( \rho +  \xi_n(\rho,\theta,y_i) \bigr) \Bigr) \to 1\,, \ \ \ n \to \infty\,.
$$
We thus conclude that
$$
L_5 \to \exp \bigl\{ -  (k-1) \rho - c^{-1} \sum_{i=1}^{k-1} \langle \theta, y_i \rangle  \bigr\}
$$
for every $\rho \geq 0$, $\theta \in S^{d-1}$, and $\|y_i\| \leq M$, $i=1,\dots,k-1$. 

To provide  an appropriate upper bound for $L_5$ on $\{ \rho \geq 0 \}$, note that, for large enough $n$, 
\begin{align*}
&\prod_{i=1}^{k-1} \exp \left\{ -\int_0^{\rho + \xi_n(\rho,\theta,y_i)} \frac{a(R_{k,n})}{a \bigl( R_{k,n} + a(R_{k,n} )r \bigr)}\, dr \right\} \, \one ( \rho \geq 0 )  \\     
&\leq \exp\, \biggl\{ \, \sum_{i=1}^{k-1} \int_{-A}^0 \frac{a(R_{k,n})}{a \bigl( R_{k,n} + a(R_{k,n})r \bigr)}\, dr\, \one \bigl(  - A \leq \rho + \xi_n(\rho,\theta,y_i) \leq 0 \bigr) \\     
&\quad \, -\sum_{i=1}^{k-1} \int_0^{\rho + \xi_n(\rho,\theta,y_i)} \frac{a(R_{k,n})}{a \bigl( R_{k,n} + a(R_{k,n})r \bigr)}\, dr \, \one \bigl( \rho + \xi_n(\rho,\theta,y_i)>0 \bigr) \biggr\}  \\     
&\leq e^{2 A (k-1)}\,. 
\end{align*}
It follows from \eqref{e:flat} and \eqref{e:poly.upper} that there exists $C_1 \geq 1$ such that, on $\{ \rho \geq 0 \}$,
\begin{align*}
&L(R_{k,n})^{-1} L \Bigl( R_{k,n} + a(R_{k,n}) \bigl( \rho + \xi_n(\rho,\theta,y_i) \bigr) \Bigr)  \\     
&= L(R_{k,n})^{-1} L \Bigl( R_{k,n} + a(R_{k,n}) \bigl( \rho + \xi_n(\rho,\theta,y_i) \bigr) \Bigr) \, \one \bigl( \rho + \xi_n(\rho,\theta,y_i) \geq - A \bigr)  \\     
&\leq 2 + L(R_{k,n})^{-1} L \Bigl( R_{k,n} + a(R_{k,n}) \bigl( \rho + \xi_n(\rho,\theta,y_i) \bigr) \Bigr) \, \one \bigl( \rho + \xi_n(\rho,\theta,y_i) > 0  \bigr)  \\     
&\leq 2 + C \left( 1 + \frac{a(R_{k,n})}{R_{k,n}} \bigl( \rho + \xi_n(\rho,\theta,y_i) \bigr) \right)^{\gamma}\,    \\     
&\leq 2 + 2C \Bigl( 1 \vee  \bigl( \rho + \xi_n (\rho,\theta,y_i) \bigr) \Bigr)^{\gamma}   \\     
&\leq C_1^{(k-1)^{-1}} (\rho \vee 1)^{\gamma}
\end{align*}
for sufficiently large $n$.
This in turn implies that
$$
\prod_{i=1}^{k-1} L(R_{k,n})^{-1} L \Bigl( R_{k,n} + a(R_{k,n}) \bigl( \rho + \xi_n(\rho,\theta,y_i) \bigr) \Bigr) \leq C_1\, (\rho \vee 1)^{\gamma(k-1)}
$$
on $\{ \rho \geq 0 \}$, and further, 
$$
L_5 \leq C_1\,  e^{2 A (k-1)}\, (\rho \vee 1)^{\gamma (k-1)}
$$
holds on $\{ \rho \geq 0 \}$.

Finally, we turn to  $L_6$.
\begin{align*}
-\log L_6 &= np\bigl((R_{k,n}+a(R_{k,n})\rho)\theta, (R_{k,n}+a(R_{k,n})\rho)\theta+r_n\by; r_n \bigr)  \\     
&= n r_n^d f(R_{k,n}e_1) \int_{B(0;2) \cup \bigcup_{i=1}^{k-1} B(y_i; 2)} f(R_{k,n}e_1)^{-1} \\
&\quad \, \times f\biggl( \Bigl( R_{k,n}+a(R_{k,n})\bigl(\rho + \xi_n(\rho,\theta,z) \bigr) \Bigr) e_1 \biggr) dz\,. 
\end{align*}
As argued in the derivation of the bound for $L_5$, the supremum of integrand over $n$ can be bounded by a constant multiple of $(\rho \vee 1)^{\gamma}$. On the other hand, \eqref{e:normalizing.light} guarantees $nr_n^d f(R_{k,n}e_1) \to 0$ as $n \to \infty$ and hence, $L_6 \to 1$ as $n \to \infty$. 

From the argument thus far, it follows that for every $\rho \geq 0$, $\theta \in S^{d-1}$, and $\|y_i\| \leq M$, $i=1,\dots,d$,
\begin{align*}
\prod_{i=1}^6 L_i \to &e^{-  k \rho - c^{-1} \sum_{i=1}^{k-1} \langle \theta, y_i \rangle } \\
&\times \one \bigl( \rho \theta \in I_0\,, \ \, \theta \succeq 0\,, \ \, (\rho + c^{-1} \langle \theta, y_i \rangle) \theta \in I_i, \, i=1,\dots,k-1 \bigr)\,.
\end{align*}
To finish the argument, it remains to check the $L^1$-integrability on $\{ \rho \geq 0 \}$ of the upper bound for $\prod_{i=1}^6 L_i$. As we have seen so far,  
\begin{align*}
&\prod_{i=1}^6 L_i \, \one ( \rho \geq 0 ) \\
&\leq 4 C C_1\, e^{2 A (k-1)}\, (\rho \vee 1)^{d-1+\gamma k} \sum_{m=0}^{\infty} \one \bigl( 0 \leq \rho \leq \epsilon^{-1} e^{(m+1)\epsilon} \bigr) e^{-m}
\end{align*}
for sufficiently large $n$. Indeed,
\begin{align*}
\int_{0}^{\infty} &(\rho \vee 1)^{d-1+\gamma k} \sum_{m=0}^{\infty} \one \bigl( 0 \leq \rho \leq \epsilon^{-1} e^{(m+1)\epsilon} \bigr) e^{-m} d \rho  \\         
&\leq \frac{e^{(d+\gamma k)\epsilon}}{\epsilon^{d+\gamma k}}\, \sum_{m=0}^{\infty} e^{- \bigl[ 1 - (d+\gamma k)\epsilon \bigr]m} 
\end{align*}
so that the right-hand side is finite because we took $0 < \epsilon < (d+\gamma k)^{-1}$. Applying the dominated convergence theorem as well as \eqref{e:normalizing.light}, it turns out that $J_k \to \nu_k(K)$, $n \to \infty$ as required.

\vskip0.1truein
\noindent \underline{\textit{Part 3}:} Next, we shall  prove that \eqref{e:anti.cluster} is satisfied. For $l=1,\dots,k-1$, and any compact set $K \subset E_{2k-l} = \bigl( (-\infty,\infty]^d\bigr)^{2k-l}$, there exists $B \geq 0$ such that 
$$
K \subset \bigl\{ (x_1,\dots,x_{2k-l}) \in (\bbr^d)^{2k-l}: x_i \succeq -B \one\,, \ i=1,\dots, 2k-l \, \bigr\}\,,
$$
where $\one$ is a $d$-dimensional vector with all entries $1$. Then, the probability in \eqref{e:anti.cluster} can be bounded by  
\begin{align*}
&n^{2k-l} \P  \bigl\{ h_n(\X_k) = 1\,, \ h_n(X_1,\dots,X_l,X_{k+1},\dots,X_{2k-l})=1\,, \  \X_{2k-l}^{(n)} \succeq -B \one  \bigr\} \\     
&=n^{2k-l} \int_{(\bbr^d)^{2k-l}} h_n(x_1,\dots,x_k) h_n(x_1,\dots,x_l,x_{k+1},\dots,x_{2k-l})  \\     
&\quad \, \times \one \Bigl(  x_i - R_{k,n} S(x_i) \succeq - a(R_{k,n}) B \one, \ i=1,\dots,2k-l\,, \ \, x_1/\|x_1\| \in H  \Bigr)  \\     
&\quad \, \times f(x_1)\dots f(x_{2k-l}) d\bx + o(1)\,, \ \ \ n \to \infty\,.
\end{align*}
The equality above follows from the fact that $X_1/\|X_1\| \in H^c$ and $ X_1 - R_{k,n}S(X_1) \geq -a(R_{k,n}) B \one$ do not occur simultaneously. \\
By precisely the same change of variables and polar coordinate transform as in the proof of \eqref{e:component}, the leading term of the last line above equals 
\begin{align*}
n^{2k-l} &r_n^{d(2k-l-1)} a(R_{k,n}) R_{k,n}^{d-1} f(R_{k,n}e_1)^{2k-l} \\
&\times \int_{(\bbr^d)^{2k-l-1}} \int_{S^{d-1}} \int_0^{\infty} \one \Bigl( \rho \theta - \frac{R_{k,n}}{a(R_{k,n})} \bigl( S(\theta) - \theta \bigr) \succeq -B\one\,, \ \theta \in H \Bigr) \\     
&\times \left( 1 + \frac{a(R_{k,n})}{R_{k,n}} \rho \right)^{d-1} f(R_{k,n}e_1)^{-1} f \Bigl( \bigl( R_{k,n} + a(R_{k,n}) \rho \bigr) e_1 \Bigr)  \\   
&\times \prod_{i=1}^{2k-l-1} \, \one \biggl( \, \rho \theta + \frac{r_n}{a(R_{k,n})} y_i \\
&\quad \, - \frac{R_{k,n}}{a(R_{k,n})} \Bigl[ S \Bigl( \bigl(R_{k,n} + a(R_{k,n}) \rho \bigr) \theta + r_n y_i \Bigr) - \theta \Bigr] \succeq - B \one \, \biggr) \\
&\times f(R_{k,n}e_1)^{-1} f \Bigl( \bigl| \bigl| \bigl( R_{k,n} + a(R_{k,n}) \rho \bigr) \theta + r_n y_i \bigr| \bigr| e_1 \Bigr)  \\     
&\times h(0,y_1,\dots,y_{k-1})\, h(0,y_1,\dots,y_{l-1},y_k,\dots,y_{2k-l-1})\, d\rho J(\theta) d\theta d\by\,.
\end{align*}
The asymptotic order of the above expression is 
$$
\mathcal{O} \bigl( n^{2k-l} r_n^{d(2k-l-1)} a(R_{k,n}) R_{k,n}^{d-1} f(R_{k,n}e_1)^{2k-l} \bigr)\,,
$$
which vanishes as $n \to \infty$ for every $l=1,\dots,k-1$, and so the proof of $(i)$ is complete. 
\vspace{10pt}

\noindent \underline{\textit{Proof of statement $(ii)$}}

\vskip0.1truein
\noindent
We need only show that, as $n \to \infty$,
$$
\regN_n^{(k)}(f) = \sum_{\bi \in \IPk} h_n(\mathcal{X}_{\bi})\, f \bigl( \Xsupn_\bi, \ \bi/|\mathcal{P}_n| \bigr) \stackrel{p}{\to} 0
$$
for every continuous non-negative function $f\:E_k \times L_k \to \bbr_+$ with compact support. Note that the support of $f$ is contained in 
$$
\bigl\{ (x_1,\dots,x_k) \in (\bbr^d)^k: \, x_i \succeq -B_1 \one\,, \ i=1,\dots,k\, \bigr\} \times L_k
$$
for some $B_1 \geq 0$. Therefore,
$$
\regN_n^{(k)}(f) \leq \|f\|_{\infty} \sum_{\bi \in \IPk} \, \one \bigl( h_n(\mathcal{X}_{\bi}) = 1\,, \ \Xsupn_\bi \succeq -B_1 \one \bigr).
$$
Observe that $X_{i_j} - R_{k,n}S(X_{i_j}) \succeq -a(R_{k,n}) B_1 \one$ implies 
$$
\|X_{i_j}\| - R_{k,n} \geq -a(R_{k,n})B_2
$$ 
for some $B_2 \geq B_1$. Hence, 
\begin{align*}
\regN_n^{(k)}(f) \leq &\|f\|_{\infty} \sum_{\bi \in \IPk} \, \one \bigl( h_n(\mathcal{X}_{\bi}) = 1\,, \\ 
&\|X_{i_j}\| - R_{k,n}  \geq -a(R_{k,n}) B_2\,, \ j=1,\dots,k\,  \bigr)\,.
\end{align*}
In view of Lemma \ref{l:palm.penrose} in the Appendix, what now needs to be verified is that
$$
n^k \P  \bigl\{ h_n(\X_k) = 1\,, \ \|X_i\| - R_n \geq -a(R_{k,n}) B_2\,, \ i=1,\dots,k\, \bigr\} \to 0\,.
$$
Once again, applying the same kind of change of variables and polar coordinate transform, together with \eqref{e:expansion},
\begin{align*}
&n^k \P  \bigl\{ h_n(\X_k) = 1\,, \ \|X_i\| - R_n \geq -a(R_{k,n}) B_2\,, \ i=1,\dots,k\, \bigr\} \\     
&= n^k r_n^{d(k-1)} a(R_{k,n}) R_{k,n}^{d-1} f(R_{k,n}e_1)^k \int_{(\bbr^d)^{k-1}} \int_{S^{d-1}} \int_{\rho \geq -B_2} \left( 1 + \frac{a(R_{k,n})}{R_{k,n}} \rho \right)^{d-1}  \\     
&\quad \times  f(R_{k,n}e_1)^{-1}\, f\Bigl( \bigl( R_{k,n} + a(R_{k,n}) \rho \bigr) e_1 \Bigr) \prod_{i=1}^{k-1} \, \one \bigl( \rho + \xi_n(\rho,\theta,y_i) \geq -B_2 \bigr) \\     
&\quad \times  f(R_{k,n}e_1)^{-1}\, f\biggl( \Bigl( R_{k,n}+a(R_{k,n})\bigl(\rho + \xi_n(\rho,\theta,y_i) \bigr) \Bigr) e_1 \biggr) \, h(0,\by)\, d\rho J(\theta)d\theta d\by  ,
\end{align*}
where the definition of $\xi_n(\rho,\theta,y_i)$ is given by \eqref{e:consise.taylor}.

As argued before, 
$$
\left( 1 + \frac{a(R_{k,n})}{R_{k,n}} \rho \right)^{d-1} \leq 2(\rho \vee 1)^{d-1}
$$
and, on $\{ \rho \geq -B_2 \}$,
$$
L(R_{k,n})^{-1} L \bigl( R_{k,n} + a(R_{k,n})\rho \bigr) \leq 2C (\rho \vee 1)^{\gamma}\,.
$$
Furthermore, since $a$ is eventually non-increasing, 
\begin{align*}
\exp \Bigl\{ -\psi \bigl( R_{k,n} + a(R_{k,n})\rho \bigr) + \psi(R_{k,n}) \Bigr\} \leq e^{-\rho}\,.
\end{align*}
As argued before, it follows from \eqref{e:flat} and \eqref{e:poly.upper} that there exists $C_2>0$ such that on $\{ \rho \geq -B_2\,, \ \rho + \xi_n(\rho,\theta,y_i) \geq -B_2  \}$,
$$
\prod_{i=1}^{k-1} L(R_{k,n})^{-1} L \Bigl( R_{k,n} + a(R_{k,n}) \bigl( \rho + \xi_n(\rho,\theta,y_i) \bigr) \Bigr) \leq C_2 (\rho \vee 1)^{\gamma(k-1)}.
$$
Since $a$ is eventually non-increasing,
\begin{align*}
\prod_{i=1}^{k-1} &\exp \Bigl\{ -\psi \Bigl( R_{k,n} + a(R_{k,n}) \bigl(\rho + \xi_n(\rho,\theta,y_i)\bigr) \Bigr) + \psi(R_{k,n}) \Bigr\}  \\     
&\leq \prod_{i=1}^{k-1} \exp \Bigl\{- \bigl( \rho + \xi_n(\rho,\theta,y_i) \bigr)\Bigr\}\,.
\end{align*}
Now, we have
\begin{align*}
&n^k \P  \bigl\{ h_n(\X_k) = 1\,, \ \|X_i\| - R_n \geq -a(R_{k,n}) B_2\,, \ i=1,\dots,k\, \bigr\}  \\     
&\leq 4 C C_2 \int_{(\bbr^d)^{k-1}} \int_{S^{d-1}} \int_{\rho \geq -B_2} (\rho \vee 1)^{d-1+\gamma k} e^{-\rho} \prod_{i=1}^{k-1} \exp \Bigl\{- \bigl( \rho + \xi_n(\rho,\theta,y_i) \bigr)\Bigr\}  \\     
&\quad \, \times \one \bigl( \rho + \xi_n(\rho,\theta,y_i) \geq -B_2 \bigr)\, h(0,\by) d\rho J(\theta) d\theta d\by\,.
\end{align*}
If we can show that
\begin{equation}  \label{e:application.dct}
\exp \Bigl\{- \bigl( \rho + \xi_n(\rho,\theta,y_i) \bigr)\Bigr\}  \, \one \bigl( \rho + \xi_n(\rho,\theta,y_i) \geq -B_2 \bigr) \to 0\,, \ \ \ n \to \infty
\end{equation}
for every $\rho \geq -B_2$, $\theta \in S^{d-1}$, and $\|y_i\| \leq M$, $i=1,\dots,d$, then the dominated convergence theorem finishes the proof. First, in the case of $\langle \theta, y_i \rangle < 0$, 
$$
\rho + \xi_n(\rho,\theta,y_i) \to -\infty\,,  \ \ \ n \to \infty
$$
and hence, as $n \to \infty$,
\begin{align*}
\exp &\Bigl\{- \bigl( \rho + \xi_n(\rho,\theta,y_i) \bigr)\Bigr\}  \, \one \bigl( \rho + \xi_n(\rho,\theta,y_i) \geq -B_2 \bigr) \\
&\leq e^{B_2} \, \one \bigl( \rho + \xi_n(\rho,\theta,y_i) \geq -B_2 \bigr) \to 0\,.
\end{align*}
Second, if $\langle \theta, y_i \rangle > 0$, then $\exp \bigl\{ -\bigl(\rho + \xi_n(\rho,\theta,y_i)\bigr) \bigr\} \to 0$. So in either case, \eqref{e:application.dct} is established.
\end{proof}

\noindent {\bf Acknowledgement.} The authors are  grateful to  two  referees and an  associate editor, all of whose comments led to a substantial improvement in the presentation of the paper.

%\bibiliographystyle{plain}
\bibliographystyle{imsart-nameyear}

\bibliography{Takashi_ref}

\begin{thebibliography}{29}
% BibTex style file: imsart-nameyear.bst, 2013-01-28
% Default style options (sort=1,type=nameyear).
% Used options (sort=1,type=nameyear).

\bibitem[\protect\citeauthoryear{Adler, Bobrowski and
  Weinberger}{2014}]{adler:bobrowski:weinberger:2014}
\begin{barticle}[author]
\bauthor{\bsnm{Adler},~\bfnm{R.~J.}\binits{R.~J.}},
  \bauthor{\bsnm{Bobrowski},~\bfnm{O.}\binits{O.}} \AND
  \bauthor{\bsnm{Weinberger},~\bfnm{S.}\binits{S.}}
(\byear{2014}).
\btitle{Crackle: The homology of noise}.
\bjournal{Discrete \& Computational Geometry}
\bvolume{52}
\bpages{680-704}.
\end{barticle}
\endbibitem

\bibitem[\protect\citeauthoryear{Arratia, Goldstein and
  Gordon}{1989}]{arratia:goldstein:gordon:1989}
\begin{barticle}[author]
\bauthor{\bsnm{Arratia},~\bfnm{R.}\binits{R.}},
  \bauthor{\bsnm{Goldstein},~\bfnm{L.}\binits{L.}} \AND
  \bauthor{\bsnm{Gordon},~\bfnm{L.}\binits{L.}}
(\byear{1989}).
\btitle{Two moments suffice for Poisson approximations: the Chen-Stein method}.
\bjournal{The Annals of Probability}
\bvolume{17}
\bpages{9-25}.
\end{barticle}
\endbibitem

\bibitem[\protect\citeauthoryear{Balkema and
  Embrechts}{2004}]{balkema:embrechts:2004}
\begin{bunpublished}[author]
\bauthor{\bsnm{Balkema},~\bfnm{G.}\binits{G.}} \AND
  \bauthor{\bsnm{Embrechts},~\bfnm{P.}\binits{P.}}
(\byear{2004}).
\btitle{Multivariate excess distributions}.
\bnote{{www.math.ethz.ch/~embrecht/ftp/guuspe08Jun04.pdf}}.
\end{bunpublished}
\endbibitem

\bibitem[\protect\citeauthoryear{Balkema and
  Embrechts}{2007}]{balkema:embrechts:2007}
\begin{bbook}[author]
\bauthor{\bsnm{Balkema},~\bfnm{G.}\binits{G.}} \AND
  \bauthor{\bsnm{Embrechts},~\bfnm{P.}\binits{P.}}
(\byear{2007}).
\btitle{High Risk Scenarios and Extremes: A Geometric Approach}.
\bpublisher{European Mathematical Society}.
\end{bbook}
\endbibitem

\bibitem[\protect\citeauthoryear{Balkema, Embrechts and
  Nolde}{2010}]{balkema:embrechts:nolde:2010}
\begin{barticle}[author]
\bauthor{\bsnm{Balkema},~\bfnm{G.}\binits{G.}},
  \bauthor{\bsnm{Embrechts},~\bfnm{P.}\binits{P.}} \AND
  \bauthor{\bsnm{Nolde},~\bfnm{N.}\binits{N.}}
(\byear{2010}).
\btitle{Meta densities and the shape of their sample clouds}.
\bjournal{Journal of Multivariate Analysis}
\bvolume{101}
\bpages{1738-1754}.
\end{barticle}
\endbibitem

\bibitem[\protect\citeauthoryear{Balkema, Embrechts and
  Nolde}{2013}]{balkema:embrechts:nolde:2013}
\begin{barticle}[author]
\bauthor{\bsnm{Balkema},~\bfnm{G.}\binits{G.}},
  \bauthor{\bsnm{Embrechts},~\bfnm{P.}\binits{P.}} \AND
  \bauthor{\bsnm{Nolde},~\bfnm{N.}\binits{N.}}
(\byear{2013}).
\btitle{The shape of asymptotic dependence}.
\bjournal{Springer Proceedings in Mathematics \& Statistics, Special volume
  "Prokhorov and Contemporary Probability Theory"}
\bvolume{33}
\bpages{43-67}.
\end{barticle}
\endbibitem

\bibitem[\protect\citeauthoryear{Bingham, Goldie and
  Teugels}{1987}]{bingham:goldie:teugels:1987}
\begin{bbook}[author]
\bauthor{\bsnm{Bingham},~\bfnm{N.}\binits{N.}},
  \bauthor{\bsnm{Goldie},~\bfnm{C.}\binits{C.}} \AND
  \bauthor{\bsnm{Teugels},~\bfnm{J.}\binits{J.}}
(\byear{1987}).
\btitle{Regular Variation}.
\bpublisher{Cambridge University Press}, \baddress{Cambridge}.
\end{bbook}
\endbibitem

\bibitem[\protect\citeauthoryear{Bobrowski and
  Adler}{2014}]{bobrowski:adler:2014}
\begin{barticle}[author]
\bauthor{\bsnm{Bobrowski},~\bfnm{O.}\binits{O.}} \AND
  \bauthor{\bsnm{Adler},~\bfnm{R.~J.}\binits{R.~J.}}
(\byear{2014}).
\btitle{Distance functions, critical points, and topology for some random
  complexes}.
\bjournal{Homology, Homotopy and Applications}
\bvolume{16}
\bpages{311-344}.
\end{barticle}
\endbibitem

\bibitem[\protect\citeauthoryear{Borsuk}{1948}]{borsuk:1948}
\begin{barticle}[author]
\bauthor{\bsnm{Borsuk},~\bfnm{K.}\binits{K.}}
(\byear{1948}).
\btitle{On the imbedding of systems of compacta in simplicial complexes}.
\bjournal{Fundamenta Mathematicae}
\bvolume{35}
\bpages{217-234}.
\end{barticle}
\endbibitem

\bibitem[\protect\citeauthoryear{Dabrowski
  et~al.}{2002}]{dabrowski:dehling:mikosch:sharipov:2002}
\begin{barticle}[author]
\bauthor{\bsnm{Dabrowski},~\bfnm{A.~R.}\binits{A.~R.}},
  \bauthor{\bsnm{Dehling},~\bfnm{H.~G.}\binits{H.~G.}},
  \bauthor{\bsnm{Mikosch},~\bfnm{T.}\binits{T.}} \AND
  \bauthor{\bsnm{Sharipov},~\bfnm{O.}\binits{O.}}
(\byear{2002}).
\btitle{Poisson limits for $U$-statistics}.
\bjournal{Stochastic Processes and their Applications}
\bvolume{99}
\bpages{137-157}.
\end{barticle}
\endbibitem

\bibitem[\protect\citeauthoryear{Daley and
  Vere-Jones}{2003}]{daley:verejones:2003}
\begin{bbook}[author]
\bauthor{\bsnm{Daley},~\bfnm{D.~J.}\binits{D.~J.}} \AND
  \bauthor{\bsnm{Vere-Jones},~\bfnm{D.}\binits{D.}}
(\byear{2003}).
\btitle{An Introduction to the Theory of Point Processes}.
\bpublisher{Springer}, \baddress{New York}.
\end{bbook}
\endbibitem

\bibitem[\protect\citeauthoryear{Davis and Hsing}{1995}]{davis:hsing:1995}
\begin{barticle}[author]
\bauthor{\bsnm{Davis},~\bfnm{R.~A.}\binits{R.~A.}} \AND
  \bauthor{\bsnm{Hsing},~\bfnm{T.}\binits{T.}}
(\byear{1995}).
\btitle{Point process and partial sum convergence for weakly dependent random
  variables with infinite variance}.
\bjournal{The Annals of Probability}
\bvolume{23}
\bpages{879-917}.
\end{barticle}
\endbibitem

\bibitem[\protect\citeauthoryear{Davis and Resnick}{1985}]{davis:resnick:1985}
\begin{barticle}[author]
\bauthor{\bsnm{Davis},~\bfnm{R.}\binits{R.}} \AND
  \bauthor{\bsnm{Resnick},~\bfnm{S.}\binits{S.}}
(\byear{1985}).
\btitle{Limit theory for moving averages of random variables with regularly
  varying tail probabilities}.
\bjournal{The Annals of Probability}
\bvolume{13}
\bpages{179--195}.
\end{barticle}
\endbibitem

\bibitem[\protect\citeauthoryear{Embrechts, Kl\"uppelberg and
  Mikosch}{1997}]{embrechts:kluppelberg:mikosch:1997}
\begin{bbook}[author]
\bauthor{\bsnm{Embrechts},~\bfnm{P.}\binits{P.}},
  \bauthor{\bsnm{Kl\"uppelberg},~\bfnm{C.}\binits{C.}} \AND
  \bauthor{\bsnm{Mikosch},~\bfnm{T.}\binits{T.}}
(\byear{1997}).
\btitle{Modelling Extremal Events: for Insurance and Finance}.
\bpublisher{Springer}, \baddress{New York}.
\end{bbook}
\endbibitem

\bibitem[\protect\citeauthoryear{Hatcher}{2002}]{hatcher:2002}
\begin{bbook}[author]
\bauthor{\bsnm{Hatcher},~\bfnm{A.}\binits{A.}}
(\byear{2002}).
\btitle{Algebraic Topology}.
\bpublisher{Cambridge University Press}, \baddress{Cambridge}.
\end{bbook}
\endbibitem

\bibitem[\protect\citeauthoryear{Kahle}{2011}]{kahle:2011}
\begin{barticle}[author]
\bauthor{\bsnm{Kahle},~\bfnm{M.}\binits{M.}}
(\byear{2011}).
\btitle{Random geometric complexes}.
\bjournal{Discrete \& Computational Geometry}
\bvolume{45}
\bpages{553-573}.
\end{barticle}
\endbibitem

\bibitem[\protect\citeauthoryear{Kahle and Meckes}{2013}]{kahle:meckes:2013}
\begin{barticle}[author]
\bauthor{\bsnm{Kahle},~\bfnm{M.}\binits{M.}} \AND
  \bauthor{\bsnm{Meckes},~\bfnm{E.}\binits{E.}}
(\byear{2013}).
\btitle{Limit theorems for Betti numbers of random simplicial complexes}.
\bjournal{Homology, Homotopy and Applications}
\bvolume{15}
\bpages{343-374}.
\end{barticle}
\endbibitem

\bibitem[\protect\citeauthoryear{Niyogi, Smale and
  Weinberger}{2008}]{niyogi:smale:weinberger:2008}
\begin{barticle}[author]
\bauthor{\bsnm{Niyogi},~\bfnm{P.}\binits{P.}},
  \bauthor{\bsnm{Smale},~\bfnm{S.}\binits{S.}} \AND
  \bauthor{\bsnm{Weinberger},~\bfnm{S.}\binits{S.}}
(\byear{2008}).
\btitle{Finding the homology of submanifolds with high confidence from random
  samples}.
\bjournal{Discrete \& Computational Geometry}
\bvolume{39}
\bpages{419-441}.
\end{barticle}
\endbibitem

\bibitem[\protect\citeauthoryear{Niyogi, Smale and
  Weinberger}{2011}]{niyogi:smale:weinberger:2011}
\begin{barticle}[author]
\bauthor{\bsnm{Niyogi},~\bfnm{P.}\binits{P.}},
  \bauthor{\bsnm{Smale},~\bfnm{S.}\binits{S.}} \AND
  \bauthor{\bsnm{Weinberger},~\bfnm{S.}\binits{S.}}
(\byear{2011}).
\btitle{A topological view of unsupervised learning from noisy data}.
\bjournal{SIAM Journal on Computing}
\bvolume{40}
\bpages{646-663}.
\end{barticle}
\endbibitem

\bibitem[\protect\citeauthoryear{Owada and
  Samorodnitsky}{2015}]{owada:samorodnitsky:2015b}
\begin{barticle}[author]
\bauthor{\bsnm{Owada},~\bfnm{T.}\binits{T.}} \AND
  \bauthor{\bsnm{Samorodnitsky},~\bfnm{G.}\binits{G.}}
(\byear{2015}).
\btitle{Maxima of long memory stationary symmetric $\alpha$-stable processes,
  and self-similar processes with stationary max-increments}.
\bjournal{Bernoulli}
\bvolume{21}
\bpages{1575-1599}.
\end{barticle}
\endbibitem

\bibitem[\protect\citeauthoryear{Penrose}{2003}]{penrose:2003}
\begin{bbook}[author]
\bauthor{\bsnm{Penrose},~\bfnm{M.}\binits{M.}}
(\byear{2003}).
\btitle{Random Geometric Graphs, Oxford Studies in Probability 5}.
\bpublisher{Oxford University Press}, \baddress{Oxford}.
\end{bbook}
\endbibitem

\bibitem[\protect\citeauthoryear{Resnick}{1987}]{resnick:1987}
\begin{bbook}[author]
\bauthor{\bsnm{Resnick},~\bfnm{S.}\binits{S.}}
(\byear{1987}).
\btitle{Extreme Values, Regular Variation and Point Processes}.
\bpublisher{Springer-Verlag}, \baddress{New York}.
\end{bbook}
\endbibitem

\bibitem[\protect\citeauthoryear{Resnick}{2007}]{resnick:2007}
\begin{bbook}[author]
\bauthor{\bsnm{Resnick},~\bfnm{S.}\binits{S.}}
(\byear{2007}).
\btitle{Heavy-Tail Phenomena: Probabilistic and Statistical Modeling}.
\bpublisher{Springer}, \baddress{New York}.
\end{bbook}
\endbibitem

\bibitem[\protect\citeauthoryear{{\rm de Haan} and
  Ferreira}{2006}]{dehaan:ferreira:2006}
\begin{bbook}[author]
\bauthor{\bsnm{{\rm de Haan}},~\bfnm{L.}\binits{L.}} \AND
  \bauthor{\bsnm{Ferreira},~\bfnm{A.}\binits{A.}}
(\byear{2006}).
\btitle{Extreme Value Theory: An Introduction}.
\bpublisher{Springer}, \baddress{New York}.
\end{bbook}
\endbibitem

\bibitem[\protect\citeauthoryear{Samorodnitsky and
  Taqqu}{1994}]{samorodnitsky:taqqu:1994}
\begin{bbook}[author]
\bauthor{\bsnm{Samorodnitsky},~\bfnm{G.}\binits{G.}} \AND
  \bauthor{\bsnm{Taqqu},~\bfnm{M.}\binits{M.}}
(\byear{1994}).
\btitle{Stable Non-Gaussian Random Processes}.
\bpublisher{Chapman and Hall}, \baddress{New York}.
\end{bbook}
\endbibitem

\bibitem[\protect\citeauthoryear{Schulte and
  Th\"{a}le}{2012}]{schulte:thale:2012}
\begin{barticle}[author]
\bauthor{\bsnm{Schulte},~\bfnm{M.}\binits{M.}} \AND
  \bauthor{\bsnm{Th\"{a}le},~\bfnm{C.}\binits{C.}}
(\byear{2012}).
\btitle{The scaling limit of Poisson-driven order statistics with applications
  in geometric probability}.
\bjournal{Stochastic Processes and their Applications}
\bvolume{122}
\bpages{4096-4120}.
\end{barticle}
\endbibitem

\bibitem[\protect\citeauthoryear{Vick}{1994}]{vick:1994}
\begin{bbook}[author]
\bauthor{\bsnm{Vick},~\bfnm{J.~W.}\binits{J.~W.}}
(\byear{1994}).
\btitle{Homology Theory: An Introduction to Algebraic Topology, {\rm 2nd
  edition}}.
\bpublisher{Springer}, \baddress{New York}.
\end{bbook}
\endbibitem

\bibitem[\protect\citeauthoryear{Yogeshwaran and
  Adler}{2015}]{yogeshwaran:adler:2015}
\begin{barticle}[author]
\bauthor{\bsnm{Yogeshwaran},~\bfnm{D.}\binits{D.}} \AND
  \bauthor{\bsnm{Adler},~\bfnm{R.~J.}\binits{R.~J.}}
(\byear{2015}).
\btitle{On the topology of random complexes built over stationary point
  processes}.
\bjournal{The Annals of Applied Probability}
\bvolume{25}
\bpages{3338-3380}.
\end{barticle}
\endbibitem

\bibitem[\protect\citeauthoryear{Yogeshwaran, Subag and
  Adler}{2016}]{yogeshwaran:subag:adler:2014}
\begin{barticle}[author]
\bauthor{\bsnm{Yogeshwaran},~\bfnm{D.}\binits{D.}},
  \bauthor{\bsnm{Subag},~\bfnm{E.}\binits{E.}} \AND
  \bauthor{\bsnm{Adler},~\bfnm{R.~J.}\binits{R.~J.}}
(\byear{2016}).
\btitle{Random geometric complexes in the thermodynamic regime}.
\bjournal{Probability Theory and Related Fields}.
\bnote{{In press, arXiv:1403.1164}}.
\end{barticle}
\endbibitem

\end{thebibliography}

\end{document}